\newcommand{\cal}{\mathcal}
\newcommand{\newsection}[1]
{\section{#1}\setcounter{theorem}{0} \setcounter{equation}{0}
\par\noindent}
\newtheorem{theorem}{Theorem}
\newtheorem{lemma}[theorem]{Lemma}
\newtheorem{corr}[theorem]{Corollary}
\newtheorem{prop}[theorem]{Proposition}
\newtheorem{proposition}[theorem]{Proposition}
\newtheorem{deff}[theorem]{Definition}
\newtheorem{remark}[theorem]{Remark}
\newcommand{\C}{{\mathbb C}}
\newcommand{\N}{{\mathbb N}}
\newcommand{\supp}{\text{supp }}
\newcommand{\R}{{\mathbb R}}
\newcommand{\Z}{{\mathbb Z}}
\newcommand{\la}{\langle}
\newcommand{\ra}{\rangle}
\renewcommand{\div}{{\text{div}}}
 \newcommand{\e}{\epsilon}
\newcommand{\dX}{{X}}
\newcommand{\tX}{{{\tilde{X}}}}
\newcommand{\tQ}{{{\tilde{Q}}}}
\renewcommand{\epsilon}{\varepsilon}
\newenvironment{com}{\begin{quotation}{\leftmargin .25in\rightmargin .25in}\sffamily \footnotesize $\clubsuit$}
               {$\spadesuit$\end{quotation}\par\bigskip}
\begin{document}

\title[Strichartz and smoothing estimates for rough Schr\"odinger equations]
{Strichartz estimates and local smoothing estimates for
asymptotically flat Schr\"odinger equations
}

\thanks{The second author was supported by the NSF through a MSPRF,
  and the other two authors were partially supported by the NSF
  through grants DMS0354539 and DMS 0301122}


\author{Jeremy Marzuola}
\author{Jason Metcalfe}
\author{Daniel Tataru}
\address{Department of Mathematics, University of California, Berkeley, CA  94720-3840}

\begin{abstract}
In this article we study global-in-time Strichartz estimates for the
Schr\"o\-dinger evolution corresponding to long-range perturbations
of the Euclidean Laplacian.  This is a natural continuation of
 a recent article \cite{T} of the third author, where it is proved 
that local smoothing estimates imply Strichartz estimates.

By \cite{T} the local smoothing estimates are known to hold for small
perturbations of the Laplacian. Here we consider the case of large
perturbations in three increasingly favorable scenarios: (i) without
non-trapping assumptions we prove estimates outside a compact set
modulo a lower order spatially localized error term, (ii) with
non-trapping assumptions we prove global estimates modulo a lower order
spatially localized error term, and (iii) for time independent
operators with no resonance or eigenvalue at the bottom of the
spectrum we prove global estimates for the projection onto the
continuous spectrum.

\end{abstract}

\includeversion{Q}

\includeversion{QQ}
\newcommand{\bc}{\begin{QQ}\begin{com}}
\newcommand{\ec}{\end{com}\end{QQ}}

\maketitle

\newsection{Introduction}



This article is a natural continuation of the third author's work in
\cite{T}, which studies the connection between long-time Strichartz
estimates and local smoothing estimates for Schr\"odinger equations
with $C^2$, asymptotically flat coefficients.

Given a time dependent second order elliptic operator in $\R^n$
\[
A(t,x,D)=D_i a^{ij}(t,x)D_j + b^i(t,x) D_i + D_i b^i(t,x) + c(t,x)
\]
we consider the dispersive properties of solutions to the
Schr\"odinger evolution
\begin{equation}\label{main.equation}
Pu:=(D_t+A(t,x,D))u = f,\quad u(0)=u_0.
\end{equation}

Two of the most stable ways of measuring dispersion are the local
smoothing estimates and the Strichartz estimates.  The local smoothing
estimates give $L^2$ time integrability for the spatially localized
energy, with a half-derivative gain. To state them  we use a local
smoothing space $X$ which will be defined shortly, and its dual $X'$,
\begin{equation}
\| u\|_{X \cap L^\infty_t L^2_x} \lesssim \|u_0\|_{L^2} + \|f\|_{X' + L^1_tL^2_x}
\label{LSE}\end{equation}
where in a first approximation one may set
\[
\| u\|_{X} \sim \| \langle x \rangle^{-\frac12-} |D|^{\frac12} u\|_{L^2_{t,x}}.
\]

The Strichartz estimates on the other hand measure the space-time 
integrability of solutions and have the form
\begin{equation}
\| u\|_{L^{p_1}_t L^{q_1}_x} \lesssim \|u_0\|_{L^2} + \|f\|_{ L^{p'_2}_t L^{q'_2}_x}  
\label{SE} \end{equation}
where the indices $(p_1,q_1)$ and $(p_2,q_2)$ satisfy the relation
\[
\frac{2}{p}+\frac{n}{q}= \frac{n}{2},\qquad 2\le p,q\le \infty
\]
and $(p,q)\neq (2,\infty)$ if $n=2$.  Any pair $(p,q)$ satisfying
these requirements will be called a Strichartz pair.\footnote{For simplicity of exposition, we shall
not directly address the $q=\infty$ endpoint estimate.  This permits us in the sequel to use Littlewood-Paley
theory.  See \cite{KeelTao} for the corresponding endpoint argument in the flat case.}

The local smoothing estimates have been long known to hold in the flat
case $A = -\Delta$ and for certain small perturbations. 
For operators with variable coefficients, local in
time smoothing estimates were first established in \cite{CKS} and
\cite{Doi}.  Global in time estimates on the other hand are
considerably more difficult to obtain and are known only in some very
special cases.  See, e.g., \cite{RT} for time independent, non-trapping, smooth,
compactly supported, though not necessarily small, perturbations of the Laplacian.
 
There are also some known results which show global-in-time smoothing
estimates in the presence of certain trapped rays.  Here, the
estimates involve a different spatial weight and a loss of regularity
due to the trapping.  
See \cite{Hans}, \cite{SSS} and the references
therein.

The Strichartz estimates hold globally in the flat case $A = -\Delta$.
Local-in-time Stri\-chartz estimates for variable coefficient operators
have also been established in \cite{ST}, \cite{HTW}, and \cite{RZ}
provided, amongst other things, that the coefficients are non-trapping.
We also refer the interested reader to the simplified approaches of
\cite{KT} and \cite{Tnotes}. Again, global in time estimates are
more difficult and have been obtained only recently in
\cite{RT} (time independent, non-trapping, smooth, compactly supported perturbations of the
Laplacian) respectively 
\cite{T} (small, $C^2$ long range perturbations of the
Laplacian). 

The above references would be incomplete without mentioning the
vast body of work on dispersive and Strichartz estimates for
lower order perturbations of the Laplacian. For this we refer the reader 
to some of the more recent papers \cite{EGS, EGS2} and the references 
therein.

The third author's article \cite{T} is one of the starting points of
this work. The main result in \cite{T} is to construct a global in
time outgoing parametrix for the equation \eqref{main.equation}  for
$C^2$ long range perturbations of the Laplacian.  This construction
uses the FBI transform, an approach that is reminiscent of the earlier
works \cite{T.I, T.II, T.FP} for the wave equation.  See, also,
\cite{Tnotes} for a survey of these techniques and the closely related
work \cite{Smith} which is based instead on a wave packet
decomposition.

The errors associated to the parametrix are handled using 
the local smoothing estimates. Consequently one is led to the 
second result of \cite{T}, which roughly asserts that
\[
\text{ Local Smoothing Estimates} \implies \text{Strichartz Estimates}.
\]

Local smoothing estimates are also proved in \cite{T}, but only for
small long range perturbations of the Laplacian.  The aim of the
present work is to consider large long range perturbations of the
Laplacian. 

A difficulty one encounters is the possible presence of
trapped rays, i.e. geodesics which are confined to a compact spatial
region. This brings us to our second starting point, namely Bouclet and
Tzvetkov's work \cite{BT}. For smooth, time independent, long range
perturbations of the Laplacian, they prove that local in time
Strichartz estimates hold in the exterior of a sufficiently large
ball, in other words that the loss due to trapping is also confined to
a bounded region.  Another aim of the present work is to provide an analogous
result which is global in time and holds for $C^2$ time-dependent coefficients.

\subsection{Estimates outside a ball}
We begin with our assumptions on the coefficients. Consider a dyadic
spatial decomposition of $\R^n$ into the sets
\[
D_0=\{|x|\le 2\},\quad D_j=\{2^j\le |x|\le 2^{j+1}\}, \quad
j=1,2,\dots
\]
and for $j \geq 0$ set
\[
A_j = \R\times D_j,\quad j\ge 0, \qquad A_{<j}=\R\times \{|x|\le 2^j\}
= \bigcup_{l<j} A_l.
\]
Our weak asymptotic flatness condition has the form
\begin{equation}
\label{coeff}
\sum_{j\in \N}\sup_{A_j} 
\Bigl[\la x\ra^2\Bigl(|\partial_x^2 a(t,x)|+|\partial_t a(t,x)|\Bigr)
+ \la x\ra|\partial_x a(t,x)|+|a(t,x)-I_n|\Bigr]\le \kappa <\infty
\end{equation}
and for the lower order terms we have a related condition,
\begin{equation}\label{coeffb}
\sum_{j\in \N} \sup_{A_j} \la x\ra
|b(t,x)|\le \kappa
\end{equation}
\begin{equation}
\label{coeffc}
\begin{split}
&\left\{\begin{array}{l}
\sup \la x\ra^2 (|c(t,x)|+|\div\  b(t,x)|) \le \kappa
\cr\cr   \displaystyle \limsup_{|x| \to \infty}\
 \la x\ra^2 (|c(t,x)|+|\div\ b(t,x)|) <  \epsilon \ll 1 \end{array} 
\right. \qquad \qquad n \neq 2
\\ 
&\left\{\begin{array}{l}\sup \la x\ra^2 (\ln(2+|x|^2))^2
    (|c(t,x)|+|\div\  b(t,x)|) \le \kappa,  
\cr \cr 
\displaystyle \limsup_{|x| \to \infty}\
 \la x\ra^2 (\ln \la x\ra )^2 (|c(t,x)|+|\div\  b(t,x)|) <  \epsilon \ll 1
\end{array}\right.
 \qquad n=2.
\end{split}
\end{equation}

Here $\epsilon$ is a fixed sufficiently small parameter.  For any
$\kappa$, \eqref{coeff} restricts the trapped rays to finitely many of
the regions $A_j$.  If $\kappa$ is sufficiently small, which we do not
assume, then it is known that trapped rays do not exist.  Notice that
we may choose $M=M(\varepsilon)$ sufficiently large so that
\begin{equation}\label{coeff.M}
\sum_{j\ge M} \sup_{A_j}\Bigl[\la x\ra^2 \Bigl(|\partial_x^2 a(t,x)|+|\partial_t a(t,x)|\Bigr)
+\la x\ra|\partial_x a(t,x)|+|a(t,x)-I_n|\Bigr]\le \varepsilon\end{equation}
and
\begin{equation}\label{coeffb.M}
\sum_{j \geq M} \sup_{A_j} \la x\ra
|b(t,x)|\le \epsilon 
\end{equation}
\begin{equation}
\label{coeffc.M}
\begin{split}
  & \sup_{A_{\geq M}} \la x\ra^2 (|c(t,x)|+|\div\ b(t,x)|) \le
  \epsilon, \qquad n \neq 2
  \\
  &\sup_{A_{\geq M}} \la x\ra^2 (\ln \la x\ra)^2 (|c(t,x)|+|\div\
  b(t,x)|) \le \epsilon, \qquad n = 2.
\end{split}
\end{equation}

To describe the local smoothing space $\dX$, we  use a dyadic partition of unity of frequency
\[
1=\sum_{k=-\infty}^\infty S_k(D).
\]
The functions at frequency $2^k$ are measured using the norms
\[
\|u\|_{X_k}= \|u\|_{L^2_{t,x}(A_{<0})} + \sup_{j\ge 0} \|\la x\ra^{-1/2} u\|_{L^2_{t,x}(A_j)},
\quad k>0
\]
\[
\|u\|_{X_k}=2^{\frac{k}2} \|u\|_{L^2_{t,x}(A_{<-k})} +  \sup_{j\ge -k} \|(|x|+2^{-k})^{-1/2} u\|_{L^2_{t,x}(A_j)},
\quad k\le 0.
\]
The local smoothing space $\dX$ is the completion of the Schwartz space
with respect to the norm
\[
\|u\|_{\dX}^2 = \sum_{k=-\infty}^\infty 2^k \|S_k u\|^2_{X_k}.
\]
Its dual $\dX'$ has norm 
\[
\|f\|_{\dX'}^2=\sum_{k=-\infty}^\infty 2^{-k} \|S_k f\|^2_{X_k'}.
\]

In dimension $n \geq 3$ the space $\dX$ is a space of distributions,
and we have the Hardy type inequality
\begin{equation}
\| \la x\ra^{-1} u\|_{L^2_{t,x}} \lesssim \| u\|_{\dX}.
\label{Hardy}\end{equation}
On the other hand in dimensions $n=1,2$, the space $\dX$ is a space of
distributions modulo constants, and we have the BMO type inequality
\begin{equation}
\sum_{j\ge 0}\| \la x\ra^{-1} (u-u_{D_j})\|^2_{L^2_{t,x}(A_j)} \lesssim \| u\|^2_{\dX}
\label{Hardylow}\end{equation}
where $u_{D_j}$ represents the (time dependent) average 
of $u$ in $D_j$.
 At the same time $\dX'$ contains only
functions with integral zero.  We refer the reader to \cite{T} for
more details.

In \cite{T} the case of a small perturbation
of the Laplacian is considered, and it is proved that
\begin{theorem}~\cite{T}. Assume that either

(i) $n \geq 3$ and \eqref{coeff}, \eqref{coeffb},\eqref{coeffc} hold
with a sufficiently small $\kappa$ or 

(ii) $n = 1,2$, $b^i=0$, $c=0$ and   \eqref{coeff} holds
with a sufficiently small $\kappa$.

Then the  local smoothing estimate 
\begin{equation}
  \label{lsesmall}
\| u\|_{\dX \cap L^\infty_t L^2_x} \lesssim \|u_0\|_{L^2}
+ \|f\|_{\dX'+L^1_tL^2_x} 
\end{equation}
holds for all solutions $u$ to \eqref{main.equation}.
\end{theorem}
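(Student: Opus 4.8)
The plan is to prove \eqref{lsesmall} by a frequency-localized positive commutator (Morawetz multiplier) argument, exploiting that $A$ is formally self-adjoint (so that real coefficients make the symmetrized form $D_ia^{ij}D_j+b^iD_i+D_ib^i+c$ self-adjoint) and that $\kappa$ is \emph{globally} small, hence there are no trapped geodesics and the bicharacteristic flow of $a$ is a uniformly small perturbation of the straight-line flow. First I would reduce to the homogeneous problem and the $\dX\cap L^\infty_tL^2_x$ bound, reinstating the $f$-contribution at the end by duality between $\dX$ and $\dX'$ together with the elementary energy estimate $\|u\|_{L^\infty_tL^2_x}\lesssim\|u_0\|_{L^2}+\|f\|_{L^1_tL^2_x}$. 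Next, perform a Littlewood--Paley decomposition $u=\sum_kS_ku$ and set $u_k:=S_ku$, so that $(D_t+A)u_k=g_k$ with $g_k:=S_kf+[A,S_k]u$. Because of \eqref{coeff}, the commutator $[A,S_k]u$ is one order lower than $Au$ and carries a factor $O(\kappa)$ coming from the tails $\langle x\rangle^{-1}\partial_xa+\langle x\rangle^{-2}(\partial_x^2a,\partial_ta)$; schematically it is bounded at frequency $2^k$ in $\dX'$ by $\kappa$ times the full $\dX$-norm of $u$, so once the dyadic estimates are summed this term is absorbed into the left-hand side using smallness of $\kappa$. This reduces matters to a uniform-in-$k$ estimate for a single dyadic piece.

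For the dyadic piece I would run the energy identity: for a bounded self-adjoint operator $Q_k$,
\[
\frac{d}{dt}\langle Q_ku_k,u_k\rangle = i\langle [Q_k,A]u_k,u_k\rangle + \langle (\partial_tQ_k)u_k,u_k\rangle + 2\,\text{Im}\,\langle Q_ku_k,g_k\rangle,
\]
integrate in $t$, and use that $i[Q_k,A]$ is self-adjoint. The task is to choose $Q_k$ — a zeroth order pseudodifferential multiplier localized to $|\xi|\sim 2^k$ — so that $i[Q_k,A]$ is nonnegative modulo $O(\kappa)$ errors and dominates $2^k(|x|+2^{-k})^{-1}$ microlocalized near $|\xi|\sim 2^k$, i.e. reproduces $2^k$ times the square of the $X_k$-density. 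The natural candidate is a Morawetz-type multiplier built from a \emph{bounded}, monotone function of $x\cdot\xi/|\xi|$ measured at the spatial scale $\max(|x|,2^{-k})$: for the flat Laplacian the symbol $x\cdot\xi/|\xi|$ increases along bicharacteristics at the rate $|\xi|\bigl(|x|^{-1}-(x\cdot\xi)^2|x|^{-3}|\xi|^{-2}\bigr)$, which is exactly the angular part of the desired weight, while a lower-order correction to $Q_k$ supplies the radial and low-frequency contributions; the $\sup_j$ (rather than $\sum_j$) structure of $\|\cdot\|_{X_k}$ is precisely what forces $Q_k$ to be built from a bounded monotone symbol, so that $\langle Q_ku_k,u_k\rangle\big|_0^T$ is controlled by $\|u_k\|_{L^\infty_tL^2_x}^2$. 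Since the perturbed flow is $O(\kappa)$-close to the straight-line flow globally, the same symbol remains monotone along it up to an $O(\kappa)$ error; $\partial_tQ_k$ is harmless because $Q_k$ may be taken time-independent (its $x$-dependence is only through $|x|$ and $x\cdot\xi$), and the time dependence of $a$ enters $[Q_k,A]$ only through $\partial_ta=O(\kappa\langle x\rangle^{-2})$, hence is again $O(\kappa)$.

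In dimension $n\geq 3$ the lower-order terms are dealt with via the Hardy inequality \eqref{Hardy}: the pairing of the $b^iD_i+D_ib^i$ and $c$ parts of $A$ against $Q_ku_k$ is bounded, using \eqref{coeffb} and \eqref{coeffc}, by $\kappa\bigl(\|\langle x\rangle^{-1}u\|_{L^2_{t,x}}^2+\||D|^{1/2}\langle x\rangle^{-1/2}u\|_{L^2_{t,x}}^2\bigr)\lesssim\kappa\|u\|_{\dX}^2$, again absorbable. In dimensions $n=1,2$ there is no Hardy inequality, only the BMO-type bound \eqref{Hardylow}, which is exactly why the theorem takes $b^i=0$, $c=0$ there: with $A=D_ia^{ij}D_j$ one chooses $Q_k$ to annihilate constants (a genuine first-order operator sandwiched between frequency cutoffs), so $\langle Q_ku_k,u_k\rangle$ and all error pairings make sense modulo constants and \eqref{Hardylow} replaces \eqref{Hardy}. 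Summing the dyadic estimates over $k$ with the weights $2^{\pm k}$, absorbing the $O(\kappa)$ errors for $\kappa$ small, and restoring the inhomogeneous term completes the argument. The main obstacle I expect is the multiplier construction itself: matching the commutator positivity to the precise $X_k$-norm — producing the $\sup_j$/Morawetz structure uniformly across all frequencies and across the transition between the regions $|x|\lesssim 2^{-k}$ and $|x|\gtrsim 2^{-k}$ — while keeping every error strictly lower order so that smallness of $\kappa$ closes the estimate; the low-frequency regime $k\to-\infty$ in $n=1,2$, where the logarithmic weights in \eqref{coeffc} and the modulo-constants framework are needed, is the most delicate point.
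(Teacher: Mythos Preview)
This theorem is cited from \cite{T} and the present paper does not give its own proof of it; however, the machinery the paper develops in Section~3 for the harder large-$\kappa$ exterior estimates (Lemmas~\ref{lemma.low.freq} and~\ref{lemma.high.freq}, Proposition~\ref{Qprop}) is precisely the frequency-localized positive commutator argument you describe, specialized to the easier global small-$\kappa$ setting where no cutoff $\rho$ is needed. Your outline is correct in its essentials and matches that approach.

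A few points where the paper's implementation sharpens or differs from your sketch. First, the multipliers are not general zeroth-order pseudodifferential operators but explicit differential ones: $Q_k=\delta\bigl(Dx\,\phi(2^k\delta|x|)+\phi(2^k\delta|x|)\,xD\bigr)$ at low frequencies, and a variant with $a^{ij}_{(k)}$ inserted at high frequencies; the commutator $i[A_{(k)},Q_k]$ is then computed by hand (formula~\eqref{C}) rather than via symbol calculus. Second, the $\sup_j$ in the $X_k$ norm is not obtained from a single $Q_k$: instead one introduces the family of Hilbert norms $X_{k,\alpha}$ indexed by slowly varying sequences $(\alpha_m)$, proves the commutator bound for each $\alpha$, and recovers $X_k$ by taking the supremum over the family --- this is how the ``bounded monotone symbol'' idea is made precise. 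Third, the commutator $[A,S_k]$ is not estimated directly; rather one passes to the paradifferentially mollified operator $A_{(k)}=D_iS_{<k-4}a^{ij}D_j$ via Proposition~\ref{lemma.A.to.Ak}, which cleanly separates the $O(\kappa)$ error from the frequency-localized commutator computation. Finally, your diagnosis of the $n=1,2$ restriction is right in spirit but the actual mechanism is that $\dX$ is only a space of distributions modulo constants there, so one must check that the assembled multiplier $\sum_k S_kQ_kS_k$ is bounded on $\dX$ (or $\tX$) as a whole; in the paper's large-$\kappa$ setting this forces the rather delicate modification $\tilde Q_\sigma$ of Section~3.2, though for small $\kappa$ the issue is milder.
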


As one can see, the assumptions are more restrictive in low
dimensions.  This is related to the spectral structure of the operator
$A$, precisely to the presence of a resonance at zero. This is the case
if $A = -\Delta$ or, more generally, if $b^i=0$ and $c=0$. However
the zero resonance is unstable with respect to lower order perturbations.
To account for non-resonant situations,  it is convenient to 
introduce a stronger norm which removes the quotient structure,
\[
\begin{split}
\|u\|_{\tX}^2 = &\  \| \la x\ra^{-1} u\|_{L^2_{t,x}}^2 + 
 \sum_{k=-\infty}^\infty 2^k \|S_k u\|^2_{X_k}, \qquad n \neq 2
\\
\|u\|_{\tX}^2 = &\  \| \la x\ra^{-1} (\ln (2+|x|))^{-1} u\|_{L^2_{t,x}}^2 + 
 \sum_{k=-\infty}^\infty 2^k \|S_k u\|^2_{X_k}, \qquad n = 2.
\end{split}
\]
Its dual is 
\[
\tX' = \dX' + \la x \ra L^2_{t,x}, \quad n \neq 2,\qquad  \tX' = \dX' + \la x
  \ra (\ln(2+|x|)) L^2_{t,x}, \quad n = 2.
\]
Due to the Hardy inequality above, if $n \geq 3$ we have $\tX = \dX$. 
On the other hand in low dimension the $\tX$
 norm adds some local
square integrability to the $\dX$ norm. Precisely, we have 
\begin{lemma}
Let $n=1,2$. Then
\begin{equation}
\| u\|_{\tX} \lesssim \|u\|_{\dX} + \|u\|_{L^2_{t,x}(\{|x| \leq 1\})}.
\end{equation}
\label{txdx}
\end{lemma}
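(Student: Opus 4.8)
The plan is to isolate the single term by which $\|u\|_{\tX}$ exceeds $\|u\|_{\dX}$ and to control it by combining the Hardy/BMO inequality \eqref{Hardylow} with the hypothesis on $\{|x|\le 1\}$. Write $\omega=\la x\ra^{-1}$ for $n=1$ and $\omega=\la x\ra^{-1}(\ln(2+|x|))^{-1}$ for $n=2$, so that $\|u\|_{\tX}^2=\|\omega u\|_{L^2_{t,x}}^2+\|u\|_{\dX}^2$; the two arithmetic facts I will use are that $\omega\in L^2(\R^n)$ when $n\le 2$, and that the numbers $w_j:=\int_{D_j}\omega^2\,dx$ are summable in $j$ (indeed $w_j\sim 2^{-j}$ for $n=1$ and $w_j\sim(1+j)^{-2}$ for $n=2$). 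Thus it suffices to prove $\|\omega u\|_{L^2_{t,x}}\lesssim\|u\|_{\dX}+\|u\|_{L^2_{t,x}(\{|x|\le 1\})}$.

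First I would peel off a spatial constant. Fix $\chi\in C_c^\infty$ with $\int\chi=1$ and $\supp\chi\subset\{|x|\le 1\}$, set $u_\chi(t)=\la u(t,\cdot),\chi\ra$ and $v=u-u_\chi$. Then $\|\omega u_\chi\|_{L^2_{t,x}}=\|u_\chi\|_{L^2_t}\,\|\omega\|_{L^2_x}\lesssim\|u\|_{L^2_{t,x}(\{|x|\le 1\})}$ by Cauchy--Schwarz; moreover $\|v\|_{\dX}=\|u\|_{\dX}$ since the $S_k$ annihilate the spatial constant $u_\chi(t)$, and $\la v(t,\cdot),\chi\ra=0$ for a.e.\ $t$. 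So the task reduces to showing $\|\omega v\|_{L^2_{t,x}}\lesssim\|v\|_{\dX}$ for $v$ with $\la v(t,\cdot),\chi\ra=0$. On each $A_j$ split $v=(v-v_{D_j})+v_{D_j}$, where $v_{D_j}(t)$ is the average of $v(t,\cdot)$ over $D_j$. Since $\omega$ is comparable to a constant on each $D_j$, the first piece contributes $\sum_j\|\omega(v-v_{D_j})\|_{L^2(A_j)}^2\lesssim\sum_j\|\la x\ra^{-1}(v-v_{D_j})\|_{L^2(A_j)}^2\lesssim\|v\|_{\dX}^2$ by \eqref{Hardylow}, while the second leaves $\sum_j w_j\|v_{D_j}\|_{L^2_t}^2$. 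I would telescope $v_{D_j}=v_{D_0}+\sum_{l=1}^j(v_{D_l}-v_{D_{l-1}})$: the $v_{D_0}$ term is $\lesssim\|v\|_{\dX}$ since $\la v,\chi\ra=0$ lets one write $v_{D_0}(t)=\la v(t,\cdot)-v_{D_0}(t),\,|D_0|^{-1}\1_{D_0}-\chi\ra$, a localized oscillation controlled by \eqref{Hardylow}; the telescoped tail is summed by the discrete Hardy inequality appropriate to the weight $w_j$ (the classical Hardy inequality for $n=2$, its geometric analogue for $n=1$). This reduces everything to the shell-to-shell bound $\sum_{l\ge1}w_l\,\|v_{D_l}-v_{D_{l-1}}\|_{L^2_t}^2\lesssim\|v\|_{\dX}^2$, and since $|v_{D_l}(t)-v_{D_{l-1}}(t)|\lesssim|D_l|^{-1/2}\|v(t,\cdot)-v_{\hat D_l}(t)\|_{L^2(\hat D_l)}$ on the doubled annulus $\hat D_l=D_{l-1}\cup D_l$, where $\la x\ra\sim 2^l$, this in turn follows from the \emph{fattened} Hardy inequality $\sum_l\|\la x\ra^{-1}(v-v_{\hat D_l})\|_{L^2(\hat D_l)}^2\lesssim\|v\|_{\dX}^2$.

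This last inequality is the main obstacle. It is not a formal consequence of \eqref{Hardylow} as literally stated: a function equal to one constant on $D_{l-1}$ and a different constant on $D_l$ has zero oscillation on every $D_j$ yet is far from constant on $\hat D_l$, so the transition scales $|x|\sim 2^l$ must be seen directly. Here I would either quote the Hardy/oscillation estimate of \cite{T} in the form valid for annuli straddling the dyadic radii (its proof there is not tied to the exact choice of annuli), or reprove it from the definition of $\dX$: the high-frequency part $\sum_{k>0}S_kv$ satisfies $\|\la x\ra^{-1}\sum_{k>0}S_kv\|_{L^2}\lesssim\|v\|_{\dX}$ directly by decomposing $\la x\ra^{-1}$ dyadically, while for each low-frequency piece $S_kv$ with $k\le 0$ the first term of the $X_k$-norm controls $\|S_kv\|_{L^2(\{|x|\le 2^{-k}\})}$ outright and Bernstein's inequality makes the oscillation of $S_kv$ over a ball of radius $2^l\le 2^{-k}$ smaller by a factor $2^{l+k}$, after which the $k$- and $l$-sums converge.

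The case $n=2$ is the delicate one throughout: $\la x\ra^{-1}(\ln(2+|x|))^{-1}$ is only borderline square-integrable on $\R^2$, so the logarithm must be tracked at every step (in $\omega$, in the weights $w_j$, and in the discrete Hardy inequality), and this borderline behaviour is exactly the analytic manifestation of the zero-resonance of $-\Delta$ in two dimensions. In dimension one no logarithm is needed, the weights $w_j$ are geometric, and the whole argument has room to spare.
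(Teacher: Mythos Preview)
Your approach is correct but takes a genuinely different and more circuitous route than the paper. The paper decomposes $u=u^{in}+u^{out}$ directly in frequency: for each low-frequency piece $S_k u$ with $k<0$ it subtracts $T_kS_ku:=(S_ku)(t,0)\,\psi_k(x)$, so that $u^{out}=\sum_{k\ge0}S_ku+\sum_{k<0}(1-T_k)S_ku$ vanishes at the origin frequency-by-frequency and obeys $\|\la x\ra^{-1}u^{out}\|_{L^2}\lesssim\|u\|_{\dX}$ via an improved $X_k$ bound, while $u^{in}=\sum_{k<0}T_kS_ku$ lands in $\dot H^1$ with $\|u^{in}\|_{\dot H^1}\lesssim\|u\|_{\dX}$; the elementary Hardy inequalities $\|\omega v\|_{L^2}\lesssim\|v\|_{L^2(B(0,1))}+\|v\|_{\dot H^1}$ (each a one-line integration by parts) then finish the job. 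You instead subtract a single spatial constant, telescope dyadic averages in physical space, and reduce via a discrete Hardy inequality to the fattened BMO bound $\sum_l\|\la x\ra^{-1}(v-v_{\hat D_l})\|_{L^2(\hat D_l)}^2\lesssim\|v\|_{\dX}^2$. One small slip: for $n=2$ the classical discrete Hardy inequality yields $\sum_j(1+j)^{-2}|\sum_{l\le j}a_l|^2\lesssim\sum_l|a_l|^2$ \emph{without} the weight $w_l$ on the right, so your ``shell-to-shell bound'' should read $\sum_l\|v_{D_l}-v_{D_{l-1}}\|_{L^2_t}^2$ there; this still collapses to the same fattened Hardy inequality, so nothing breaks.

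The catch is that the fattened Hardy inequality --- which you rightly flag as the main obstacle --- is not a soft consequence of \eqref{Hardylow} and must itself be proved by exactly the frequency-by-frequency analysis (high frequencies direct, low frequencies via Bernstein gaining a factor $2^{l+k}$) that the paper performs for $u^{out}$. So the physical-space telescoping does not avoid the hard step; it only postpones it. The paper's route is shorter and has the bonus that the $u^{in}/u^{out}$ splitting and the $\dot H^1$ control on $u^{in}$ are reused verbatim later in the article (Case~2 of Proposition~\ref{lemma.lower.order} and the low-dimensional modification of Proposition~\ref{Qprop}).
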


The first goal of this article is to show, without any trapping
assumption, that loss-less (with respect to regularity), global-in-time local smoothing and
Strichartz estimates hold exterior to a sufficiently large ball,
modulo a localized error term.  It is hoped that this error term can
be separately estimated for applications of interest.  Moreover, in
the case of finite times, this error term can be trivially estimated
by the energy inequality and immediately yields a $C^2$, long range,
time dependent analog of the result of \cite{BT}.

For $M$ fixed and sufficiently large so that 
\eqref{coeff.M}, \eqref{coeffb.M} and \eqref{coeffc.M} hold,
we consider a smooth, radial, nondecreasing cutoff function $\rho$ which is supported in
$\{|x|\ge 2^M \}$ with $\rho(|x|)\equiv 1$ for $|x|\ge 2^{M+1}$.  
Then we define the exterior local smoothing space $\tX_e$
with norm
\[
\| u\|_{\tX_e} = \| \rho u\|_{\tX} + \| (1-\rho) u\|_{L^2_{t,x}}
\]
and the dual space $\tX'_e$ with norm 
\[
\|f\|_{\tX'_e} = \inf_{f = \rho f_1 +
  (1-\rho) f_2} \| f_1\|_{\tX'} + \|f_2\|_{L^2_{t,x}}. 
\]
Now we can state our exterior local smoothing estimates.
\begin{theorem}
  \label{main.ls.theorem}
  Let $n \geq 1$.  Assume that the coefficients $a^{ij}$, $b^i$ and
  $c$ are real and satisfy \eqref{coeff}, \eqref{coeffb},
  \eqref{coeffc}. Then the solution $u$ to \eqref{main.equation}
  satisfies
\begin{equation}
  \label{lsext}
\| u\|_{\tX_e \cap L^\infty_t L^2_x} \lesssim \|u_0\|_{L^2}
+ \|f\|_{\tX'_e+L^1_tL^2_x} + 
 \|u\|_{L^2_{t,x}(\{|x|\le 2^{M+1}\})}.
\end{equation}
\end{theorem}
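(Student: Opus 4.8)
The plan is to reduce the large-perturbation problem outside the ball $\{|x| \le 2^M\}$ to the small-perturbation setting already covered by Theorem~1.1 of \cite{T}, at the price of the spatially localized error term on the right of \eqref{lsext}. First I would introduce a modified operator $\tilde A(t,x,D)$ which agrees with $A$ on $\{|x| \ge 2^{M+1}\}$ but whose coefficients are truncated inside: replace $a^{ij}$ by $\chi a^{ij} + (1-\chi) \delta^{ij}$ and multiply $b^i, c$ by a cutoff, where $\chi$ is supported in $\{|x| \ge 2^M\}$ and equals $1$ on $\{|x| \ge 2^{M+1}\}$. By \eqref{coeff.M}, \eqref{coeffb.M}, \eqref{coeffc.M} the coefficients of $\tilde A$ satisfy the smallness hypotheses of Theorem~1.1 (in dimension $n=1,2$ one uses the $\tX$-norm version and Lemma~\ref{txdx} to handle the extra local square-integrability, noting that the lower-order coefficients of $\tilde A$ are globally $O(\epsilon)$ so the non-resonant framework applies). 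Hence the loss-less global local smoothing estimate \eqref{lsesmall}, with $\tX$ in place of $\dX$, holds for $\tilde P = D_t + \tilde A$.

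Next I would run a commutator/cutoff argument with $\rho$. Set $v = \rho u$; then $\tilde P v = \rho f + [\tilde A, \rho] u + (\tilde A - A)(\text{something supported where } \rho \neq 1)$. Since $\rho \equiv 1$ on $\{|x| \ge 2^{M+1}\}$ and $\tilde A = A$ there, all the newly-created source terms are supported in the compact shell $\{2^M \le |x| \le 2^{M+1}\}$. The commutator $[\tilde A,\rho]$ is a first-order operator with coefficients supported in that shell, so $\|[\tilde A,\rho] u\|_{L^1_t L^2_x}$ on any finite interval, or more usefully $\|[\tilde A,\rho]u\|_{L^2_{t,x}}$, is controlled by $\|u\|_{L^2_{t,x}(\{|x|\le 2^{M+1}\})}$ together with $\|\nabla u\|_{L^2_{t,x}(\text{shell})}$. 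Applying \eqref{lsesmall} for $\tilde P$ to $v$ gives
\[
\| \rho u\|_{\tX \cap L^\infty_t L^2_x} \lesssim \|u_0\|_{L^2} + \|f\|_{\tX'_e + L^1_t L^2_x} + \|u\|_{L^2_{t,x}(\{|x|\le 2^{M+1}\})} + \|\nabla u\|_{L^2_{t,x}(\{|x| \le 2^{M+1}\})},
\]
and adding the trivial bound $\|(1-\rho)u\|_{L^2_{t,x}} \lesssim \|u\|_{L^2_{t,x}(\{|x|\le 2^{M+1}\})} + \|u_0\|_{L^2}$ (via the energy inequality in $t$) together with the $L^\infty_t L^2_x$ energy bound yields the left side $\|u\|_{\tX_e \cap L^\infty_t L^2_x}$.

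The main obstacle is the gradient term $\|\nabla u\|_{L^2_{t,x}(\text{shell})}$ appearing from the commutator: the statement \eqref{lsext} only allows an $L^2$ error with no derivative, so this half-derivative-too-strong term must be absorbed or removed. I expect to handle it by an elliptic-regularity / local energy argument: multiply the equation \eqref{main.equation} by $\bar\chi u$ for a slightly larger cutoff $\bar\chi$ supported in $\{|x| \le 2^{M+2}\}$, integrate by parts using the ellipticity of $a^{ij}$ (uniform on compact sets by \eqref{coeff}), and bound $\|\nabla u\|_{L^2_{t,x}(\text{shell})}$ by $\|u\|_{L^2_{t,x}(\{|x|\le 2^{M+2}\})} + \|f\|_{L^1_t L^2_x} + \|u_0\|_{L^2}$ plus a harmless fraction of the already-controlled quantities. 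One must be careful that this enlarges the radius of the localized error region from $2^{M+1}$ to $2^{M+2}$; this is cosmetic since $M$ is an arbitrary admissible large constant, and one simply runs the whole argument with $\rho$ supported slightly further out so that the final error lands inside $\{|x| \le 2^{M+1}\}$ as stated. A secondary point to check is the time-dependence and the $L^1_t L^2_x$ versus $\tX'$ bookkeeping for the source terms, but these are routine once the spatial localization is in place.
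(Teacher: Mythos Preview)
Your reduction to a small-perturbation operator $\tilde A$ is exactly what the paper does as a first step, but the way you propose to close the argument has a genuine gap at the commutator term, and the suggested fix does not work.

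The commutator $[\tilde A,\rho]u$ is a first-order expression supported in the shell $\{2^M\le |x|\le 2^{M+1}\}$. To place it in $\tX'$ (locally $\sim H^{-1/2}$) you need $\|u\|_{L^2_t H^{1/2}_x(\text{shell})}$, and to place it in $L^1_tL^2_x$ you need a full gradient. Neither is controlled by the allowed error $\|u\|_{L^2_{t,x}(\{|x|\le 2^{M+1}\})}$. Your proposed ``elliptic regularity / local energy'' step---pairing the equation with $\bar\chi u$ and integrating by parts---cannot produce such a bound: the Schr\"odinger equation is not spatially elliptic once the $D_t$ term is present, and the local smoothing phenomenon provides exactly half a derivative, which is the very estimate you are trying to prove, not an input. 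Concretely, take the flat equation with a frequency-$\xi$ wave packet $u_0=e^{i\xi\cdot x}\phi(x)$; it crosses the shell in time $\sim |\xi|^{-1}$, so $\|\nabla u\|_{L^2_{t,x}(\text{shell})}\sim |\xi|^{1/2}\|u_0\|_{L^2}$ while $\|u\|_{L^2_{t,x}(\{|x|\le 2^{M+2}\})}\sim |\xi|^{-1/2}\|u_0\|_{L^2}$, so the claimed inequality fails for large $|\xi|$. The nested-cutoff variant (use $\rho_1$ with $\rho_1=1$ on $\supp\nabla\rho$, etc.) only pushes the missing half-derivative to a smaller shell and never terminates.

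The paper avoids this by \emph{not} treating the commutator as a forcing term at all. It proves the exterior smoothing bound directly via a positive commutator: one builds a frequency-localized multiplier $Q_\rho=\rho(\sum_k S_k Q_k S_k)\rho$ and analyzes $\langle i[A,Q_\rho]u,u\rangle$. The dangerous cross-term is then the \emph{bilinear} expression $\Im\langle [A_{(k)},\rho_{<k}]u,\,Q_k\rho_{<k}u\rangle$, not $\|[A,\rho]u\|_{\tX'}$. For high frequencies the paper chooses $Q_k$ with the metric built in, $Q_k=2^{-k}\delta\bigl(D_ia^{ij}_{(k)}x_j\phi(\delta|x|)+\text{adj.}\bigr)$, precisely so that this pairing factors as a nonpositive term $-\delta\int |x|^{-1}\rho'_{<k}\phi\,\rho_{<k}|Lu|^2$ plus a bounded potential term controlled by $2^{-k}\|\langle x\rangle^{-2}u\|_{L^2_{t,x}}^2$ (Lemma~\ref{lemma.high.freq}, property \eqref{com.high}). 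That sign condition is the missing idea: it lets the first-order commutator be discarded rather than estimated, and the remaining error involves only $\|u\|_{L^2_{t,x}(\text{shell})}$ with no derivatives.
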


In the low dimensional resonant case the situation is a bit more
delicate.  First of all, the above theorem does not give a meaningful estimate in the 
$n=1,2$ resonant case as the last term in the right of \eqref{lsext} blows up for
constant functions, which correspond to the zero resonance.
 Since we do not control the local $L^2$ norm for $\dX$
functions, truncation by the cutoff function $\rho$ does not preserve
the $\dX$ space. To remedy this we define a time dependent local
average for $u$, namely
\[
u_\rho =   \left( \int_{\R^n} (1-\rho) \ dx \right)^{-1}    \int_{\R^n} (1-\rho) u\ dx,
\]
and define a modified truncation by the self-adjoint operator
\[
T_\rho u = \rho u + (1-\rho) u_\rho.
\]
We note that $T_\rho$ leaves constant functions unchanged, as well as the
integral of $u$ (if finite).

Then we set
\[
\| u\|_{\dX_e} = \| T_\rho u\|_{\dX} + \|  u - T_\rho u \|_{L^2_{t,x}}
\]
and have the dual space $\dX'_e$ with norm 
\[
\|f\|_{\dX'_e} = \inf_{f = T_\rho f_1  +  (1-T_\rho) f_2} 
\| f_1\|_{\dX'} + \|f_2\|_{L^2_{t,x}}. 
\]
We now have the following alternative to
Theorem~\ref{main.ls.theorem} which is consistent with operators with
a constant zero resonance:

\begin{theorem}
  \label{main.ls.theorem.res}
  Let $n = 1,2$.  Assume that 

(i) the coefficients $a^{ij}$ are real and satisfy \eqref{coeff};

(ii) the coefficients $b^i$ are real, satisfy \eqref{coeffb},
and $\partial_i b^i = 0$;

(iii) there are no zero order terms, $c = 0$.

 Then the solution $u$ to \eqref{main.equation}
 satisfies
\begin{equation}
  \label{lsext.res}
\| u\|_{\dX_e \cap L^\infty_t L^2_x} \lesssim \|u_0\|_{L^2}
+ \|f\|_{\dX'_e+L^1_tL^2_x} + 
 \|u - u_\rho\|_{L^2_{t,x}(\{|x|\le 2^{M+1}\})}.
\end{equation}
\end{theorem}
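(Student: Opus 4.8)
The plan is to repeat the argument behind Theorem~\ref{main.ls.theorem}, but with the self-adjoint truncation $T_\rho$ replacing multiplication by $\rho$, and with one new structural input: when $c=0$ and $\partial_i b^i=0$ the operator $A$ annihilates the $x$-independent functions, $A\1=0$. First I would dispose of the two easy pieces of the left side of \eqref{lsext.res}. Since $u-T_\rho u=(1-\rho)(u-u_\rho)$ is supported in $\{|x|\le 2^{M+1}\}$, the term $\|u-T_\rho u\|_{L^2_{t,x}}$ is already dominated by the last term on the right of \eqref{lsext.res}. The $L^\infty_tL^2_x$ bound comes from the energy inequality in the bootstrap form
\[
\|u\|_{L^\infty_tL^2_x}^2\lesssim\|u_0\|_{L^2}^2+\|f\|_{\dX'_e+L^1_tL^2_x}^2+\|u\|_{\dX_e}^2,
\]
which uses the self-adjointness of $A$ and of $T_\rho$ and $1-T_\rho$, and which folds the $L^\infty_tL^2_x$ norm into the $\dX_e$ estimate. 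Thus everything reduces to the bound $\|T_\rho u\|_{\dX}\lesssim\|u_0\|_{L^2}+\|f\|_{\dX'_e+L^1_tL^2_x}+\|u-u_\rho\|_{L^2_{t,x}(\{|x|\le 2^{M+1}\})}$.

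I would then write down the equation satisfied by $v:=T_\rho u=\rho u+(1-\rho)u_\rho$. Because $u_\rho=u_\rho(t)$ is $x$-independent, $A\1=0$ gives $A((1-\rho)u_\rho)=-u_\rho A\rho$ and $[A,\rho]\1=A\rho$, and a short computation then yields
\[
P v=\rho f+[A,\rho](u-u_\rho)+(1-\rho) D_t u_\rho.
\]
The key features are that $[A,\rho]$ is a \emph{first order} operator whose coefficients are supported in the transition annulus $\mathcal{A}_M:=\{2^M\le|x|\le 2^{M+1}\}$ --- a region in which, by \eqref{coeff.M}--\eqref{coeffc.M}, the coefficients are a small perturbation of the flat ones --- and that both correction terms are supported in $\{|x|\le 2^{M+1}\}$ and involve $u$ only through $u-u_\rho$. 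Since $v$ is supported in $\{|x|\ge 2^M\}$ up to the $x$-independent summand $(1-\rho)u_\rho$, which $A$ ignores, I may (exactly as for Theorem~\ref{main.ls.theorem}) replace $A$ in $\{|x|\ge 2^M\}$ by a globally small perturbation $\tilde A$, so that $(D_t+\tilde A)v=Pv$, and run the small-perturbation positive-commutator/multiplier estimate of \cite{T} on $v$, with a multiplier $Q$ of order zero localized to $\{|x|\gtrsim 2^M\}$.

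In that estimate the positive main term reproduces $\|T_\rho u\|_{\dX}^2$, except for its restriction to the bounded set $\mathcal{A}_M$, which is comparable there, modulo constants, to $\rho(u-u_\rho)$ and hence bounded by $\|u-u_\rho\|_{L^2_{t,x}(\{|x|\le 2^{M+1}\})}^2$. The $\rho f$ contribution is controlled by unwinding the definitions of $\dX'_e$, $\tX'$ and applying the Hardy/BMO inequalities \eqref{Hardy}--\eqref{Hardylow}; the initial-data term is harmless since $|u_\rho(0)|\lesssim\|u_0\|_{L^2}$. The term $(1-\rho) D_t u_\rho$ must not be moved to the source side --- $\dot u_\rho$ need not decay in time --- but instead kept inside the time-boundary part $\tfrac{d}{dt}\langle Qv,v\rangle$ of the multiplier identity, where, $D_t u_\rho$ being $x$-independent and $|u_\rho|\lesssim\sup_t\|u(t)\|_{L^2}$, it produces only an energy-type boundary-in-time contribution; and $\dot u_\rho$ itself is rewritten, by differentiating the defining average, substituting $\partial_t u=i(f-Au)$ and using $\int A\rho\,dx=0$, purely in terms of the data and of $u-u_\rho$ on $\{|x|\le 2^{M+1}\}$.

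The main obstacle --- the genuinely technical core, already faced in the proof of Theorem~\ref{main.ls.theorem} --- is the first order commutator source $[A,\rho](u-u_\rho)$: as a first order expression at a fixed scale it cannot be placed in $\dX'+L^1_tL^2_x$ without losing close to a full derivative, which the half-derivative gain of local smoothing cannot pay for. The way around it is to absorb the cutoff into the commutator itself rather than to treat it as an inhomogeneity: the symbol of $Q$ is chosen so that its Poisson bracket with the principal symbol of $\tilde A$ is nonnegative and dominates the $\dX$-density away from $\mathcal{A}_M$, while all the transition errors near $\mathcal{A}_M$ are gathered into a single term $\langle Bv,v\rangle$, with $B$ supported in $\{|x|\le 2^{M+1}\}$, that the argument of Theorem~\ref{main.ls.theorem} bounds by the local $L^2_{t,x}$ norm of $u$. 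My only addition is the observation that, because $c=0$ and $\partial_i b^i=0$, the part of $B$ responsible for seeing a constant summand of $u$ vanishes, so this term is in fact bounded by $\|u-u_\rho\|_{L^2_{t,x}(\{|x|\le 2^{M+1}\})}^2$. Absorbing the small terms on the left then yields \eqref{lsext.res}.
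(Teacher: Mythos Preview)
Your approach is essentially the paper's: both exploit $A\1=0$ (from $c=0$, $\partial_ib^i=0$) to reduce the resonant estimate to the nonresonant commutator bound of Proposition~\ref{Qprop} applied to $u-u_\rho$ in place of $u$. The paper's organization is cleaner, however. Rather than writing an equation for $v=T_\rho u$ and handling $[A,\rho](u-u_\rho)$ and $(1-\rho)D_tu_\rho$ as separate source terms, it defines the multiplier $Q_{res}=T_\rho Q T_\rho$ (with $Q$ as in \eqref{qlowd}) acting directly on $u$, and proves the algebraic identity
\[
\langle C_{res}u,u\rangle=\langle C(u-u_\rho),\,u-u_\rho\rangle+c\,\Im\!\int(u-u_\rho)\,A\rho\,dx\cdot\langle(1-\rho),\,QT_\rho u\rangle,
\]
using $S_kT_\rho u=S_k\rho(u-u_\rho)$ (constants are killed by $S_k$) and $T_\rho Au=\rho A(u-u_\rho)-c(1-\rho)\int(u-u_\rho)A\rho\,dx$. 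The first term is then bounded below directly by Proposition~\ref{Qprop}(iii), and the correction by Cauchy--Schwarz together with \eqref{qtx}. One small correction to your write-up: the term $(1-\rho)D_tu_\rho$ is not a boundary-in-time contribution as you first say; once you expand $D_tu_\rho$ via $\partial_t u=i(f-Au)$ (your alternative treatment), it produces exactly the correction term in the identity above, so the two routes coincide after bookkeeping.
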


Once we have the local smoothing estimates, the parametrix
construction in \cite{T} allows us to obtain corresponding Strichartz
estimates.  If $(p,q)$ is a Strichartz pair we define the exterior
space $\tX_e(p,q)$ with norm
\[
\| u\|_{\tX_e(p,q)} = \| u\|_{\tX_e} + \| \rho u\|_{L^p_t L^q_x}
\]
and the dual space $\tX'(p,q)$ with norm 
\[
\|f\|_{\tX'_e(p,q)} = \inf_{f =  f_1 +  \rho f_2} \| f_1\|_{\tX'_e} + 
\|f_2\|_{L^{p'}_t L^{q'}_x}.  
\]

\begin{theorem}
  \label{main.est.theorem}
   Let $n \geq 1$.  Assume that the coefficients $a^{ij}$, $b^i$ and
  $c$ are real and satisfy \eqref{coeff}, \eqref{coeffb},
  \eqref{coeffc}.  Then for any two Strichartz pairs
  $(p_1,q_1)$ and $(p_2, q_2)$, the solution $u$ to
  \eqref{main.equation} satisfies
\begin{equation}
  \label{fse}
\| u\|_{\tX_e(p_1,q_1) \cap L^\infty_t L^2_x} \lesssim \|u_0\|_{L^2}
+ \|f\|_{\tX'_e(p_2,q_2)+L^1_tL^2_x} + 
 \|u\|_{L^2_{t,x}(\{|x|\le 2^{M+1}\})}.
\end{equation}
\end{theorem}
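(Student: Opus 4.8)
Since the left side of \eqref{fse} equals $\|u\|_{\tX_e\cap L^\infty_tL^2_x}+\|\rho u\|_{L^{p_1}_tL^{q_1}_x}$, and Theorem~\ref{main.ls.theorem} already bounds the first summand by the right side of \eqref{fse}, the only new ingredient needed is the Strichartz bound
\[
\|\rho u\|_{L^{p_1}_tL^{q_1}_x}\ \lesssim\ \|u_0\|_{L^2}+\|f\|_{\tX'_e(p_2,q_2)+L^1_tL^2_x}+\|u\|_{L^2_{t,x}(\{|x|\le 2^{M+1}\})}.
\]
The plan is to obtain this by feeding the exterior local smoothing estimate \eqref{lsext} into the outgoing parametrix construction of \cite{T}, exactly as local smoothing yields Strichartz in the small-perturbation setting of \cite{T}.

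First I would pass to a globally small perturbation. Because the series in \eqref{coeff}--\eqref{coeffc} converge, $M$ can be fixed with a little extra room, so that \eqref{coeff.M}--\eqref{coeffc.M} continue to hold with $M$ replaced by $M-1$. Let $\tilde A$ be obtained by multiplying the non-Euclidean parts of the coefficients of $A$ by a radial cutoff equal to $1$ on $\{|x|\ge 2^M\}$ and to $0$ on $\{|x|\le 2^{M-1}\}$. By the shifted bounds \eqref{coeff.M}--\eqref{coeffc.M} the coefficients of $\tilde A$ are a globally small (and, in low dimensions, non-resonant) perturbation of the Euclidean ones, while $\tilde A=A$ on $\{|x|\ge 2^M\}\supseteq \supp\rho$. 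Hence $\tilde A$ enjoys the global local smoothing estimate and, via the parametrix of \cite{T}, the corresponding global Strichartz estimates: for $(D_t+\tilde A)v=g$,
\[
\|v\|_{L^{p_1}_tL^{q_1}_x\cap L^\infty_tL^2_x}\ \lesssim\ \|v(0)\|_{L^2}+\|g\|_{\tX'+L^{p'_2}_tL^{q'_2}_x+L^1_tL^2_x}.
\]
Since $\rho$ is time independent and supported where $\tilde A=A$, the function $w=\rho u$ solves $(D_t+\tilde A)w=\rho f+[A,\rho]u$; the commutator is a purely spatial first order operator with coefficients supported in the fixed annulus $R=\{2^M\le|x|\le 2^{M+1}\}$, and using the divergence form of $A$ it can be written as $[A,\rho]u=\sum_i D_i(g_iu)+e\,u$ with $g_i,e$ smooth and supported in $R$. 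Applying the Strichartz estimate for $\tilde A$ to $w$ then reduces the proof to three bounds: $\|\rho f\|_{\tX'+L^{p'_2}_tL^{q'_2}_x+L^1_tL^2_x}\lesssim\|f\|_{\tX'_e(p_2,q_2)+L^1_tL^2_x}$ (routine, since multiplication by $\rho$ and by the cutoffs defining $\tX'_e(p_2,q_2)$ is bounded on the relevant spaces and $\tX'$ coincides with $L^2$ on compact sets); $\|e\,u\|_{\tX'}\lesssim\|u\|_{L^2_{t,x}(\{|x|\le 2^{M+1}\})}$ (for the same reason); and $\|\sum_iD_i(g_iu)\|_{\tX'}\lesssim\|u\|_{\tX(R)}$.

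This last bound is the heart of the matter, and where the half-derivative loss carried by $D_i$ in the commutator must be matched by the half-derivative gained in local smoothing. Locally $\tX\sim H^{1/2}$ and $\tX'\sim H^{-1/2}$, and $D_i\colon H^{1/2}\to H^{-1/2}$, so $\|\sum_iD_i(g_iu)\|_{\tX'}\lesssim\sum_i\|g_iu\|_{H^{1/2}}\lesssim\|u\|_{\tX(R)}$ is formally clear (at low frequencies the derivative is harmless and is absorbed by the $\la x\ra L^2$ component of $\tX'$). The content is that $\|u\|_{\tX(R)}$ is controlled by the right side of \eqref{fse}. One cannot extract this from \eqref{lsext} for the cutoff $\rho$ itself, whose transition region $R$ reaches $\{|x|=2^M\}$, where a trapped ray would destroy weighted local smoothing; but one can extract it from \eqref{lsext} applied with a cutoff $\hat\rho$ that equals $1$ on all of $R$ and is supported slightly inside $2^M$, in $\{|x|\ge 2^{M-1}\}$ --- which is legitimate exactly because the coefficient smallness extends past $2^M$ (the extra room built into $M$). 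This yields $\|u\|_{\tX(R)}=\|\hat\rho u\|_{\tX(R)}\lesssim\|u_0\|_{L^2}+\|f\|_{\tX'_e+L^1_tL^2_x}+\|u\|_{L^2_{t,x}(\{|x|\le 2^{M}\})}$, bounded by the right side of \eqref{fse}.

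The main obstacle is precisely this commutator step: one must arrange the cutoffs --- placing the transition of $\rho$ just outside $2^M$, reserving a fixed amount of room in the choice of $M$, and using the enlarged cutoff $\hat\rho$ so that $R=\supp\nabla\rho$ lies inside the region where loss-less local smoothing holds --- so that the $D_i$ appearing in $[A,\rho]u$ is exactly absorbed by the $|D|^{1/2}$ gained in passing from $\tX'$ to $\tX$ on $R$. Once the three bounds are in place, the Strichartz estimate for $\rho u$ follows, and combining it with Theorem~\ref{main.ls.theorem} yields \eqref{fse}; over finite time intervals the last term in \eqref{fse} is controlled by the energy inequality, giving the time-dependent $C^2$ long-range analogue of \cite{BT}.
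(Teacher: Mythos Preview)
Your overall strategy---pass to a globally small perturbation $\tilde A$, apply the full Strichartz/smoothing estimate of \cite{T} to $\rho u$, and absorb the commutator $[A,\rho]u$ via exterior local smoothing on the transition annulus $R$---is sound, and the use of a nested cutoff $\hat\rho$ with $\hat\rho=1$ on $R$ is exactly the right idea (the paper does the same thing, with cutoffs $\rho\prec\rho_1\prec\rho_2$). But there is a genuine gap in the step where you control $\|u\|_{\tX(R)}$.

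You appeal to \eqref{lsext} with the shifted cutoff $\hat\rho$ and record the output as $\|u_0\|_{L^2}+\|f\|_{\tX'_e+L^1_tL^2_x}+\|u\|_{L^2(\{|x|\le 2^M\})}$. The problem is the middle term: the right side of \eqref{fse} allows $f\in\tX'_e(p_2,q_2)$, which contains pieces of the form $\rho g$ with $g\in L^{p'_2}_tL^{q'_2}_x$, and such a $g$ is \emph{not} in $\tX'_e+L^1_tL^2_x$ in general. Theorem~\ref{main.ls.theorem} simply does not accept Strichartz-dual forcing, so this application is illegitimate whenever the $(p_2,q_2)$ part of the forcing is nontrivial. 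Replacing the appeal to \eqref{lsext} by the full small-perturbation estimate for $\tilde A$ applied to $\hat\rho u$ does not help either: that estimate does accept $L^{p'_2}_tL^{q'_2}_x$ forcing, but then the commutator $[A,\hat\rho]u$ appears, supported on $\operatorname{supp}\nabla\hat\rho\subset\{2^{M-1}\le|x|<2^M\}$, and bounding it in $\tX'$ again requires a half-derivative of $u$ there---where you only control $\|u\|_{L^2}$. Iterating cutoffs just pushes the problem inward without closing it.

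This is precisely the obstruction the paper removes with the parametrix. The paper sets $v=u-\rho_1 K(\rho g)$, where $K$ is the parametrix of Theorem~\ref{prop.Tataru.parametrix.nr}; the error bound \eqref{lperror2bt} converts the $L^{p'_2}_tL^{q'_2}_x$ forcing into $\tX'$ forcing, so that $Pv\in\tX'_{e2}$ and Theorem~\ref{main.ls.theorem} (with the outer cutoff $\rho_2$) legitimately yields $\|\rho_2 v\|_{\tX}$, hence $\|v\|_{\tX(R)}$. After that, the commutator step and the passage to Strichartz (via Theorem~\ref{prop.Tataru.xtolp.nr}) proceed essentially as you outline. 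In short: your argument is correct for $f\in\tX'_e+L^1_tL^2_x$, but to admit $L^{p'_2}_tL^{q'_2}_x$ on the right you need the parametrix (or an equivalent device) to trade Strichartz-dual forcing for local-smoothing forcing before invoking Theorem~\ref{main.ls.theorem}.
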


Correspondingly, in the resonant case we define
\[
\| u\|_{\dX_e(p,q)} = \| u\|_{\dX_e} + \| \rho u\|_{L^p_t L^q_x}
\]
and the dual space $\dX'_e(p,q)$ with norm 
\[
\|f\|_{\dX'_e(p,q)} = \inf_{f =  f_1 +  \rho f_2} \| f_1\|_{\dX'_e} + 
\|f_2\|_{L^{p'}_t L^{q'}_x}.  
\]
Then we have 

\begin{theorem}
  \label{main.est.theorem.res}
  Let $n = 1,2$. Assume that the coefficients of $P$ are as in
  Theorem~\ref{main.ls.theorem.res}.  Then for any two Strichartz
  pairs $(p_1,q_1)$ and $(p_2, q_2)$, the solution $u$ to
  \eqref{main.equation} satisfies
\begin{equation}
  \label{fselow}\| u\|_{\dX_e(p_1,q_1)\cap L^\infty_t L^2_x} \lesssim 
\|u_0\|_{L^2}  + \|f\|_{\dX'_e(p_2,q_2)+L^1_t L^2_x} 
  +  \|u-u_\rho\|_{L^2_{t,x}(\{|x|\le 2^{M+1}\})}.
\end{equation}
\end{theorem}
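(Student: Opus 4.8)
The plan is to obtain \eqref{fselow} from the exterior local smoothing estimate \eqref{lsext.res} of Theorem~\ref{main.ls.theorem.res} by exactly the parametrix mechanism of \cite{T} through which Theorem~\ref{main.est.theorem} is deduced from Theorem~\ref{main.ls.theorem}; the only structural changes are that the cutoff $\rho$ must be replaced throughout by the constant-preserving truncation $T_\rho$, and that \eqref{lsext.res} is used in place of \eqref{lsext}. First I would fix $M$ so that \eqref{coeff.M} and \eqref{coeffb.M} hold and introduce an auxiliary operator $\tilde A = D_i \tilde a^{ij} D_j$, with $\tilde b^i = 0$ and $\tilde c = 0$, whose top order coefficients agree with $a^{ij}$ on $\{|x| \ge 2^M\}$ and are interpolated down to $I_n$ across $\{2^{M-1} \le |x| \le 2^M\}$; since on the sphere $|x| = 2^M$ the coefficients $a^{ij}$ are already $O(\epsilon)$ together with their first two derivatives, this can be arranged so that $\tilde a^{ij}$ obeys \eqref{coeff} with $\kappa = O(\epsilon)$, and $\tilde A$ still annihilates constants. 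Thus $\tilde P = D_t + \tilde A$ falls under the quoted theorem of \cite{T} (case (ii) in low dimensions, with $\tilde b = \tilde c = 0$) and hence, via its parametrix construction, satisfies the global Strichartz estimates in the $\dX$ and $L^p_t L^q_x$ norms.

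Next I would apply these estimates to $v := T_\rho u$. Because $\rho$ is supported in $\{|x| \ge 2^M\}$, the operator $T_\rho$ acts as the identity on $\{|x| \ge 2^{M+1}\}$, commutes with $D_t$ (the weight $1-\rho$ being time independent), and commutes with the spatial averaging over $\{|x| \le 2^{M+1}\}$; using also $T_\rho(Pu) = T_\rho f$ one computes
\[
\tilde P v = T_\rho f + g, \qquad g = (\tilde A - A)(T_\rho u) + [A, T_\rho] u .
\]
On $\{|x| \ge 2^{M+1}\}$, where $\tilde A = D_i a^{ij} D_j$ and $T_\rho$ is the identity, the identities $\div b = 0$, $c = 0$ give $g = - b^i D_i u$, a perturbatively small term since $\la x\ra|b| \le \epsilon$ there, which is absorbed against the $\dX_e$ norm controlled by Theorem~\ref{main.ls.theorem.res}. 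The remaining part of $g$ is supported in $\{|x| \le 2^{M+1}\}$; here the reason for truncating with $T_\rho$ rather than $\rho$ --- just as in the passage from Theorem~\ref{main.ls.theorem} to Theorem~\ref{main.ls.theorem.res} --- is that $T_\rho$ fixes constants, so this part of $g$ annihilates constants and, after integration by parts moving all derivatives off $u$, its coefficients have vanishing spatial integral and therefore pair only against $u - u_\rho$. Its zeroth order pieces thus contribute $\|u - u_\rho\|_{L^2_{t,x}(\{|x| \le 2^{M+1}\})}$, while the pieces still carrying a derivative of $u$ in the transition annulus $\{2^M \le |x| \le 2^{M+1}\}$ are handled exactly as in the proof of Theorem~\ref{main.est.theorem}, the residual again being of this same local form.

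It then remains to assemble the estimate. The Strichartz estimate for $\tilde P$ applied to $\tilde P v = T_\rho f + g$ gives
\[
\|v\|_{\dX \cap L^\infty_t L^2_x \cap L^{p_1}_t L^{q_1}_x} \lesssim \|v(0)\|_{L^2} + \|T_\rho f\|_{\dX'(p_2,q_2) + L^1_t L^2_x} + \|g\|_{\dX' + L^1_t L^2_x},
\]
and here $\|v(0)\|_{L^2} \lesssim \|u_0\|_{L^2}$, while $\|T_\rho f\|_{\dX'(p_2,q_2)+L^1_t L^2_x} \lesssim \|f\|_{\dX'_e(p_2,q_2)+L^1_t L^2_x}$ because $T_\rho$ is a bounded, self-adjoint, constant-preserving operator on the spaces involved, and $g$ is estimated as above. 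Since $u - T_\rho u = (1-\rho)(u - u_\rho)$ and $\rho u = \rho v + \rho(1-\rho)(u - u_\rho)$ are supported in $\{|x| \le 2^{M+1}\}$, combining the last display with \eqref{lsext.res} --- which already supplies the $\dX_e \cap L^\infty_t L^2_x$ part of the left side --- and interpolating the localized remainder in the $L^{p_1}_t L^{q_1}_x$ norm between $L^2_{t,x}(\{|x| \le 2^{M+1}\})$ and $L^\infty_t L^2_x$ yields \eqref{fselow}.

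The main difficulty is not the dispersive analysis, which is provided wholesale by \cite{T} once one has reduced to a small long range perturbation, but rather the need to keep every reduction compatible with the quotient-by-constants structure inherent to the resonant low dimensional setting: the truncation must be $T_\rho$ and not $\rho$, the auxiliary operator $\tilde A$ must still kill constants, and each localized error term must be shown, by integration by parts, to pair only with $u - u_\rho$, so that the right side of \eqref{fselow} appears and not the meaningless $\|u\|_{L^2_{t,x}(\{|x| \le 2^{M+1}\})}$ of \eqref{fse}. A secondary technical point, shared with the proof of Theorem~\ref{main.est.theorem}, is that the first order commutator errors localized in the transition annulus cannot be placed directly in $L^1_t L^2_x$ or $\dX'$ using only a local $L^2$ bound on $u$, and must instead be absorbed through the local smoothing estimate \eqref{lsext.res}; carrying this absorption out while preserving the constants bookkeeping is the delicate step.
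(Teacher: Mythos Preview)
Your overall strategy is sound and captures the essential point of the resonant case---replacing $\rho$ by $T_\rho$ throughout so that every error depends only on $u-u_\rho$---but it differs from the paper's route. The paper does \emph{not} build an auxiliary globally-small operator $\tilde A$. Instead it works with the original $P$ and invokes the large-$\kappa$ parametrix $K$ of Theorem~\ref{prop.Tataru.parametrix.2} together with Theorem~\ref{prop.Tataru.xtolp}: writing $f = f_1 + \rho g$, it sets $v = u - T_{\rho_1}K\rho g$ (with an intermediate cutoff $\rho_1$), applies \eqref{lsext.res} to $v$, then truncates by $T_\rho$ and feeds $T_\rho v$ into Theorem~\ref{prop.Tataru.xtolp}. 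The commutator $[A,T_\rho]w$ is computed explicitly as $[A,\rho](w-w_\rho) - (1-\rho)(Aw)_\rho$ with $(Aw)_\rho = -c_\rho\int (w-w_\rho)A\rho\,dx$, which is exactly the ``pairs only with $u-u_\rho$'' structure you describe. Your alternative---reduce to a small-$\kappa$ operator $\tilde A$ agreeing with $A$ on $\{|x|\ge 2^M\}$ and apply the black-box small-$\kappa$ result of \cite{T} to $v=T_\rho u$---is a legitimate shortcut; it trades the parametrix machinery for the (easy) observation that $T_\rho u$ is constant on $\{|x|<2^M\}$, so $(\tilde A - A)T_\rho u$ vanishes there. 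The first-order commutator errors in the annulus are then controlled via the $\dX_e$ bound already supplied by \eqref{lsext.res}, as you say.

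There is one genuine gap in your last step. You write $\rho u = \rho v + \rho(1-\rho)(u-u_\rho)$ and propose to bound $\|\rho(1-\rho)(u-u_\rho)\|_{L^{p_1}_tL^{q_1}_x}$ by ``interpolating between $L^2_{t,x}$ and $L^\infty_tL^2_x$''. Both endpoints carry $L^2_x$ spatial integrability, so interpolation yields only $L^{p_1}_tL^2_x$; for a function supported in a fixed compact set the $L^{q_1}_x$ norm with $q_1>2$ is \emph{stronger} than $L^2_x$, not weaker, so this step fails. The paper avoids the issue by stopping at $\|T_\rho u\|_{L^{p_1}_tL^{q_1}_x}$ and not attempting to pass to $\|\rho u\|_{L^{p_1}_tL^{q_1}_x}$: since $\rho u - T_\rho u = -(1-\rho)u_\rho$ and $u_\rho$ is controlled only in $L^\infty_t$ (not $L^{p_1}_t$ on an infinite interval), the Strichartz bound for $T_\rho u$ is the natural output in the resonant setting. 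Your argument is complete if you simply record the Strichartz estimate for $v=T_\rho u$ and drop the final interpolation.
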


In both cases the space-time norms are over $[0,T]\times \R^n$ for any
time $T>0$ with constants independent of $T$.
If the time $T$ is finite, then we may use energy estimates to
trivially bound the error term.  Doing so results in the following,
which is a $C^2$-analog of the exterior Strichartz estimates of
\cite{BT}.
\begin{corr}
  \label{corr.main.theorem}
(a.)  Assume that the coefficients $a^{ij}$, $b^i$, and $c$ are as in
  Theorem~\ref{main.ls.theorem}.  Then for any two Strichartz pairs
  $(p_1,q_1)$ and $(p_2, q_2)$, the solution $u$ to
  \eqref{main.equation} satisfies
\begin{equation}
  \label{fse2}
\| u\|_{\tX(p_1,q_1)\cap L^\infty_t L^2_x} \lesssim_T \|u_0\|_{L^2}
+ \|f\|_{\tX'(p_2,q_2)+L^1_t L^2_x}.
\end{equation}

(b.) Assume that the coefficients $a^{ij}$ and $b^i$ are as in
Theorem~\ref{main.ls.theorem.res}.  Then for any two Strichartz pairs
$(p_1,q_1)$ and $(p_2,q_2)$, the solution $u$ to \eqref{main.equation}
satisfies
\begin{equation}
  \label{fse2.res}
\|u\|_{\dX(p_1,q_1)\cap L^\infty_t L^2_x}\lesssim_T \|u_0\|_{L^2}
+ \|f\|_{\dX'(p_2,q_2)+L^1_tL^2_x}.
\end{equation}

In both cases, the space-time norms are over $[0,T]\times \R^n$ and 
$T>0$ is finite.
\end{corr}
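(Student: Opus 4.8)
The plan is to obtain both parts directly from the exterior estimates of Theorems~\ref{main.est.theorem} and~\ref{main.est.theorem.res}: over a bounded time interval the spatially localized error term on the right of \eqref{fse}, resp.\ \eqref{fselow}, is controlled by a standard $L^2$ energy estimate, so that once it is absorbed into the right-hand side one is left precisely with the claimed bounds \eqref{fse2}, resp.\ \eqref{fse2.res}. The only input not already contained in Theorems~\ref{main.est.theorem}--\ref{main.est.theorem.res} is therefore the energy estimate, and the rest is bookkeeping with the definitions of the localized norms over $[0,T]\times\R^n$.

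First I would record the energy estimate, which is the one place where the reality of the coefficients is used. Since $a^{ij}$, $b^i$ and $c$ are real and $A(t,x,D)=D_i a^{ij}D_j+b^iD_i+D_ib^i+c$ is written in this manifestly symmetric form (with $a^{ij}=a^{ji}$, as usual), the operator $A(t)$ is formally self-adjoint for each fixed $t$, so that $\la A(t)v,v\ra\in\R$. Pairing \eqref{main.equation} with $u$ and taking imaginary parts cancels the contribution of $A$ and gives
\[
\tfrac{d}{dt}\|u(t)\|_{L^2}^2=-2\,\mathrm{Im}\,\la f(t),u(t)\ra,
\]
whence $\|u\|_{L^\infty_t L^2_x([0,T]\times\R^n)}\lesssim\|u_0\|_{L^2}+\|f\|_{L^1_t L^2_x}$ with a constant independent of $T$ and of the size of the coefficients — the latter because the $A$-term is cancelled exactly rather than merely estimated. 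The identical computation applies under the hypotheses of part~(b), where in addition $c=0$ and $\partial_i b^i=0$.

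Next I would dispose of the error terms using Hölder's inequality in time on $[0,T]$. For part~(a),
\[
\|u\|_{L^2_{t,x}(\{|x|\le 2^{M+1}\})}\le T^{1/2}\,\|u\|_{L^\infty_t L^2_x}\lesssim_T\|u_0\|_{L^2}+\|f\|_{L^1_t L^2_x},
\]
so the last term on the right of \eqref{fse} is dominated by quantities already present there. For part~(b) one observes in addition that $|u_\rho(t)|\lesssim_M\|u(t)\|_{L^2}$ by the Cauchy--Schwarz inequality, so that $\|u-u_\rho\|_{L^2_{t,x}(\{|x|\le 2^{M+1}\})}\lesssim_T\|u\|_{L^\infty_t L^2_x}$, and the last term on the right of \eqref{fselow} is absorbed in the same way. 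Substituting these bounds back into \eqref{fse} and \eqref{fselow} yields \eqref{fse2} and \eqref{fse2.res}.

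I do not anticipate a genuine obstacle, this being a corollary designed to follow from the main theorems; the only point demanding some care is the rigorous justification of the energy identity at the regularity at which $u$ is taken to solve \eqref{main.equation}, the coefficients $a^{ij}$ being only $C^2$ in $x$ and Lipschitz in $t$, with $b^i$ and $c$ merely bounded with the indicated decay. I would handle this in the standard way: establish the identity for smooth solutions and then approximate $u$ in $C([0,T];L^2_x)$ by mollifying $u_0$ and $f$, the convergence being furnished by the energy estimate applied to differences of solutions.
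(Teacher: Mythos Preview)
Your approach is exactly the one the paper has in mind: the text preceding Corollary~\ref{corr.main.theorem} says only that on a finite time interval the localized error term is ``trivially'' bounded by the energy inequality, and that is precisely what you do.

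One small point of care: your energy identity yields $\|u\|_{L^\infty_t L^2_x}\lesssim\|u_0\|_{L^2}+\|f\|_{L^1_tL^2_x}$ only when $f\in L^1_tL^2_x$, whereas in \eqref{fse} and \eqref{fselow} the forcing may lie in $\tX'_e(p_2,q_2)$, resp.\ $\dX'_e(p_2,q_2)$. The cleanest way to close this, and the reading most consistent with the paper's one-line remark, is to observe that $\|u\|_{L^\infty_tL^2_x}$ already sits on the \emph{left} of \eqref{fse} and \eqref{fselow}: combining \eqref{fse} with $\|u\|_{L^2_{t,x}(\{|x|\le 2^{M+1}\})}\le T^{1/2}\|u\|_{L^\infty_tL^2_x}$ gives a bound of the form $\text{LHS}\le C\cdot(\text{data})+CT^{1/2}\cdot\text{LHS}$, which one absorbs for $T$ small and then iterates over subintervals. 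This avoids needing a separate energy estimate valid for the full class of $f$. With that adjustment your argument is complete.
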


We conclude this subsection with a few remarks concerning 
several alternative set-ups for these results.

\subsubsection{ Boundary value problems }

Our proof of Theorems~\ref{main.ls.theorem},\ref{main.ls.theorem.res},
\ref{main.est.theorem}~\ref{main.est.theorem.res} treats the interior 
of the ball $B = \{ |x| < 2^M \}$ as a black box with the sole property
that the energy is conserved by the evolution.
Hence the results remain valid for exterior boundary problems.
Precisely, take a bounded domain $\Omega \subset B$
and consider either the Dirichlet problem
\begin{equation}
\left\{ \begin{array}{lc}  Pu = f & \text{in } \Omega^c 
\cr u(0) = u_0 & \cr 
u = 0&  \text{in } \partial \Omega \end{array} \right.
\label{D}\end{equation}
or the Neumann problem
\begin{equation}
\left\{ \begin{array}{lc}  Pu = f & \text{in } \Omega^c 
\cr u(0) = u_0 & \cr \displaystyle 
\frac{\partial u}{\partial \nu} = 0&  \text{in } \partial \Omega
 \end{array} \right.
\label{N}\end{equation}
where
\[
\frac{\partial}{\partial \nu} = \nu_i (a^{ij} D_j +  b^i)
\]
and $\nu$ is the unit normal to $ \partial \Omega$.

Then we have 
\begin{corr}
  a) The results in Theorems~\ref{main.ls.theorem} and
  \ref{main.est.theorem} remain valid for both the Dirichlet
  problem \eqref{D} and the Neumann problem \eqref{N}.

  b) The results in Theorems~ \ref{main.ls.theorem.res} and
  \ref{main.est.theorem.res} remain valid for  
the Neumann problem \eqref{N} with the additional condition
$b^i \nu_i = 0$ on $ \partial \Omega$.
\end{corr}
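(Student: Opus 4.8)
The plan is to build on the observation made just above: the proofs of Theorems~\ref{main.ls.theorem}--\ref{main.est.theorem.res} use nothing about the interior ball $B=\{|x|<2^M\}$ beyond an $L^\infty_tL^2_x$ energy bound for the evolution and, in the resonant setting, the conservation of the spatial mean $\int u\,dx$. Since $\Omega\subset B$ and the cutoff $\rho$ is supported in $\{|x|\ge 2^M\}\subset\Omega^c$, the localized problems that produce all of these estimates never meet the boundary $\partial\Omega$, and the conditions in \eqref{D} and \eqref{N} enter only through these two conservation statements. So the argument would split into: (a) checking that the exterior localization step of \cite{T} and of the proofs of Theorems~\ref{main.ls.theorem} and \ref{main.est.theorem} goes through verbatim, and (b) verifying the energy identity on $\Omega^c$ for \eqref{D} and \eqref{N}, together with the mean identity in the setting of part b) of the corollary.

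For (a) one recalls the mechanism. Given a solution $u$ of \eqref{D} or \eqref{N} on $\Omega^c$, the function $\rho u$ --- and $T_\rho u$ in the resonant case, with the spatial averages $u_\rho$ and $u_{D_j}$ reinterpreted as averages over $\Omega^c$ --- extended by zero to $\R^n$ is supported in $\{|x|\ge 2^M\}$, a region in which $u$ solves $Pu=f$ classically; hence it solves $\tilde P(\rho u)=\rho f+[A,\rho]u$ on all of $\R^n$, where $\tilde P$ arises from $P$ by altering the coefficients inside $\{|x|<2^M\}$ --- a set disjoint from the support of $\rho u$ --- so that, by \eqref{coeff.M}, \eqref{coeffb.M} and \eqref{coeffc.M}, $\tilde P$ is a globally small long range perturbation of $-\Delta$ to which the small-perturbation local smoothing estimate of \cite{T} applies. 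The commutator $[A,\rho]u$ is first order in $u$ and supported in the annulus $\{2^M\le|x|\le 2^{M+1}\}\subset\Omega^c$, so it is absorbed into the localized error term exactly as in the proofs of Theorems~\ref{main.ls.theorem} and \ref{main.ls.theorem.res}, while the Strichartz bounds come from the \cite{T} parametrix for $\tilde P$ as in Theorem~\ref{main.est.theorem}. No step of this reduction refers to $\partial\Omega$; it uses only that $\|u\|_{L^\infty_tL^2_x(\Omega^c)}\lesssim\|u_0\|_{L^2}+\|f\|_{L^1_tL^2_x}$.

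It remains to check (b). Since $A$ is written in symmetric form with real coefficients, it is formally self-adjoint on $\Omega^c$, and integration by parts --- the bulk terms being real, the boundary terms combining into a single normal-derivative term --- gives, for a solution of $Pu=f$ on $\Omega^c$,
\[
\frac{d}{dt}\|u(t)\|_{L^2(\Omega^c)}^2 = 2\,\mathrm{Re}\,\la f,u\ra_{L^2(\Omega^c)} + 2\,\mathrm{Re}\int_{\partial\Omega}\frac{\partial u}{\partial\nu}\,\overline{u}\,dS .
\]
The boundary integral vanishes for \eqref{D} since $u=0$ on $\partial\Omega$, and for \eqref{N} since $\partial u/\partial\nu=\nu_i(a^{ij}D_j u+b^i u)=0$ there; in either case we recover the required energy bound, which proves part a).

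For part b) the resonant argument additionally requires that the constant function stay a generalized solution of the homogeneous problem and that $\int_{\Omega^c}u\,dx$ be conserved; this is precisely what obliges us to drop \eqref{D} (nonzero constants violate the Dirichlet condition and the mean is not conserved) and to impose $b^i\nu_i=0$ on $\partial\Omega$ for \eqref{N}. Indeed, $\partial_ib^i=0$ and $c=0$ give $A\cdot 1=0$, the boundary operator satisfies $\partial(1)/\partial\nu=\nu_ib^i=0$, and the divergence theorem applied to $\int_{\Omega^c}Au\,dx$ leaves only boundary terms proportional to $\int_{\partial\Omega}(\partial u/\partial\nu)\,dS$ and $\int_{\partial\Omega}\nu_ib^i u\,dS$, both of which vanish under \eqref{N} and $b^i\nu_i=0$; hence $\frac{d}{dt}\int_{\Omega^c}u\,dx=\int_{\Omega^c}f\,dx$, exactly as in the free case, and since $T_\rho$ leaves the mean unchanged, $u_\rho$ is well defined and behaves as before. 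With these facts the proofs of Theorems~\ref{main.ls.theorem.res} and \ref{main.est.theorem.res} apply without change. I expect no genuine obstacle here: the only point requiring care is that, because $\rho$ vanishes on $\{|x|<2^M\}\supset\overline{\Omega}$, the localized equation $\tilde P(\rho u)=\rho f+[A,\rho]u$ is a bona fide equation on all of $\R^n$ with no boundary contribution, and that the two identities above are the sole places where the interior of $\Omega^c$, and with it the choice of boundary condition, actually enters.
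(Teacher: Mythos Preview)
Your proposal is correct and follows exactly the approach the paper has in mind; indeed the paper offers no formal proof, only the one-line observation preceding the corollary that the interior ball is treated as a black box where nothing beyond energy conservation is used, and the remark afterward that part (b) requires constants to solve the homogeneous boundary problem. You have simply fleshed out that remark, verifying the energy identity under \eqref{D} and \eqref{N} and, for part (b), checking that $b^i\nu_i=0$ is what makes constants satisfy the Neumann condition and makes the integration-by-parts in the $T_\rho$ computations boundary-free.
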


The more restrictive hypothesis in part (b) is caused by the 
requirement that constant functions solve the homogeneous
problem.

\subsubsection{Complex coefficients} 

The only role played in our proofs by the assumption
that the coefficients $b^i$ and $c$ are real  is to insure 
the energy conservation in the interior region. Hence 
we can allow complex coefficients in the region $\{ |x| > 2^{M+1}\}$
where the coefficients satisfy the smallness condition.

In addition, allowing $c$ to be complex in the interior region does
not affect energy conservation either, since we are assuming 
an a priori control of the local $L^2$ space-time norm of the solution.
Hence we have

\begin{remark}
a)  The results in Theorems~\ref{main.ls.theorem} and 
\ref{main.est.theorem} remain valid for complex coefficients
 $b^i$, $c$  with the restriction that $b^i$ are real  
in the region $\{ |x| < 2^{M+1}\}$.

b) The results in Theorems~ \ref{main.ls.theorem.res} and
\ref{main.est.theorem.res} remain valid for coefficients $b^i$ which
are real in the region $\{ |x| < 2^{M+1}\}$.
\end{remark}

\subsection{ Non-trapping metrics}
The second goal of the article is to consider the previous setup but
with an additional non-trapping assumption. To state it we consider
the Hamilton flow $H_a$ for the principal symbol of the operator 
$A$, namely 
\[
a(t,x,\xi) = a^{ij}(t,x) \xi_i \xi_j.
\]
The spatial projections of the trajectories of the Hamilton flow $H_a$
are the geodesics for the metric $a_{ij} d x^i dx^j$ where
$(a_{ij})=(a^{ij})^{-1}$.

\begin{deff}
  We say that the metric $(a_{ij})$ is non-trapping if for each $R > 0$
  there exists $L > 0$ independent of $t$ so that any portion of a
  geodesic contained in $\{|x| < R\}$ has length at most $L$.
\end{deff}

The non-trapping condition allows us to use standard propagation of
singularities techniques to bound high frequencies inside a ball in
terms of the high frequencies outside.  Then the cutoff function
$\rho$ which was used before is no longer needed, and we obtain
\begin{theorem}
  \label{main.ls.theoremnt}
  Let $R > 0$ be sufficiently large.  Assume that the coefficients $a^{ij}$, $b^i$ and $c$ are real and
  satisfy \eqref{coeff}, \eqref{coeffb}, \eqref{coeffc}.  Assume also
  that the metric $a_{ij}$ is non-trapping.  Then the solution $u$ to
  \eqref{main.equation} satisfies
\begin{equation}
  \label{lsnt}
\| u\|_{\tX} \lesssim \|u_0\|_{L^2}
+ \|f\|_{\tX'} + 
 \|u\|_{L^2_{t,x}(\{|x|\le 2R\})},
\end{equation}
\end{theorem}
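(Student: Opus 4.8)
The plan is to deduce \eqref{lsnt} from the exterior estimate \eqref{lsext} of Theorem~\ref{main.ls.theorem} by supplying, via the non-trapping hypothesis, control of the high-frequency part of $u$ inside a large ball, the interior and exterior pieces being coupled only through the thin region where $\nabla\rho$ lives. Fix $R$ large (relative to $M$, hence to $\kappa$) with $2R\ge 2^{M+3}$, so that $\rho\equiv 1$ on $\{|x|\ge 2R\}$, and a cutoff $\chi$ equal to $1$ on $\{|x|\le 2^{M+1}\}$ and supported in $\{|x|\le 2^{M+2}\}$. After a partition of unity one has $\|u\|_{\tX}\lesssim \|\rho u\|_{\tX}+\|\chi u\|_{\tX}$ up to commutator terms supported in $\{2^M\le|x|\le 2^{M+2}\}$ and rapidly decaying tails that see $u$ only there. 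The energy identity (exact here, since $A$ is formally self-adjoint for real coefficients) bounds $\|u\|_{L^\infty_tL^2_x}$ by $\|u_0\|_{L^2}+\|f\|_{\tX'}+\delta\|u\|_{\tX}$ for any $\delta>0$; and the small-coefficient machinery of \cite{T}, applied to $\rho u$ (whose source is $\rho f+[A,\rho]u$, with $[A,\rho]u$ first order, supported where $\chi\equiv 1$, and carrying a factor $\|\nabla\rho\|_{L^\infty}\sim 2^{-M}$), bounds $\|\rho u\|_{\tX}$ by $\|u_0\|_{L^2}+\|f\|_{\tX'}+2^{-M}\|\chi u\|_{\tX}$. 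Thus everything reduces to the high-frequency interior bound
\[
\sum_{k>0}2^k\|\chi S_k u\|_{L^2_{t,x}}^2 \ \lesssim\ \|u_0\|_{L^2}^2+\|f\|_{\tX'}^2+\|u\|_{L^\infty_tL^2_x}^2+\|\rho u\|_{\tX}^2+\|u\|_{L^2_{t,x}(\{|x|\le 2R\})}^2+\delta\|u\|_{\tX}^2 ,
\]
whose right side, once the couplings above are assembled with $M$ and $R$ large enough, is controlled by the right side of \eqref{lsnt}; the frequencies $k\le 0$ of $\chi u$ are dominated directly by $\|u\|_{L^2_{t,x}(\{|x|\le 2R\})}$.

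For the displayed bound I would run a positive commutator argument at each frequency $2^k$, $k>0$. By \eqref{coeff} the trapped rays are confined to a compact core, and the non-trapping hypothesis — applied to the metric $a(t,\cdot)$ frozen at each $t$, with the length bound uniform in $t$ — yields, as in the work of Doi~\cite{Doi} and of Bouclet--Tzvetkov~\cite{BT}, an escape symbol $q(x,\xi)$, homogeneous of degree zero in $\xi$, compactly supported in $\{|x|\le 2^{M+3}\}$, with $H_{a(t)}q\ge c_0>0$ on $\{|x|\le 2^{M+2}\}\times\{|\xi|=1\}$ for all $t$ and with the negative part of $H_{a(t)}q$ confined to an annulus $\{2^{M+2}\le|x|\le 2^{M+3}\}\subset\{\rho\equiv 1\}$. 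Let $Q_k$ be the order-zero operator with symbol $q$ restricted to $|\xi|\sim 2^k$. Writing $u_k=S_k u$, so $(D_t+A)u_k=S_k f+[A,S_k]u$, differentiating $\langle Q_k u_k,u_k\rangle$ in $t$, using the equation and the self-adjointness of $A$, integrating over $[0,T]$ and applying a Gårding inequality gives
\[
c_0\,2^k\|\chi u_k\|_{L^2_{t,x}}^2\ \lesssim\ \|u_k(0)\|_{L^2}^2+\|u_k(T)\|_{L^2}^2+2^k\|\widetilde\chi u_k\|_{L^2_{t,x}}^2+\Bigl|\int_0^T\langle Q_k(S_k f+[A,S_k]u),u_k\rangle\,dt\Bigr|+E_k ,
\]
with $\widetilde\chi$ supported in the annulus above and $E_k$ gathering the time-derivative term $\langle(\partial_t Q_k)u_k,u_k\rangle$, the subprincipal and Gårding errors (usable for $C^2$ coefficients via paradifferential calculus), and the order-one commutator $[A,S_k]$, whose derivative loss is paid for by the weight and the localization. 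Summing over $k>0$: the endpoints give $\|u_0\|_{L^2}^2+\|u\|_{L^\infty_tL^2_x}^2$; on the annulus $\rho\equiv 1$, so $\sum_k 2^k\|\widetilde\chi u_k\|_{L^2_{t,x}}^2\lesssim \|\rho u\|_{\tX}^2+\|u\|_{L^2_{t,x}(\{|x|\le 2R\})}^2$; and the source pairing is handled by duality against the $X_k$-norms — splitting $f\in\tX'$ into its $\dX'$ and weighted-$L^2$ parts and treating $[A,S_k]u$ with the $\tX$ structure — followed by Cauchy--Schwarz and AM--GM, leaving an absorbable multiple of $\|u\|_{\tX}^2+\sum_{k>0}2^k\|\chi u_k\|_{L^2_{t,x}}^2$.

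The main obstacle is the escape-symbol construction and its use at the regularity of this paper. First, the coefficients are merely $C^2$ and time dependent, and \eqref{coeff} controls $\partial_x^2 a$ and $\partial_t a$ but not the mixed derivative $\partial_t\partial_x a$, so the geodesic flow of $a(t,\cdot)$ is only $C^1$ and the naive $t$-dependent escape symbol built from it need not be differentiable in $t$, which is precisely what the term $\langle(\partial_t Q_k)u_k,u_k\rangle$ requires; the natural remedy is to build a single time-independent symbol serving every frozen metric $a(t,\cdot)$ at once — feasible because \eqref{coeff} makes $\{a(t,\cdot)\}_t$ precompact in $C^1$ on compact sets, so finitely many flow-based escape symbols (uniform non-trapping keeping the number finite) can be patched into one — or to mollify $a$ in $x$ and $t$ at a frequency-dependent scale and estimate the errors. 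Second, all constants must be kept uniform, and in particular the $2^M$-dependent losses produced in the interior must be weighed against the $2^{-M}$ gain in the coupling to the exterior estimate, which is why $R$ (hence $M$) must be taken sufficiently large; this is the point at which the hypotheses force quantitative bookkeeping. Carrying out the positive commutator for $C^2$ coefficients by paradifferential calculus is a secondary, now-standard ingredient, and the remainder — reassembling the frequency-localized pieces into the $\tX$ and $\tX_e$ norms — parallels \cite{T}.
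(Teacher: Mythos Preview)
Your overall architecture --- combine the exterior estimate of Theorem~\ref{main.ls.theorem} with an interior high-frequency bound coming from a positive-commutator argument with an escape symbol --- is exactly the paper's. But there is a real gap in the way you couple the two pieces.

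The claimed factor $2^{-M}$ in front of $\|\chi u\|_{\tX}$ does not exist. When you apply the small-perturbation result of \cite{T} to $\rho u$, the commutator $[A,\rho]u$ is first order in $u$, supported in the annulus $\{|x|\sim 2^M\}$, with coefficient $|\nabla\rho|\sim 2^{-M}$. But the spatial weights in $\tX$ and $\tX'$ scale oppositely: on $D_M$ one has $\|\cdot\|_{X_k}\sim 2^{-M/2}\|\cdot\|_{L^2}$ and $\|\cdot\|_{X'_k}\sim 2^{M/2}\|\cdot\|_{L^2}$, so a direct computation gives
\[
\|[A,\rho]u\|_{\tX'}^2 \ \sim\ \sum_k 2^k\,2^{-M}\|S_k u\|_{L^2(D_M)}^2 \ \sim\ \|u\|_{\tX(\text{annulus})}^2,
\]
with no small prefactor; the $2^{-M}$ from $\nabla\rho$ is exactly cancelled. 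Hence the bootstrap you describe (``$2^M$-dependent losses in the interior weighed against a $2^{-M}$ gain in the coupling'') cannot close as written.

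The fix is simply to use \eqref{lsext} as a black box rather than re-deriving it. Its error term is $\|u\|_{L^2_{t,x}(\{|x|\le 2^{M+1}\})}$ --- a local $L^2$ norm of $u$, with no derivative and no $\tX$-type quantity --- so there is no circularity whatsoever: \eqref{lsext} bounds $\|\rho u\|_{\tX}$ outright, and then your interior estimate (which is allowed to have $\|\rho u\|_{\tX}$ on its right side, with any $M$-dependent constant) finishes the job. The reason \eqref{lsext} enjoys such a benign error is precisely the sign condition \eqref{com.high} in Lemma~\ref{lemma.high.freq}: the metric is inserted into the high-frequency multiplier $Q_k$ so that its interaction with the cutoff $\rho$ produces a term of favorable sign, which is then \emph{dropped} rather than estimated. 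Your naive commutator bound misses this cancellation.

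Two smaller remarks. First, the paper carries out the interior step more cleanly than your frequency-by-frequency version: it applies the escape multiplier directly to the compactly supported function $v=(1-\rho)u$ and states the resulting bound in $L^2_t H^{1/2}_x$ (estimate \eqref{leint}), so a single G\aa rding step replaces all your dyadic bookkeeping. Second, your worry about the time dependence of the escape symbol, and your proposed cure (build one time-independent $q\in S^0_{hom}$ serving all frozen metrics, using precompactness of $\{a(t,\cdot)\}_t$ in $C^1_{loc}$), are both on target; the paper records exactly this as Proposition~\ref{Doi_construction}, citing \cite{D1} and \cite{ST}.
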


respectively 

\begin{theorem}
  \label{main.ls.theoremnt.res}
  Let $R > 0$ be sufficiently large, and let $n=1,2$.  Assume that the coefficients of $P$ are as
  in Theorem~\ref{main.ls.theorem.res}.  Assume also that the metric
  $a_{ij}$ is non-trapping.  Then the solution $u$ to
  \eqref{main.equation} satisfies
\begin{equation}
  \label{lsntres}
\| u\|_{\dX} \lesssim \|u_0\|_{L^2}
+ \|f\|_{\dX'} + 
 \|u-u_\rho\|_{L^2_{t,x}(\{|x|\le 2R\})}.
\end{equation}
\end{theorem}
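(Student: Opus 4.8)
\emph{Sketch of the argument.} The strategy is to combine the exterior estimate of Theorem~\ref{main.ls.theorem.res}, which treats the ball $\{|x|\le 2^M\}$ as a black box, with a non-trapping positive commutator argument that recovers the local smoothing norm \emph{inside} the ball at high frequencies. Fix $M=M(\e)$ so that \eqref{coeff.M}--\eqref{coeffc.M} hold and choose $R$ with $2^{M+2}\le 2R$, so that $\rho\equiv 1$ on $\{|x|\ge 2^{M+1}\}$ and both the transition region of $\rho$ and the error region of \eqref{lsext.res} lie in $\{|x|\le 2R\}$. Decomposing $u=T_\rho u+(1-\rho)(u-u_\rho)$, Theorem~\ref{main.ls.theorem.res} bounds $\|T_\rho u\|_{\dX}$ by the right side of \eqref{lsntres}. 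Since $S_k$ annihilates (time dependent) spatial constants we have $S_k(u-u_\rho)=S_k u$, hence $S_k\bigl((1-\rho)(u-u_\rho)\bigr)=(1-\rho)S_k u+[S_k,\rho](u-u_\rho)$; the commutator term is of lower order and localized near $\supp\nabla\rho\subset\{|x|\le 2R\}$, hence controlled by $\|u-u_\rho\|_{L^2_{t,x}(\{|x|\le 2R\})}$ after the $2^k$-summation, and the bounded frequency part of $\sum_k 2^k\|(1-\rho)S_k u\|_{L^2_{t,x}}^2$ is likewise part of the error term (using the quotient structure of $\dX$ and \eqref{Hardylow}). We are thus reduced to the high frequency interior estimate
\[
\sum_{2^k\gg 1} 2^k\|S_k u\|^2_{L^2_{t,x}(\{|x|\le 2R\})}\ \lesssim\ \|u_0\|^2_{L^2}+\|f\|^2_{\dX'}+\|u-u_\rho\|^2_{L^2_{t,x}(\{|x|\le 2R\})}.
\]

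Fix such a $k$ and set $v=S_k u$, which solves $(D_t+A)v=S_k f+[A,S_k]u$. Replace the principal symbol $a(t,x,\xi)=a^{ij}(t,x)\xi_i\xi_j$ by a Littlewood--Paley regularization $a_{\#}$ at frequency scale $2^k$; by the $C^2$ bound in \eqref{coeff} the difference $a-a_{\#}$ is of lower order on the annulus $|\xi|\sim 2^k$, and $a_{\#}$ remains non-trapping on the bounded length scale $L=L(2R)$ furnished by the hypothesis. Since the geodesics of $a_{\#}$ spend time at most $L$, uniformly in $t$, inside $\{|x|<2R\}$, a Doi--Morawetz type construction yields a time dependent escape function $q=q_k(t,x,\xi)$, a classical symbol of order $0$ with $x$-support in $\{|x|\le 4R\}$, such that
\[
\partial_t q+\{a_{\#},q\}\ \gtrsim\ \langle\xi\rangle\,\chi_{\{|x|\le 2R\}}-C\langle\xi\rangle\,\chi_{\{2R\le|x|\le 4R\}}-C\langle\xi\rangle^{-1}.
\]

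Set $Q=\operatorname{Op}^w(q)$, which is $L^2$-bounded uniformly in $k$. Integrating the identity
\[
\la Qv,v\ra\,\Big|_{t=0}^{t=T}=\int_0^T\la(\partial_t Q+i[A,Q])v,v\ra\,dt-2\int_0^T\operatorname{Im}\la Q(S_k f+[A,S_k]u),v\ra\,dt
\]
over $[0,T]$, the boundary terms are $O(\|v(0)\|_{L^2}^2+\|v(T)\|_{L^2}^2)=O(\|u_0\|_{L^2}^2)$, since under the hypotheses $c=0$, $\partial_i b^i=0$, $b^i$ real the operator $A$ is self-adjoint and the $L^2$ norm is conserved. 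By the sharp G\aa rding inequality the principal part of the main term is bounded below by $c 2^k\|v\|^2_{L^2_{t,x}(\{|x|\le 2R\})}$ minus $C 2^k\|v\|^2_{L^2_{t,x}(\{2R\le|x|\le 4R\})}$ minus lower order contributions. The negative annular term lies in $\{\rho=1\}$, so that $\sum_k C 2^k\|v\|^2_{L^2_{t,x}(\{2R\le|x|\le 4R\})}\lesssim \|\rho u\|_{\dX}^2+\|u-u_\rho\|^2_{L^2_{t,x}(\{|x|\le 2R\})}$ is already controlled by the first paragraph; the lower order contributions -- the first order part of $A$ (which commutes with $Q$ to order $0$), the symbol calculus remainders, the regularization error $a-a_{\#}$, and $[A,S_k]u$ -- are all a factor $2^{-k}$ (or better, by the decay in \eqref{coeff}, \eqref{coeffb}) below the main term, hence for $k$ above a fixed threshold are either absorbed into $c 2^k\|v\|^2_{L^2_{t,x}(\{|x|\le 2R\})}$ with a small constant or summable against the already controlled quantities; the forcing is handled by $\dX$--$\dX'$ duality. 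Rearranging, summing over $k$ above the threshold, and combining with the bounded frequency piece from the first paragraph closes the estimate.

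The crux of the matter is making this commutator argument rigorous for a \emph{time dependent} metric of only $C^2$ regularity: this is what forces the truncation $a\mapsto a_{\#}$ and a careful ledger of which error terms are genuinely of lower order (hence absorbable once $k$ is large) versus which must be routed back through Theorem~\ref{main.ls.theorem.res}; one must also check that an $O(2^{-2k})$ perturbation of the metric does not spoil the non-trapping property on the bounded length scale relevant to $q$, and that $q$ can be built uniformly in $t$ -- which is precisely what the $t$-independence of $L$ in the non-trapping definition provides, together with the decay of $\partial_t a$ and $\partial_x^2 a$ in \eqref{coeff}. A secondary difficulty, specific to the resonant case, is the bookkeeping that keeps every local $L^2$ error expressed through $u-u_\rho$ rather than $u$ itself, so that the estimate is not rendered vacuous by the constant solutions; this rests on the quotient structure of $\dX$, the inequality \eqref{Hardylow}, and the vanishing of $S_k$ on spatial constants.
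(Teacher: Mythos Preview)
Your high-level strategy matches the paper's: combine the exterior estimate of Theorem~\ref{main.ls.theorem.res} with an interior positive commutator argument exploiting non-trapping, and route all local errors through $u-u_\rho$. However, the implementation you sketch is considerably more elaborate than what the paper actually does, and some of the complications you flag as ``the crux'' are sidestepped entirely.

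The paper does not perform the interior estimate frequency by frequency on $S_k u$. Instead it sets $v=(1-\rho)(u-u_\rho)$, computes $Pv$ explicitly (including the contribution from $D_t u_\rho$, which produces only harmless localized terms), and applies a single commutator bound (Proposition containing \eqref{leint}) to the compactly supported function $v$: for $v$ supported in $\{|x|\le 2^{M+1}\}$,
\[
\|v\|_{L^2_t H^{1/2}_x}^2 \lesssim \|v\|_{L^\infty_t L^2_x}^2 + \|g_1\|_{L^1_t L^2_x}^2 + \|g_2\|_{L^2_t H^{-1/2}_x}^2 + \|v\|_{L^2_{t,x}}^2.
\]
The multiplier used is a \emph{time-independent} homogeneous symbol $q\in S^0_{hom}$ with $H_a q\gtrsim |\xi|$ on $\{|x|\le 2^{M+1}\}$, taken directly from Doi's construction (Proposition~\ref{Doi_construction}); there is no frequency-dependent regularization $a_{\#}$, no per-$k$ escape function, and no need to verify that an $O(2^{-2k})$ metric perturbation preserves non-trapping. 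The limited regularity of the coefficients is handled by appealing to standard results for rough pseudodifferential calculus (Taylor). Because $v$ is compactly supported, one recovers the full interior $\dX$ contribution in a single stroke, rather than summing a dyadic family of estimates and tracking commutators $[S_k,\rho]$, $[A,S_k]$ separately. Your route would work, but it is heavier than necessary; the paper's spatial truncation before the commutator step is what buys the simplification. Finally, the reduction of the error term to the form $\|u-u_\rho\|_{L^2_{t,x}(\{|x|\le 2R\})}$ is done via the lemma \eqref{dXerror}, not through \eqref{Hardylow} as you suggest.
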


We note that the high frequencies in the error term on the right are
controlled by the $\dX$ norm on the left. Also the low frequencies ($\ll
1$) are controlled by the $\dX$ norm using the uncertainty
principle. Hence the only nontrivial part of the error term corresponds
to intermediate (i.e. $\approx 1$ ) frequencies.

The proof combines the arguments used for the exterior estimates
with a standard multiplier construction from the theory of propagation
of singularities. Adding to  the above results  the parametrix
obtained in \cite{T} we obtain

\begin{theorem}
  \label{main.theoremnt}
  Let $R > 0$ be sufficiently large.  Assume that the coefficients $a^{ij}$, $b^i$ and
  $c$ are real and satisfy \eqref{coeff}, \eqref{coeffb}, \eqref{coeffc}.  Assume
  also that the metric $a_{ij}$ is non-trapping.  Then for any two
  Strichartz pairs $(p_1,q_1)$ and $(p_2, q_2)$, the solution $u$ to
  \eqref{main.equation} satisfies
\begin{equation}
  \label{fsent}
\| u\|_{\tX \cap L^{p_1}_t L^{q_1}_x} \lesssim \|u_0\|_{L^2}
+ \|f\|_{\tX'+L^{p'_2}_t L^{q'_2}_x} + 
 \|u\|_{L^2_{t,x}(\{|x|\le 2R\})},
\end{equation}
\end{theorem}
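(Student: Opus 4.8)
The plan is to derive \eqref{fsent} by combining three inputs: the non--trapping local smoothing estimate of Theorem~\ref{main.ls.theoremnt}, the exterior Strichartz estimate of Theorem~\ref{main.est.theorem}, and a propagation of singularities argument that reclaims the interior of the ball. Write $\mathcal{M}$ for the right--hand side of \eqref{fsent}. First I would invoke Theorem~\ref{main.ls.theoremnt} together with the $L^2$ energy identity (legitimate since the coefficients are real, so the evolution is unitary on $L^2$) and the \cite{T} parametrix (which handles the $L^{p_2'}_t L^{q_2'}_x$ part of $f$ and produces the $L^\infty_t L^2_x$ norm, exactly as in the proof of Theorem~\ref{main.est.theorem}), obtaining $\| u\|_{\tX \cap L^\infty_t L^2_x} \lesssim \mathcal{M}$. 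This already settles the $\tX$ and energy contributions to the left of \eqref{fsent}, and it means that in the rest of the argument $\|u\|_{\tX \cap L^\infty_t L^2_x}$ is freely available on the right. Next, choosing $R$ large enough that the support of $1-\rho$, the ball $\{|x| \le 2^{M+1}\}$, and (by \eqref{coeff}) the trapped set all lie in $\{|x| < R\}$, I split $u = \rho u + (1-\rho)u$. For the exterior piece one quotes Theorem~\ref{main.est.theorem} directly: its right--hand side is $\lesssim \mathcal{M}$, and $\|\rho u\|_{L^{p_1}_t L^{q_1}_x} \le \|u\|_{\tX_e(p_1,q_1)}$, so $\|\rho u\|_{L^{p_1}_t L^{q_1}_x} \lesssim \mathcal{M}$. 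In the low dimensional resonant case one works instead with the modified truncation $T_\rho$ and the spaces $\dX_e$ and quotes Theorem~\ref{main.est.theorem.res}; I suppress this distinction below.

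It then remains to bound the interior piece $\|(1-\rho)u\|_{L^{p_1}_t L^{q_1}_x}$, a space--time norm over the fixed compact region $\{|x| \le 2^{M+1}\}$, which I would estimate Littlewood--Paley block by block. For frequencies $\lesssim 1$ the frequency is bounded, so Bernstein's inequality bounds $L^{q_1}_x$ by $L^2_x$ over the compact region and interpolating $\|u\|_{L^\infty_t L^2_x} \lesssim \mathcal{M}$ with the localized square integrability recovers the $L^{p_1}_t$ integrability; here the frequencies $\ll 1$ are in fact already controlled by $\|u\|_{\tX}$ through the uncertainty principle, and the frequencies $\approx 1$ by the a priori local $L^2$ term.

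For the high frequencies this crude argument loses (a power $2^{k/p_1}$ per dyadic block), and here the non--trapping hypothesis enters. As in the proof of Theorem~\ref{main.ls.theoremnt}, I would construct a pseudodifferential multiplier quantizing an escape function for the Hamilton flow $H_a$ on $\{|x| < R\}$---such a function exists precisely because $a_{ij}$ is non--trapping, so any geodesic segment in $\{|x|<R\}$ has length at most a fixed $L$---and run a positive commutator identity with it on each Littlewood--Paley block $S_k u$ over time intervals of the natural length $\sim L 2^{-k}$. Microlocally this transports the block's space--time mass out of the ball within bounded time; combining with the local--in--time variable coefficient Strichartz estimates over such intervals (available since $a_{ij}$ is non--trapping) and summing in $\ell^{p_1}$ in time and $\ell^2$ in frequency yields a bound of $\|(1-\rho)u\|_{L^{p_1}_t L^{q_1}_x}$ by $\|\rho u\|_{L^{p_1}_t L^{q_1}_x(\R \times \{R < |x| < 2R\})} + \|f\|_{L^{p_2'}_t L^{q_2'}_x} + \|u\|_{\tX \cap L^\infty_t L^2_x}$, each term of which is $\lesssim \mathcal{M}$ by the exterior estimate above and by Theorem~\ref{main.ls.theoremnt}. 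Adding the two pieces gives \eqref{fsent}.

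The difficult step is this last one. One must run the escape--function multiplier argument globally in time, so that the transfer of high frequency mass from $\{|x| < R\}$ out to $\{R < |x| < 2R\}$ comes with constants independent of $T$, and one must interface it cleanly both with the outgoing \cite{T} parametrix---which is only directly effective in the region of small coefficients---and with the weighted local smoothing spaces $\tX$, $\tX'$. In particular one has to verify that the geometric loss caused by near--trapped geodesics is confined to the intermediate frequencies $\approx 1$, where it is absorbed by the a priori $L^2$ error term, and that the low dimensional resonant case goes through with $T_\rho$ and $\dX_e$ replacing $\rho$ and $\tX_e$.
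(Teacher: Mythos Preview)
Your overall strategy is workable, but it takes a substantially longer route than the paper and the ``difficult step'' you flag at the end is one the paper simply does not face.

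The paper's argument is this. Decompose the forcing as $Pu = f + g$ with $f \in \tX'$ and $g \in L^{p_2'}_t L^{q_2'}_x$, let $K$ be the global parametrix of Theorem~\ref{prop.Tataru.parametrix.nr}, and set $v = u - Kg$. The parametrix bounds \eqref{kft}, \eqref{lperror2bt} together with \eqref{bc} give $\|v(0)\|_{L^2} + \|Pv\|_{\tX'} \lesssim \mathcal M$; then Theorem~\ref{main.ls.theoremnt} applied to $v$ yields $\|v\|_{\tX \cap L^\infty_t L^2_x} \lesssim \mathcal M + \|v\|_{L^2_{t,x}(\{|x|\le 2R\})}$. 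Up to here this is your first step. The point of departure is that the paper now applies Theorem~\ref{prop.Tataru.xtolp.nr} directly to $v$: that result says, for any solution, that $\|v\|_{L^{p_1}_t L^{q_1}_x} \lesssim \|v\|_{\tX \cap L^\infty_t L^2_x} + \|Pv\|_{\tX'}$, globally in space and time. One then transfers back from $v$ to $u$ using \eqref{kft} for $Kg$. That is the entire proof.

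So there is no interior/exterior decomposition of $u$, no appeal to Theorem~\ref{main.est.theorem}, no Littlewood--Paley case analysis, no local-in-time Strichartz on semiclassical intervals, and no second propagation-of-singularities argument. The non-trapping hypothesis is used exactly once, inside Theorem~\ref{main.ls.theoremnt}, to produce the global $\tX$ bound; once that bound is in hand, the ``local smoothing $\Rightarrow$ Strichartz'' black box Theorem~\ref{prop.Tataru.xtolp.nr} (which holds for \emph{arbitrary} large $\kappa$, trapping or not, because it is a pure parametrix/duality statement) delivers the full Strichartz norm everywhere. Your interior argument is in effect a hands-on rederivation, in this special setting, of what Theorem~\ref{prop.Tataru.xtolp.nr} already packages; it can be made to work along Burq--G\'erard--Tzvetkov/Staffilani--Tataru lines, but it buys nothing extra here and costs the delicate interfacing you describe in your last paragraph.
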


respectively 

\begin{theorem}
  \label{main.theoremnt.res}
 Let $n = 1,2$, and let $R > 0$ be sufficiently large.  Assume that the coefficients of $P$ are as
  in Theorem~\ref{main.ls.theorem.res}. Assume also that the metric
$a_{ij}$ is non-trapping. Then for any two Strichartz pairs $(p_1,q_1)$
and $(p_2, q_2)$, the solution $u$ to \eqref{main.equation} satisfies
\begin{equation}
  \label{fsentres}\| u\|_{\dX\cap L^{p_1}_t L^{q_1}_x} \lesssim \|u_0\|_{L^2}
  + \|f\|_{\dX'+L^{p'_2}_t L^{q'_2}_x} 
  +  \| u - u_\rho\|_{L^2_{t,x}(\{|x|\le 2R\})}.
\end{equation}
\end{theorem}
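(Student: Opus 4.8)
The plan is to derive \eqref{fsentres} by combining the local smoothing estimate of Theorem~\ref{main.ls.theoremnt.res} with the outgoing parametrix of \cite{T} and a propagation-of-singularities argument in the interior that exploits the non-trapping hypothesis; the non-resonant Theorem~\ref{main.theoremnt} will follow from the same scheme with $\tX$ in place of $\dX$ and without the $u_\rho$ correction. Observe first that the left side of \eqref{fsentres} is the sum of the local smoothing norm $\|u\|_{\dX}$ and the Strichartz norm $\|u\|_{L^{p_1}_tL^{q_1}_x}$, and that the former is exactly the content of Theorem~\ref{main.ls.theoremnt.res}, whose right side is precisely the one in \eqref{fsentres}. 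So it remains to bound the Strichartz norm. All the resonant bookkeeping --- using $\dX$ in place of $\tX$, working modulo constants, subtracting the local average $u_\rho$, and invoking $\div b = 0$ and $c=0$ so that constants solve the homogeneous equation and the quotient structure is preserved by the flow --- is to be carried out exactly as in the proofs of Theorems~\ref{main.ls.theorem.res} and \ref{main.est.theorem.res}, and I would suppress it below.

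I would split $u = \rho u + (1-\rho)u$ with $\rho$ as in the definition of $\dX_e$, enlarging $R$ if necessary so that $\{|x|\le 2^{M+1}\}\subset\{|x|\le 2R\}$. For the exterior piece, Theorem~\ref{main.est.theorem.res} applied with the pairs $(p_1,q_1)$ and $(p_2,q_2)$ bounds $\|\rho u\|_{L^{p_1}_tL^{q_1}_x}$ by the right side of \eqref{fsentres}, modulo the routine compatibility between the source and error norms used there and here. This is where the parametrix of \cite{T} enters, in the region $\{|x|>2^M\}$ on which the smallness bounds \eqref{coeff.M}--\eqref{coeffc.M} hold and the parametrix errors are absorbed by the local smoothing norm.

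It then remains to estimate $v := (1-\rho)u$, which is spatially supported in $\{|x|\le 2^{M+1}\}$, in $L^{p_1}_tL^{q_1}_x$, and here I would decompose $v = \sum_k S_k v$ in frequency. The finitely many intermediate frequencies $2^k\approx1$ reduce, via Bernstein's inequality, to $\|S_k v\|_{L^2_{t,x}(\{|x|\le 2^{M+1}\})}$, which --- modulo the portion controlled by $\|u\|_{\dX}$ --- is dominated by the error term $\|u-u_\rho\|_{L^2_{t,x}(\{|x|\le 2R\})}$. The low frequencies $2^k\ll1$ I would handle by the global low-frequency dispersive bounds furnished by the parametrix of \cite{T}, which at frequency $\ll1$ effectively sees only the asymptotically flat regime, the weighted first term of the $\dX$-norm (its $\ln$-weighted analogue for $n=2$) supplying the extra $L^2$ control needed near the resonance. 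For the high frequencies $2^k\gg1$ I would invoke the non-trapping hypothesis: the multiplier / propagation-of-singularities construction already used in the proof of Theorem~\ref{main.ls.theoremnt.res} transfers the high-frequency mass of $u$ in $\{|x|\le 2^{M+1}\}$ into the exterior region, where it has just been estimated, up to a remainder absorbed into $\|u\|_{\dX}$. To convert this into a genuine Strichartz bound on the ball I would then run the local-in-time variable-coefficient Strichartz estimates of \cite{ST,HTW,RZ} --- valid precisely because $a_{ij}$ is non-trapping --- on unit time intervals and glue them over $[0,T]$ by Duhamel's formula, estimating the errors created by cutting in time, uniformly in $T$, by the global local smoothing norm $\|u\|_{\dX}$ from Theorem~\ref{main.ls.theoremnt.res}; this is the mechanism by which \cite{T} promotes local smoothing to Strichartz, now carried out inside the compact region.

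The hard part will be this last global-in-time gluing in the interior. Local-in-time Strichartz on the ball is classical under the non-trapping condition, but a naive concatenation over infinitely many time slabs fails, since energy conservation gives no decay; the remedy is to feed the time-cutting errors into the global local smoothing estimate, and making this work forces the frequency-localized propagation multiplier to be compatible with the weighted $\dX$-structure and, in the resonant case, with the $u_\rho$ correction near frequency one. A secondary technical point is the uniform-in-$T$ treatment of the low frequencies, where the $\dX$-norm degenerates, so one must lean on the low-frequency dispersion of the exterior parametrix rather than on local smoothing alone.
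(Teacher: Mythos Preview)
Your strategy is substantially more elaborate than what the paper does, and the extra work stems from a misapprehension about the scope of the parametrix in \cite{T}. You treat the parametrix as available only in the exterior region $\{|x|>2^M\}$ where the smallness conditions \eqref{coeff.M}--\eqref{coeffc.M} hold, and this forces you into an interior/exterior splitting and a delicate global-in-time gluing of local Strichartz estimates on the ball. But Theorem~\ref{prop.Tataru.parametrix.2} (and its consequence Theorem~\ref{prop.Tataru.xtolp}) requires only the asymptotic flatness \eqref{coeff} with arbitrary $\kappa$, not smallness; the parametrix is global. The paper therefore needs no spatial splitting at all: one sets $v=u-Kg$ with $K$ the global parametrix, applies Theorem~\ref{main.ls.theoremnt.res} to $v$ to get $\|v\|_{\dX\cap L^\infty_tL^2_x}$ and $\|Pv\|_{\dX'}$ under control, and then invokes the black-box implication
\[
\|v\|_{L^{p_1}_tL^{q_1}_x}\lesssim \|v\|_{\dX\cap L^\infty_tL^2_x}+\|Pv\|_{\dX'}
\]
from Theorem~\ref{prop.Tataru.xtolp}. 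Adding back $Kg$ via \eqref{kf} finishes the proof in a few lines.

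By contrast, the step you flag as ``the hard part''---gluing unit-time-interval Strichartz estimates over $[0,T]$ uniformly in $T$ by absorbing the Duhamel errors into $\|u\|_{\dX}$---is not actually carried out in your proposal, and it is essentially the content of Theorem~\ref{prop.Tataru.xtolp} itself. So your outline defers the main analytic input rather than supplying it. Your frequency trichotomy for $(1-\rho)u$ and the appeal to \cite{ST,HTW,RZ} are unnecessary once you recognize that local smoothing plus the global parametrix already delivers Strichartz directly.
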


\subsubsection{ An improved result for trapped metrics}

A variation on the above theme is obtained in the case when there are
trapped rays, but not too many. If they exist, they must be confined
to the interior region $\{|x| \leq 2^M\}$. Then we can define the
conic set
\[
\begin{split}
  \Omega_{trapped}^L = \{&\ (t,x,\xi) \in \R \times T^* B(0,2^M);
  \text{ the $H_a$ bicharacteristic through } (t,x,\xi)
\\ & \text{ has
    length at least } L \text{ within } |x| \leq 2^M\}.
\end{split}
\]
Given a smooth zero homogeneous symbol $q(x,\xi)$  which 
equals $1$ for $|x| > 2^M$, we define modified exterior spaces
by
\[
\| u\|_{\tX_q}  = \| q(x,D) u\|_{\tX} + \| u\|_{L^2(\{|x| \leq 2^{M+1}\})} 
\]
with similar modifications for $\tX'_q$, $\dX_q$ and $\dX'_q$.

Then the same argument as in the proof of the above Theorems
gives

\begin{corr} 
  Assume that $q$ is supported outside $ \Omega_{trapped}^L$ for some
  $L > 0$. Then the  results in Theorems~\ref{main.ls.theorem},
  \ref{main.est.theorem}, \ref{main.ls.theorem.res} and
  \ref{main.est.theorem.res} remain valid with $\tX_e$, $\tX'_e$,
  $\dX_e$ and $\dX'_e$ replaced by $\tX_q$, $\tX'_q$, $\dX_q$ and
  $\dX'_q$.
\end{corr}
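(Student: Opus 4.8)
The plan is to reuse, essentially verbatim, the scheme of Theorems~\ref{main.ls.theoremnt}--\ref{main.theoremnt.res}, with the microlocalization encoded by $q$ playing the role of the non-trapping hypothesis. Those theorems are obtained in two stages. First, the exterior estimates of Theorems~\ref{main.ls.theorem}--\ref{main.est.theorem.res} already give the full conclusion modulo the spatially localized error $\|u\|_{L^2_{t,x}(\{|x| \le 2^{M+1}\})}$ (or its resonant analogue $\|u - u_\rho\|_{L^2_{t,x}(\{|x|\le 2^{M+1}\})}$); since the symbol of $q(x,D)-I$ is supported in $\{|x| \le 2^M\}$, the exterior part of the $\tX_q$ (resp. $\dX_q$) norm agrees modulo lower order terms with that of $\tX$ (resp. $\dX$), so this stage transfers with no change. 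Second, a positive commutator / propagation of singularities argument is used to upgrade the interior $L^2$ error into a quantity already controlled by the left hand side at frequencies $\approx 1$ — the high frequencies being controlled by the $\tX$/$\dX$ norm directly and the frequencies $\ll 1$ by the uncertainty principle. Only this second stage needs to be revisited.

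For it, I would fix a dyadic frequency $2^k$ with $2^k \approx 1$ and build a zero order escape symbol. On $\supp q$ every $H_a$ bicharacteristic has total length at most $L$ inside $\{|x| \le 2^M\}$, and this is exactly the hypothesis needed to produce a bounded, real, zero order symbol $m(t,x,\xi)$, defined near $\supp q \cap T^*\{|x| \le 2^{M+1}\}$, nondecreasing along the Hamilton flow and with $H_a m \gtrsim 1$ wherever the trajectory lies in $\{|x| \le 2^M\}$: one integrates a unit forcing backwards along each bicharacteristic until it last left $\{|x| \le 2^M\}$, which stays bounded by $L$ precisely because of the dwell-time bound, and matches this near $|x| = 2^M$ with the multiplier used in the exterior construction. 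Quantizing, set $M_k = q(x,D)\,S_k\,m(t,x,D)\,S_k\,q(x,D)$ (after a harmless symmetrization). Then $i[A, M_k]$ is, microlocally on $\supp q \cap \{|x| \le 2^M\}$ at frequency $2^k$, bounded below by a positive elliptic operator, modulo (a) terms a full order below the main term, hence a genuine gain; (b) terms microlocalized near $\supp \nabla q$; and (c) terms supported in $\{|x| > 2^M\}$, controlled by the exterior estimates.

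I would then run the energy identity $\frac{d}{dt}\la M_k u, u\ra = \la i[A,M_k] u, u\ra + \la M_k f, u\ra + \la M_k u, f\ra$, integrate over $[0,T]$, and read off a bound for $\|q(x,D) S_k u\|_{L^2_{t,x}(\{|x|\le 2^M\})}^2$: the time endpoint terms are $\lesssim \|u_0\|_{L^2}^2 + \|f\|^2_{\tX'_e + L^1_t L^2_x}$ by the energy inequality, the $f$ contributions land in the dual norms on the right, the errors of type (a) are absorbed using the gain together with the fact that $\tX_q$ already contains $\|u\|_{L^2_{t,x}(\{|x|\le 2^{M+1}\})}$, and the errors of type (c) are absorbed by the exterior estimates. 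Summing over $k \approx 1$ and combining with the exterior estimates of Theorems~\ref{main.ls.theorem}, \ref{main.ls.theorem.res} gives the local smoothing estimates with $\tX_q$, $\tX'_q$, $\dX_q$, $\dX'_q$ in place of $\tX_e$, $\tX'_e$, $\dX_e$, $\dX'_e$; the resonant case requires only the same modifications (the local average subtraction) used in Theorem~\ref{main.ls.theorem.res}. Finally, feeding these local smoothing estimates into the outgoing parametrix of \cite{T} exactly as in the proofs of Theorems~\ref{main.est.theorem} and~\ref{main.est.theorem.res} produces the Strichartz estimates.

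The step I expect to be the main obstacle is the type (b) error, i.e. the terms produced when the commutator falls on $q$ rather than on $m$. These are microlocalized near $\supp \nabla q$, which lies inside $\{|x| \le 2^M\}$ on the frontier between the escaping bicharacteristics and the trapped set, and they are of the same order as the main positive term, so they cannot simply be absorbed. The resolution, as in the analogous step of the non-trapping proof, is that $\supp \nabla q$ may be taken compactly contained in a conic region on which the bicharacteristics still have bounded (though possibly longer) dwell time in $\{|x|\le 2^M\}$ — this is where the specific shape of $\Omega_{trapped}^L$, which excludes only arbitrarily long but finite dwell times, is essential — so that one can either construct $m$ to grow fast enough there to dominate these terms, or run a finite induction over a nested family of such cutoffs. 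Getting this bookkeeping right, together with the (standard) treatment of bicharacteristics grazing $|x| = 2^M$, is the only genuinely new point.
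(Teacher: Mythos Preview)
Your plan is correct and is precisely what the paper has in mind: the paper offers no detailed argument for this corollary, only the one-line claim that ``the same argument as in the proof of the above Theorems'' (i.e.\ the non-trapping Theorems~\ref{main.ls.theoremnt}--\ref{main.theoremnt.res}) applies, and your sketch is a faithful unpacking of that claim --- exterior estimate plus an interior positive-commutator bound using an escape function built on the region where bicharacteristics have dwell time $<L$.

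One remark on your ``type (b)'' obstacle. You are right that sandwiching the escape function by $q$ produces same-order commutator errors on $\supp\nabla q$, and that this is the only nontrivial point. Your nested-cutoff/iteration fix works, but there is a cleaner route that more directly parallels Proposition~\ref{Doi_construction}: rather than localizing the multiplier by $q$, build the escape symbol $m$ itself so that $H_a m \geq 0$ globally on $T^*\{|x|\le 2^{M+1}\}$ and $H_a m \gtrsim |\xi|$ on $\supp q$. Concretely, pick a smooth zero-homogeneous cutoff $\tilde q$ with $\tilde q = 1$ on $\supp q$ and $\supp\tilde q$ still disjoint from $\Omega_{trapped}^L$ (possible since $\Omega_{trapped}^L$ is closed), and set $m$ equal to the backward time-integral of $\tilde q\,|\xi|$ along the $H_a$ flow, matched near $|x|=2^M$ to the exterior multiplier as you describe. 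Then $H_a m = \tilde q\,|\xi| \geq 0$ by construction, and $m$ is bounded because every bicharacteristic spends total time $<L$ in $\supp\tilde q$. Using $M_k = S_k\, m\, S_k$ directly (no $q$ sandwich), the commutator $i[A,M_k]$ has nonnegative principal symbol everywhere and is elliptic on $\supp q$, so G\aa rding gives the interior bound for $\|q(x,D)(1-\rho)u\|_{L^2_t H^{1/2}_x}$ with only lower-order and exterior errors --- the type (b) terms never appear. This is presumably the adaptation the paper is gesturing at.
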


We also note that if $A$ has time independent coefficients then
$\Omega_{trapped}^L$ is translation invariant. Hence a compactness
argument allows us to replace $\Omega_{trapped}^L$ by
$\Omega_{trapped}^\infty$, which contains all the trapped geodesics.

\subsubsection{Boundary value problems}

Consider solutions $u$ for either the Dirichlet problem \eqref{D} or
the Neumann problem \eqref{N}. Then singularities will propagate along
generalized broken bicharacteristics (see \cite{MS1, MS2},\cite{Hor},\cite{BGT2}).
Hence the non-trapping condition needs to be modified accordingly.

\begin{deff}
  We say that the metric $(a_{ij})$ is non-trapping if for each $R > 0$
  there exists $L > 0$ independent of $t$ so that any portion of a
  generalized broken bicharacteristic is contained in $\{|x| < R\}$
  has length at most $L$.
\end{deff}

With this modification the results of
Theorems~\ref{main.ls.theoremnt}, \ref{main.ls.theoremnt.res}, remain
valid.  However, some care must be taken with the results on
propagation of singularities near the boundary, as not all of them are
known to be valid for operators with only $C^2$ coefficients.

On the other hand we do not know whether the bounds in Theorems
\ref{main.theoremnt}, \ref{main.theoremnt.res} are true or not.  These
hinge on the validity of local Strichartz estimates near the boundary.
This is currently an unsolved problem.

\subsubsection{ Complex coefficients}

Again, one may ask to what extent are our results in this section 
are valid if complex coefficients are allowed. We have 
\begin{remark}
  The results in Theorems~\ref{main.ls.theoremnt},
  \ref{main.ls.theoremnt.res}, \ref{main.theoremnt},
  \ref{main.theoremnt.res} remain valid if the coefficients $b^i$ and
  $c$ are allowed to be complex.

\end{remark}

This result is obtained without making any changes to our proofs
provided that the constant $\kappa$ in \eqref{coeffb} is sufficiently
small. Otherwise, the multiplier $q$ used in the proof has to change 
too much along bicharacteristics from entry to exit from 
$B(0,2^M)$; this in turn forces a modified multiplier for the exterior
region.  See, e.g., \cite{D1, Doi} and \cite{ST}.

\subsection{ Time independent metrics}

It is natural to ask when can one eliminate the error term altogether.
This is a very delicate question, which hinges on the local in space
evolution of low frequency solutions. For general operators $A$ with 
time dependent coefficients this question seems out of reach for now.

This leads us to the third part of the paper where, in addition to the
flatness assumption above and the non-trapping hypothesis on $a_{ij}$, we
take our coefficients $a^{ij},b^i,c$ to be time-independent.  Then the natural 
obstruction to the dispersive estimates comes from possible
eigenvalues and zero resonances of the operator $A$.

Since the operator $A$ is self-adjoint, it follows that its spectrum is
real. More precisely, $A$ has a continuous spectrum $\sigma_c=
[0,\infty)$ and a point spectrum $\sigma_p$ consisting of discrete
finite multiplicity eigenvalues in $\R^-$, whose only possible
accumulation point is $0$.  

From the point of view of dispersion there is nothing we can do about
eigenvalues. Consequently we introduce the spectral projector $P_c$
onto the continuous spectrum, and obtain dispersive estimates only for
$P_c u$ for solutions $u$ to \eqref{main.equation}. 

The resolvent 
\[
R_\lambda = (\lambda - A)^{-1}
\]
is well defined in $\C \setminus (\sigma_c \cup \sigma_p)$.
One may ask whether there is any meromorphic continuation 
of the resolvent $R_\lambda$ across the positive real axis, starting
on either side. This is indeed possible. The poles of this
meromorphic continuation are called resonances.
This is of interest to us because the resonances which are close to
the real axis play an important role in the long time behavior
of solutions to the Schr\"{o}dinger equation. 

In the case which we consider here (asymptotically flat), there are no
resonances nor eigenvalues inside the continuous spectrum i.e. in
$(0,\infty)$. However, the bottom of the continuous spectrum,
namely $0$, may be either an eigenfunction (if $n \geq 5$) or 
a resonance (if $n \leq 4$). For zero resonances we use
a fairly restrictive definition:

\begin{deff}
We say that $0$ is a resonance for $A$ if there is a function 
$u \in \tX^0$ so that $Au = 0$. The function $u$ 
is called a zero resonant state of $A$.
\end{deff}

Here $\tX^0$ denotes the spatial part of the $\tX$ norm.  I.e. $\tX = L^2_t \tX^0$.

The main case we consider here is when $0$ is neither an eigenfunction
(if $n \geq 5$) nor a resonance (if $n \leq 4$). This implies that
there are no eigenvalues close to $0$. Then $A$ has at most finitely
many negative eigenvalues, and the corresponding eigenfunctions decay
exponentially at infinity.

\begin{theorem}\label{theorem.PcSmoothing}  
  Suppose that $a^{ij},b^i,c$ are real, time-independent, and satisfy the
  conditions \eqref{coeff},\eqref{coeffb}, and
  \eqref{coeffc}.  We also assume that the Hamiltonian vector field
  $H_a$ permits no trapped geodesics and that $0$ is not an
  eigenvalue or a resonance of $A$.  Then for all solutions $u$ to
  \eqref{main.equation} we have
\begin{equation}\label{PcSmoothing}
\|P_c u\|_\tX\lesssim \|u_0\|_2 + \| f\|_{\tX'}.
\end{equation}
\end{theorem}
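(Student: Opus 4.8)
Since the coefficients are time independent, the continuous-spectrum projector $P_c$ commutes with the evolution $e^{-itA}$, so $v := P_c u$ solves $(D_t + A)v = P_c f$ with $v(0) = P_c u_0$. The complementary projector $I - P_c$ is finite rank, with range and cokernel spanned by smooth, exponentially decaying eigenfunctions of $A$, so $P_c$ is bounded on $L^2$ and on $\tX'$. Hence, applying the non-trapping local smoothing estimate of Theorem~\ref{main.ls.theoremnt} to $v$ gives
\[
\|P_c u\|_\tX \lesssim \|u_0\|_2 + \|f\|_{\tX'} + \|P_c u\|_{L^2_{t,x}(\{|x|\le 2R\})},
\]
and everything reduces to bounding the spatially localized error term by the right-hand side.

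Next I would split this error term in Littlewood--Paley frequency. The high-frequency piece $|D| \gg 1$ is absorbed into the left-hand side: thanks to the $2^k$ weight and the spatial weight built into the $X_k$ norms, the localized $L^2$ norm of $S_k v$ is bounded by a negative power of $2^k$ times $\|S_k v\|_{X_k}$, so cutting at a large frequency leaves only a small multiple of $\|P_c u\|_\tX$. For the remaining frequencies, fix a spatial cutoff $\chi$ supported in $\{|x|\le 2R\}$ and a smooth spectral cutoff $\psi$ to a bounded range of energies; the task becomes to prove
\[
\|\chi\,\psi(A)\,P_c u\|_{L^2_{t,x}} \lesssim \|u_0\|_2 + \|f\|_{\tX'}.
\]
By Duhamel's formula, the spectral theorem, and Plancherel in $t$, this is equivalent to the Kato smoothing / limiting absorption statement that the cutoff resolvent $\chi (A - \lambda \mp i0)^{-1} \chi$ is bounded on $L^2$ uniformly in $\lambda$ over the relevant energy window, together with its companion bound $\dX' \to \dX$ (whose $|D|^{1/2}$ factor supplies the half-derivative needed to handle the $\tX'$ source via a $TT^*$/Christ--Kiselev argument).

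For energies in a fixed compact subset of $(0,\infty)$ this uniform resolvent bound is the classical non-trapping estimate, obtainable either by a Mourre commutator argument with the generator of dilations, or—more robustly at this low regularity—by a contradiction/compactness argument: a normalized sequence saturating the bound would, after profile extraction, produce one of (a) a nonzero $L^2_{loc}$ solution of $(A - \lambda_0)w = 0$ with $\lambda_0 > 0$, impossible since there are no embedded eigenvalues or resonances in $(0,\infty)$; (b) a negative eigenfunction, removed by $P_c$; (c) mass escaping to spatial or frequency infinity, excluded by non-trapping and the frequency cutoff; or (d), as $\lambda_0 \to 0^+$, a zero resonant state or zero eigenfunction, excluded by hypothesis. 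Equivalently one can write the estimate as an invertibility statement for $I + K_\lambda$ with $K_\lambda$ compact, obtained by comparing $A$ with the operator gotten by flattening the coefficients inside $\{|x| < 2^M\}$—for which the error-free estimate follows from Theorem~\ref{main.ls.theorem}—and conclude by Fredholm theory, the zero-obstruction hypothesis giving invertibility down to $\lambda = 0$.

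The main obstacle is exactly the uniform resolvent bound \emph{near the bottom of the continuous spectrum}, i.e. controlling $\chi (A - \lambda - i0)^{-1}\chi$ as $\lambda \to 0^+$. This is where the hypothesis that $0$ is neither an eigenvalue nor a resonance is indispensable, and it is genuinely delicate here: the coefficients are only $C^2$ and merely long range—not smooth and compactly supported—so the standard low-energy resolvent expansions are unavailable, and a low-energy analysis adapted to this regularity must be carried out, using the absence of a zero obstruction to invert the relevant limiting operator. A secondary technical point is making the compactness/Fredholm argument operate within the $\dX$/$\dX'$ scale (and, in dimensions $n = 1,2$, respect the quotient structure and the standing assumptions $c = 0$, $\div b = 0$) rather than in plain weighted $L^2$.
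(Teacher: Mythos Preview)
Your overall architecture---reduce to a fixed-$\tau$ resolvent bound via Plancherel and run a contradiction/compactness argument---is the same as the paper's. But you have inverted the difficulty. You flag the limit $\lambda\to 0^+$ as ``the main obstacle'' requiring a delicate low-energy analysis, whereas in the paper's framing this case is immediate: the weak limit $v$ produced by the compactness argument lies in $\tX^0$ and solves $Av=0$, which is \emph{by definition} a zero resonance (or eigenvalue), excluded by hypothesis. No separate low-energy resolvent expansion is needed.

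The genuine work, which your sketch skips, is the case $\tau>0$. There the weak limit $v$ satisfies $v\in\tX^0$, $(A-\tau)v=0$, $\|v\|_{L^2(B(0,2R))}=1$; your claim that this is ``impossible since there are no embedded eigenvalues or resonances in $(0,\infty)$'' is precisely what must be proved at $C^2$ long-range regularity. The paper does this by first establishing a \emph{uniform} outgoing-radiation estimate for the approximating sequence $v_n$ (via a positive commutator with a multiplier of the form $b(R)(x_i a^{ij}D_j/R - \tau^{1/2}) + \text{adjoint}$, $b$ slowly growing), which survives the weak limit and forces $\|r^{-1/2}(\partial_r - i\tau^{1/2})v\|_{L^2(D_j)}\to 0$. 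Pairing this radiation condition with the commutator $[A,\chi(2^{-j}r)]$ gives the improved decay $\|r^{-1/2}v\|_{L^2(D_j)}\to 0$; a second positive-commutator argument (reusing the low-frequency multiplier $Q_k$ from Lemma~\ref{lemma.low.freq} and letting $k\to -\infty$) then upgrades $v$ to $L^2$, so $v$ is an embedded eigenfunction, ruled out by Koch--Tataru. That radiation-condition step---extracting quantitative information from $v_n$ \emph{before} passing to the limit---is the missing idea in your outline; without it the compactness argument produces only a $\tX^0$ solution, and you have no mechanism to exclude it.
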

From this, using the parametrix of \cite{T}, we immediately obtain the corresponding global-in-time
Strichartz estimates:
\begin{theorem}\label{corr.nontrap.Strichartz}
  Suppose that $a^{ij},b^i,c$ are real, time-independent, and
  satisfy the  conditions \eqref{coeff},\eqref{coeffb},
  and \eqref{coeffc}.  Moreover, assume that the Hamiltonian vector
  field $H_a$ permits no trapped geodesics.  Assume, also, that $0$ is
  not an eigenvalue or a resonance of $A$.  Then for all solutions $u$
  to \eqref{main.equation}, we have
\begin{equation}\label{PcStrichartz}
\|P_c u\|_{L^{p_1}_t L^{q_1}_x \cap \tX} \lesssim \|u_0\|_2  + \|f\|_{L^{p'_2}_t
  L^{q'_2}_x+\tX'} ,
\end{equation}
for any Strichartz pairs $(p_1,q_1)$ and $(p_2,q_2)$.
\end{theorem}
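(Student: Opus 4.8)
The plan is to derive the global-in-time Strichartz estimate \eqref{PcStrichartz} from the local smoothing estimate \eqref{PcSmoothing} of Theorem~\ref{theorem.PcSmoothing} by invoking the parametrix construction of \cite{T} in essentially the same way that Theorems~\ref{main.est.theorem} and \ref{main.theoremnt} are obtained from their local smoothing counterparts. The key observation is that the implication ``local smoothing $\implies$ Strichartz'' of \cite{T} is not tied to the projector $P_c$; it applies to any function whose $\tX$ norm is controlled, since the parametrix only needs the $\tX$-boundedness of the solution together with the equation it satisfies. Thus the first step is to record that $v := P_c u$ solves an equation of the same form, $(D_t + A) v = P_c f$, with $v(0) = P_c u_0$, because $A$ is time-independent and hence $P_c$ commutes with the propagator $e^{-itA}$; in particular $P_c$ commutes with $D_t + A$ on the relevant domain.

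Second, I would feed $v$ into the parametrix machine from \cite{T}. That construction produces an approximate solution built via the FBI transform whose Strichartz norms are bounded by the initial data in $L^2$ plus the source in $\tX' + L^{p_2'}_tL^{q_2'}_x$, with an error term that is itself estimated in $\tX'$ (this is exactly where ``local smoothing'' feeds back in). Applying the triangle inequality, one gets
\begin{equation}
\|v\|_{L^{p_1}_tL^{q_1}_x} \lesssim \|v(0)\|_{L^2} + \|(D_t+A)v\|_{\tX' + L^{p_2'}_tL^{q_2'}_x} + \|v\|_{\tX}.
\end{equation}
The $\|v\|_{\tX}$ term is then absorbed using Theorem~\ref{theorem.PcSmoothing}: since $v = P_cu$, we have $\|v\|_{\tX} \lesssim \|u_0\|_2 + \|f\|_{\tX'}$, with no error term, precisely because the eigenvalue/resonance obstruction has been removed by hypothesis and by projecting onto the continuous spectrum. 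Finally, one notes $\|v(0)\|_{L^2} = \|P_cu_0\|_2 \le \|u_0\|_2$ and $\|(D_t+A)v\|_{\tX'+L^{p_2'}_tL^{q_2'}_x} = \|P_cf\|_{\tX'+L^{p_2'}_tL^{q_2'}_x} \lesssim \|f\|_{\tX'+L^{p_2'}_tL^{q_2'}_x}$, using that $P_c$ is bounded on $L^2_x$ uniformly in $t$ and — together with the commutation with $e^{-itA}$ — on the parametrix-adapted spaces; combining the displayed inequality with $\|v\|_\tX$ from \eqref{PcSmoothing} gives \eqref{PcStrichartz}.

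The step I expect to be the main obstacle is the boundedness of $P_c$ on the mixed space-time norms appearing on the right-hand side, in particular on $\tX'$ and on $L^{p_2'}_tL^{q_2'}_x$. On $L^2_x$ this is immediate since $P_c$ is an orthogonal projection, and boundedness in $t$ is trivial because $P_c$ is time-independent; but $\tX'$ and the dual Strichartz spaces are weighted and anisotropic, so one needs a quantitative decay statement for $P_c = I - P_{pp}$, i.e. for the finite-rank point-spectrum projector $P_{pp}$, whose eigenfunctions decay exponentially (as noted in the text just before Theorem~\ref{theorem.PcSmoothing} — here the absence of a zero eigenvalue/resonance is essential to ensure there are only finitely many negative eigenvalues and no accumulation at $0$). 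Given that exponential decay, $P_{pp}$ maps $L^1_tL^2_x$, $L^{p'}_tL^{q'}_x$, and $\tX'$ boundedly into themselves (and likewise for the dual side), so $P_c$ does too. Alternatively, and perhaps more cleanly, one can avoid projecting the data at all: apply the parametrix to $u$ itself, obtain the Strichartz bound for $u$ modulo the interior error, and then only at the last stage project, using that $P_cu - u = -P_{pp}u$ has all the good mapping properties of the finite-rank exponentially-decaying projector, so its Strichartz norm is harmlessly bounded. Either route is routine once the decay of the bound-state projector is in hand; the remaining bookkeeping — matching the error term in the parametrix of \cite{T} with the error-free right-hand side of \eqref{PcSmoothing} — is exactly parallel to the proofs of Theorems~\ref{main.est.theorem}--\ref{main.theoremnt.res} and requires no new ideas.
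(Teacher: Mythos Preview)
Your proposal is correct and follows essentially the same route as the paper, which merely declares this proof ``almost identical'' to that of Theorem~\ref{main.theoremnt}: namely, subtract the parametrix $Kg$ of Theorem~\ref{prop.Tataru.parametrix.nr}, apply the local smoothing estimate \eqref{PcSmoothing} to the remainder, then invoke Theorem~\ref{prop.Tataru.xtolp.nr} and reassemble. The only organizational difference is that the paper subtracts $Kg$ from $u$ before projecting, whereas you project first and then run the parametrix on $v=P_c u$; either ordering forces exactly the technical point you isolate---boundedness of $P_c$ (equivalently of the finite-rank projector $P_{pp}$ onto exponentially decaying eigenfunctions) on $\tX$, $\tX'$, and $L^{p}_tL^{q}_x$---and your justification via the spectral structure discussed just before Theorem~\ref{theorem.PcSmoothing} is the right one.
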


One can compare this with the result of \cite{RT}, where the authors
consider a smooth compactly supported perturbation of the metric in
$3+1$ dimensions where no eigenvalues are present.  Estimates in the
spirit of \eqref{PcStrichartz} have also recently be shown by
\cite{BT2}, though only for smooth coefficients and with a more
restrictive spectral projection.  We also note the related work
\cite{EGS} on Schr\"odinger equations with magnetic potentials.  In
their work, the second order operator is taken to be $-\Delta$.
Theorem \ref{corr.nontrap.Strichartz} is a more general version of
the main theorem in \cite{EGS} in the sense that it allows a more
general leading order operator and that it assumes less flatness on
the coefficients.

In dimension $n \geq 3$ zero is not an eigenvalue or a resonance 
for $-\Delta$, nor for small perturbations of it. However, in
dimension $n=1,2$, zero is a resonance and the corresponding
resonant states are the constant functions. This spectral picture
is not stable with respect to lower order perturbations, but it does
remain stable with respect to perturbations of the metric $a^{ij}$.
Hence there is some motivation to also investigate this case 
in more detail. We prove the following result.

\begin{theorem}\label{theorem.PcSmoothing.res}  
  Assume that the coefficients of $P$ are time-independent, but otherwise as in
  Theorem~\ref{main.ls.theorem.res}.  Assume also that the Hamiltonian
  vector field $H_a$ permits no trapped geodesics, and that there are
  no nonconstant zero resonant states of $A$.  Then for all
  solutions $u$ to \eqref{main.equation}, we have
\begin{equation}\label{PcSmoothing.res}
\| u\|_\dX\lesssim \|u_0\|_2 + \| f\|_{\dX'}.
\end{equation}
\end{theorem}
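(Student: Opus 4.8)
The plan is to bootstrap from the non-trapping estimate with error term. Specializing Theorem~\ref{main.ls.theoremnt.res} to time-independent coefficients already gives
\[
\|u\|_{\dX} \lesssim \|u_0\|_{2} + \|f\|_{\dX'} + \|u - u_\rho\|_{L^2_{t,x}(\{|x|\le 2R\})},
\]
so the entire task is to absorb the last term into the left-hand side, at the cost of the first two, using the spectral hypotheses (non-trapping, and no nonconstant zero resonant states).

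First I would reduce the error term to a single intermediate frequency. Split $u = S_{\ll 1}u + S_{\approx 1}u + S_{\gg 1}u$ in frequency. As noted in the remark following Theorem~\ref{main.ls.theoremnt.res}, the $S_{\gg 1}$ contribution to $\|u-u_\rho\|_{L^2_{t,x}(\{|x|\le 2R\})}$ is controlled by $\|S_{\gg1}u\|_{\dX}$ with a gain of a negative power of the frequency coming from the $L^2_{t,x}(A_{<0})$ piece of the $X_k$ norm ($k>0$); summing over large $k$ this is at most $\delta\|u\|_{\dX}$ once the frequency window is taken large. Likewise, after subtracting the spatial average $u_\rho$, the $S_{\ll 1}$ contribution is at most $\delta\|u\|_{\dX}$ by the Poincar\'e / uncertainty-principle inequality encoded in the low-frequency $X_k$ norms. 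Absorbing both, it remains to prove
\[
\|S_{\approx 1}(u - u_\rho)\|_{L^2_{t,x}(\{|x|\le 2R\})} \lesssim \|u_0\|_{2} + \|f\|_{\dX'}.
\]

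For this I would use the time-independence to pass to the resolvent. Taking the Fourier transform in $t$, the equation $(D_t+A)u=f$, $u(0)=u_0$, becomes at frequency $\tau$ an identity expressing $\hat u(\tau)$ through the limiting resolvent $R_{-\tau+i0}=(-\tau+i0-A)^{-1}$ applied to $\hat f(\tau)$ and to $u_0$. By Plancherel, the displayed bound reduces to a uniform local resolvent estimate of the form
\[
\sup_{|\tau|\sim 1}\ \bigl\| \chi\, S_{\approx 1}\, R_{-\tau+i0}\, S_{\approx 1}\, \chi \bigr\|_{L^2 \to L^2} < \infty
\]
(together with the companion bounds pairing $R_{-\tau+i0}$ against the $u_0$ and $\hat f$ source terms, handled the same way), where $\chi$ cuts off to $\{|x|\le 2R\}$. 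For $\tau$ in the part of $\{|\tau|\sim 1\}$ lying in $-\sigma_c(A)=(-\infty,0]$ and bounded away from $0$ --- i.e. energies $\lambda=-\tau$ in a fixed compact subinterval of $(0,\infty)$ --- this is exactly the limiting absorption principle for asymptotically flat non-trapping metrics; its proof rests on the absence of eigenvalues embedded in $(0,\infty)$ (a Rellich-type unique continuation statement using the decay in \eqref{coeff}--\eqref{coeffc}) and on the non-trapping hypothesis, which also makes the bound uniform on the interval. For $\tau$ in the remaining range (energies $\lambda\le 0$) the resolvent $R_{-\tau}$ is an honest bounded operator away from $\sigma_p(A)$, so here one needs only that $A$ has no negative eigenvalues, which one must check holds under the structural hypotheses on the lower-order terms ($\partial_i b^i=0$, $c=0$). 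The threshold $\tau=0$ has already been removed with the rest of $S_{\ll 1}u$ in the previous step; equivalently, the only zero-energy bound states are constants, which vanish identically in $\dX^0$ and hence cannot obstruct the estimate --- and it is precisely here that the hypothesis of no nonconstant zero resonant states enters. Patching the three regimes and re-summing the Littlewood--Paley pieces gives \eqref{PcSmoothing.res}.

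Alternatively, the resolvent analysis can be recast as a compactness-and-contradiction argument: a putative sequence of near-counterexamples $u_n$, normalized in $\dX$ with vanishing data, has by elliptic regularity and time-translation invariance a strong $L^2_{loc}$ limit $u_\infty$ solving $(D_t+A)u_\infty=0$, lying in $\dX$, with $S_{\approx 1}u_\infty\neq 0$; time-independence forces $u_\infty$ to be assembled from $\dX^0$-solutions of $(A-\lambda)v=0$ with $\lambda\in\sigma(A)$, and the inputs above show all such solutions with $\lambda\neq 0$ vanish while those with $\lambda=0$ are constant, contradicting $S_{\approx 1}u_\infty\neq 0$. In either presentation the main obstacle is the content of the third step: establishing the uniform local resolvent bound --- the limiting absorption principle --- for merely $C^2$, asymptotically flat, non-trapping operators, and in particular correctly handling the bottom of the spectrum, where the low-dimensional quotient structure of $\dX^0$ (which makes the flat-Laplacian zero resonance invisible, and is the reason no projection $P_c$ is needed here) must be reconciled with the Poincar\'e reduction of the second step and with the uniqueness statement for $(A-\lambda)v=0$, and where one must rule out negative eigenvalues produced by the lower-order terms.
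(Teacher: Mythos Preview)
Your overall architecture --- start from the non-trapping estimate \eqref{lsntres}, pass to the resolvent via the time Fourier transform, and close by a contradiction argument using the spectral hypotheses --- is indeed the paper's approach. But your execution contains a genuine conflation that makes the reduction step fail.

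The spatial Littlewood--Paley localization $S_{\approx 1}$ you perform in the first step does \emph{not} restrict the time-Fourier variable $\tau$. After Plancherel the bound you need is
\[
\|v\|_{\dX^0}\lesssim \|(A-\tau-i\varepsilon)v\|_{(\dX^0)'}
\]
uniformly in \emph{all} real $\tau$ and all $\varepsilon>0$; the spatial frequency of $v$ is irrelevant to which $\tau$ must be treated. In particular your claim that ``the threshold $\tau=0$ has already been removed with the rest of $S_{\ll 1}u$'' mixes up two unrelated localizations: discarding low spatial frequencies of $u$ in no way disposes of the zero-energy case of the resolvent estimate. Likewise, the displayed operator $\chi\,S_{\approx 1}\,R_{-\tau+i0}\,S_{\approx 1}\,\chi$ is not the right object --- the source $\hat f(\tau)$ carries no spatial frequency or support restriction, so the rightmost $S_{\approx 1}\chi$ is unjustified.

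The paper instead proves the resolvent bound directly for every $\tau$: large $|\tau|$ is handled by an elliptic/interpolation argument; for bounded $\tau$ one argues by contradiction, producing a limit $v\in\dX^0$ with $(A-\tau)v=0$ and $\|v-v_\rho\|_{L^2(B(0,2R))}=1$. Then $\tau<0$ is excluded by the maximum principle (here the structural hypotheses $c=0$, $\partial_ib^i=0$ enter), $\tau=0$ forces $v$ to be a nonconstant zero resonant state (contradiction), and $\tau>0$ requires the substantive part of the proof you only name: one must establish an outgoing radiation condition for the limit via a positive-commutator computation, use it to upgrade $v$ to $L^2$, and then invoke absence of embedded eigenvalues. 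Your ``limiting absorption principle'' citation hides exactly this work, which for merely $C^2$ asymptotically flat coefficients is not available off the shelf and is precisely what the paper supplies.
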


In terms of Strichartz estimates, this has the following consequence:

\begin{theorem}\label{corr.nontrap.Strichartz.res}
  Assume that the coefficients of $P$ are time-independent, but otherwise as in
  Theorem~\ref{main.ls.theorem.res}.  Assume also that the Hamiltonian
  vector field $H_a$ permits no trapped geodesics, and that there are
  no nonconstant zero resonant states of $A$.  Then for all
  solutions $u$ to \eqref{main.equation}, we have
\begin{equation}\label{PcStrichartz.res}
\| u\|_{L^{p_1}_t L^{q_1}_x \cap \dX} \lesssim \|u_0\|_2  + \|f\|_{L^{p'_2}_t
  L^{q'_2}_x+\dX'} 
\end{equation}
for any Strichartz pairs $(p_1,q_1)$ and $(p_2,q_2)$.
\end{theorem}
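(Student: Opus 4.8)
The plan is to obtain \eqref{PcStrichartz.res} from the local smoothing bound \eqref{PcSmoothing.res} of Theorem~\ref{theorem.PcSmoothing.res} by running the passage from local smoothing estimates to Strichartz estimates introduced in \cite{T}, exactly as the Strichartz bounds of Theorems~\ref{main.est.theorem.res} and \ref{main.theoremnt.res} were obtained from their local smoothing counterparts. Two preliminary observations: since the coefficients are time-independent and $A$ is self-adjoint, the homogeneous evolution $e^{-itA}$ is unitary on $L^2$, which is all that will be needed for the Christ--Kiselev step below; and \eqref{PcSmoothing.res} is already a clean estimate, carrying neither a spectral projector nor a localized error term, so the argument will produce a correspondingly clean Strichartz bound.

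The core step is to adjoin the $L^{p_1}_t L^{q_1}_x$ norm to the left-hand side of \eqref{PcSmoothing.res}. Here I would invoke the outgoing parametrix $K$ of \cite{T}, built in the exterior region $\{|x| > 2^M\}$ where \eqref{coeff.M}--\eqref{coeffc.M} hold, via the FBI transform. Its two relevant properties are that it maps $L^2$ data, respectively $\dX' + L^1_t L^2_x$ sources, boundedly into $L^{p_1}_t L^{q_1}_x \cap \dX \cap C_t L^2_x$, and that it solves \eqref{main.equation} modulo an error whose source term is controlled in $\dX'$. Writing a solution $u$ with $L^2$ datum $u_0$ and source $f \in \dX'$ as $u = K(u_0,f) + \tilde u$, the correction $\tilde u$ solves $P\tilde u = g$, $\tilde u(0) = \tilde u_0$ with $\|\tilde u_0\|_{L^2} + \|g\|_{\dX'} \lesssim \|u_0\|_{L^2} + \|f\|_{\dX'}$; Theorem~\ref{theorem.PcSmoothing.res} controls $\|\tilde u\|_{\dX}$, and the iteration of \cite{T} --- which closes precisely because of this local smoothing control --- then promotes the bound to $\|\tilde u\|_{L^{p_1}_t L^{q_1}_x} \lesssim \|u_0\|_{L^2} + \|f\|_{\dX'}$. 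This establishes \eqref{PcStrichartz.res} when the source lies in $\dX' + L^1_t L^2_x$.

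It remains to allow an $L^{p'_2}_t L^{q'_2}_x$ source as well. Splitting $f = f_1 + f_2$ with $f_1 \in \dX' + L^1_t L^2_x$ and $f_2 \in L^{p'_2}_t L^{q'_2}_x$, and correspondingly $u = u^{(1)} + u^{(2)}$ with $u^{(1)}(0) = u_0$, $u^{(2)}(0) = 0$, the previous step handles $u^{(1)}$. For $u^{(2)} = -i\int_0^t e^{-i(t-s)A} f_2(s)\, ds$ the standard $TT^*$ argument, combined with the Christ--Kiselev lemma to pass to the retarded operator (admissible since the $q=\infty$ endpoint has been excluded), gives $\|u^{(2)}\|_{L^{p_1}_t L^{q_1}_x} \lesssim \|f_2\|_{L^{p'_2}_t L^{q'_2}_x}$ from the homogeneous Strichartz estimate, and likewise $\|u^{(2)}\|_{\dX} \lesssim \|f_2\|_{L^{p'_2}_t L^{q'_2}_x}$ from the homogeneous local smoothing bound $\|e^{-itA}v_0\|_{\dX} \lesssim \|v_0\|_{L^2}$, which is the $f=0$ case of Theorem~\ref{theorem.PcSmoothing.res}. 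Adding the two contributions yields \eqref{PcStrichartz.res}.

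The only analytically serious ingredient, namely the global-in-time low-frequency local smoothing estimate in the resonant low-dimensional regime, has already been isolated in Theorem~\ref{theorem.PcSmoothing.res}; what remains above is routine, and I expect the only genuine care to lie in checking that the parametrix construction and iteration of \cite{T}, together with the Christ--Kiselev step, are compatible with the convention in force for $n=1,2$ that $\dX$ is a space of distributions modulo constants and $\dX'$ consists of functions of vanishing integral (cf.~\eqref{Hardylow}). This causes no trouble, since the constant (zero-resonance) mode lies in $\ker P$ --- a consequence of $c=0$ and $\partial_i b^i = 0$ --- so it is transparent to all of the operations above and is consistently quotiented out on both sides of \eqref{PcStrichartz.res}.
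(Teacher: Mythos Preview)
Your proposal is correct in spirit and follows the same overall strategy as the paper --- deduce Strichartz from the local smoothing bound of Theorem~\ref{theorem.PcSmoothing.res} via the parametrix machinery of \cite{T} --- but the organization differs in one notable respect.

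The paper's proof (given jointly for Theorems~\ref{main.theoremnt}, \ref{corr.nontrap.Strichartz}, \ref{main.theoremnt.res}, \ref{corr.nontrap.Strichartz.res}) applies the parametrix $K$ of Theorem~\ref{prop.Tataru.parametrix.2} \emph{only} to the $L^{p'_2}_t L^{q'_2}_x$ portion $g$ of the source, sets $v = u - Kg$, and observes via \eqref{kf}, \eqref{lperror2b}, \eqref{bnoc} that $Pv$ now lies in $\dX'$ with $v(0)\in L^2$. Theorem~\ref{theorem.PcSmoothing.res} controls $\|v\|_{\dX}$, and then the black-box Theorem~\ref{prop.Tataru.xtolp} (the bound $\|v\|_{L^{p_1}_t L^{q_1}_x} \lesssim \|v\|_{\dX\cap L^\infty_t L^2_x} + \|Pv\|_{\dX'}$) supplies the Strichartz norm on $v$; adding back $Kg$ via \eqref{kf} finishes. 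No Christ--Kiselev step is needed.

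You instead first treat the $\dX'$ source via parametrix plus iteration (what you call ``the iteration of \cite{T}'' is exactly the content of Theorem~\ref{prop.Tataru.xtolp}), and then handle the $L^{p'_2}_t L^{q'_2}_x$ source separately by $TT^*$ and Christ--Kiselev. This is a perfectly valid alternative and is the more classical route; the paper's route is more self-contained because the parametrix already delivers the inhomogeneous $L^{p'_2}\to L^{p_1}$ mapping directly. One small imprecision: your description of $K$ as accepting $L^2$ data and $\dX'$ sources is not how it is packaged here --- in the paper $K$ acts only on $L^{p'}_t L^{q'}_x$ sources (Theorem~\ref{prop.Tataru.parametrix.2}), and the passage you describe is already encapsulated in Theorem~\ref{prop.Tataru.xtolp}.
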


Implicit in the above theorems is the fact that there are, under their
hypothesis, no eigenvalues for $A$.  There is another
simplification if we make the additional assumption that $b = 0$.

\begin{remark}\label{remark.nootherres}
If in addition $b = 0$, then there are no nonconstant generalized zero
eigenvalues of $A$.
\end{remark}

In order to prove Theorems \ref{theorem.PcSmoothing} and
\ref{theorem.PcSmoothing.res}, we restate the bounds
\eqref{PcSmoothing} and \eqref{PcSmoothing.res} in terms of estimates
on the resolvent using the Fourier transform in $t$.  We then argue
via contradiction.  Using the positive commutator method, we show an
outgoing radiation condition (see Steps 8-10 of the proof), which allows us
to pass to subsequences and claim that if \eqref{PcSmoothing} were
false, then there is a resonance or an eigenvalue $v$ within the
continuous spectrum. By hypothesis this cannot occur at $0$.  We use
another multiplier and the radiation condition to then show that $v\in
L^2$ and thus cannot be a resonance.  As results of \cite{KT2} show
that there are no eigenvalues embedded in the continuous spectrum, we
reach a contradiction. If instead \eqref{PcSmoothing.res} were false,
then the same argument produces a nonconstant zero resonance, again
reaching a contradiction.
 
The paper is organized as follows.  In the next section, we fix some
further notations and our paradifferential setup.  It is here that we
show that we may permit the lower order terms in the local smoothing
estimates in a perturbative manner.
In the third section, we prove the local smoothing estimates using the
positive commutator method, first in the exterior local smoothing
spaces and then in the non-trapping case.  The fourth section is
devoted to non-trapping, time-independent operators.
 In the final section, we review the  parametrix of \cite{T} 
and use it to  show how the Strichartz estimates follow
from the local smoothing estimates. 

{\em Acknowledgements:} The authors thank W. Schlag and M. Zworski for helpful discussions
regarding some of the spectral theory, and in particular the behavior of resonances, contained herein.

\bigskip
\newsection{Notations and the paradifferential setup}\label{not_para}

\subsection{Notations}
We shall be using dyadic decompositions of both space and frequency.
For the spatial decomposition, we let $\chi_k$ denote smooth functions
satisfying
\[
1=\sum_{j=0}^\infty \chi_j(x),\quad \supp \chi_0\subset\{|x|\le 2\},
\quad \supp\chi_j\subset \{2^{j-1}<|x|<2^{j+1}\}\text{ for } j\ge 1.
\]
We also set
\[
\chi_{<k} = \sum_{0\le j<k} \chi_j
\]
with the obvious modification for $\chi_{>k}$.  In frequency, we use a
smooth Littlewood-Paley decomposition
\[
1=\sum_{j=-\infty}^\infty S_j(D), \quad \supp
s_j\subset\{2^{j-1}<|\xi|<2^{j+1}\}
\]
and similar notations for $S_{<k}, S_{>k}$ are applied.

We say that a function is frequency localized at frequency $2^k$ if its Fourier transform is supported
in the annulus $\{2^{k-1}<|\xi|<2^{k+1}\}$.  An operator $K$ is said to be frequency localized if
$Kf$ is supported in $\{2^{k-10}<|\xi|<2^{k+10}\}$ for any function $f$ which is frequency localized at
$2^k$.

For $\kappa$ as in \eqref{coeff}, we may choose a positive, slowly varying sequence $\kappa_j\in \ell^1$ satisfying
\begin{equation}
\label{kappa_j}
\sup_{A_j} \la x\ra^2 |\partial_x^2 a(t,x)| + \la x\ra |\partial_x a(t,x)| + |a(t,x)-I_n|\le \kappa_j,
\end{equation}
$$\sum \kappa_j \lesssim \kappa,$$
and
$$|\ln \kappa_j - \ln \kappa_{j-1}|\le 2^{-10}.$$
When the lower order terms are present, we may choose $\kappa_j$ so that each dyadic piece of
\eqref{coeffb} is also controlled similarly.
We may also assume that $M$ in \eqref{coeff.M} is chosen sufficiently large that
$$\sum_{j\ge M} \kappa_j \lesssim \varepsilon.$$
Associated to this slowly varying sequence, we may choose functions $\kappa_k(s)$ with
$$\kappa_0<\kappa_k(s)<2\kappa_0,\quad 0\le s < 2,$$
$$\kappa_j<\kappa_k(s)<2\kappa_j,\quad 2^j<s<2^{j+1},\quad j\ge 1,$$
for $k\ge 0$, 
$$\kappa_k<\kappa_k(s)<2\kappa_k,\quad 0\le s < 2^{-k},$$
$$\kappa_j<\kappa_k(s)<2\kappa_j,\quad 2^j<s<2^{j+1},\quad j\ge -k$$
for $k<0$, 
and
$$|\kappa'_k(s)|\le 2^{-5} s^{-1}\kappa_k(s).$$

\subsection{ Embeddings for the $X$ spaces}
\label{embxs}

Here we prove Lemma~\ref{txdx}.  For the purpose of this section we
can entirely neglect the time variable. 
Let $\psi$ be a smooth, spherically symmetric Schwartz function
with $\psi(0)=1$ which is frequency localized in the unit annulus.
Set
\[
\psi_k(x) = \psi(2^k x).
\]
Given $u \in \tX$, we split it into
\[
u = u^{in} + u^{out}
\]
where
\[
u^{in} = \sum_{k < 0} T_k S_k u
\]
and  $T_k$ is the operator
\[
T_k v = v(t,0) \psi_k(x). 
\]

For frequencies $k > 0$, we
have the dyadic bound
\[
\| \la x \ra^{-1} S_k u\|_{L^2} \lesssim \|S_k u\|_{X_k}  
\]
which we can easily sum over $k$ to obtain
\[
\| \la x \ra^{-1} S_{>0} u\|_{L^2} \lesssim \|u\|_{\dX}.
\]

For frequencies $k < 0$ it  is easy to see that 
\begin{equation}\label{Tkbdd}
\|(1-T_k) S_k u \|_{X_k} \lesssim \|S_k u \|_{X_k}
\end{equation}
follows from the bound
\begin{equation}\label{bernstein}
  \|\chi_{<-k} S_k u\|_{L^2_t L^\infty_x}\lesssim 2^{\frac{n-1}{2}k}\|S_k u\|_{X_k},\quad k\le 0.
\end{equation}
which is a consequence of Bernstein's inequality.

The gain is that $(1-T_k) S_k u(t,0)=0$. This leads to the improved
pointwise bound
\[
 |x|^{-1}|(1-T_k) S_k u| \lesssim 2^{\frac{n+1}2 k} \|S_k u \|_{X_k},\quad |x|<2^{-k}
\]
and further to the improved $L^2$ bound
\begin{equation}
\sup_{j} \| (2^k|x|+2^{-k}|x|^{-1})^\frac12 |x|^{-1}   (1-T_k) S_k
u\|_{L^2(A_j)}  \lesssim 2^{\frac{k}2} \|S_k u \|_{X_k}.
\label{impl2}\end{equation}
Then, by orthogonality with respect 
to spatial dyadic regions,  we can sum up
\[
\| \la x\ra^{-1}   \sum_{k < 0} (1-T_k) S_k u \|_{L^2}^2  \lesssim \|u\|_{\dX}^2 
\]
which combined with the previous high frequency bound yields
\begin{equation}
\|\la x \ra^{-1} u^{out}\|_{L^2} \lesssim \|u\|_{\dX}.
\label{l2uout}\end{equation}

For the terms in $u^{in}$, differentiation yields a $2^k$ factor, and
therefore we can estimate
\begin{equation}
\| u^{in}\|_{\dot H^1} \lesssim \|u\|_{\dX}.
\label{l2uin}\end{equation}
It remains to prove the bounds
\begin{equation}
\| \la x \ra^{-1} v \|_{L^2} \lesssim \|v\|_{L^2(B(0,1))} + \|
v\|_{\dot H^1},
\qquad n=1
\label{n=1emb} 
\end{equation}
respectively
\begin{equation}
\| \la x \ra^{-1} (\ln (1+ \la x \ra))^{-1}
 v \|_{L^2} \lesssim \|v\|_{L^2(B(0,1))} + \|
v\|_{\dot H^1},
\qquad n=2.
\label{n=2emb} 
\end{equation}

Due to the first factor in the right of both estimates, we may without loss of generality take
$v$ to vanish in $B(0,1/2)$.
For \eqref{n=1emb} we integrate
\[
2 \int_{1/2}^R  x^{-1} v v_x dx = \int_{1/2}^R  x^{-2} v^2 dx  + R^{-1}
v^2(R).
\]
Using Cauchy-Schwarz the conclusion follows.

For \eqref{n=2emb} we argue in a similar fashion.
We have
\begin{multline*}
2 \int_{B_R \setminus B_{1/2}}  |x|^{-2} (\ln (2+|x|^2))^{-1} v x \nabla v dx = 
\int_{B_R \setminus B_{1/2}}  (2+|x|^2)^{-1} (\ln (2+|x|^2))^{-2}  v^2 dx 
\\ + \int_{\partial B_R}|x|^{-1} (\ln (2+|x|^2))^{-1} v^2   d\sigma
\end{multline*}
and conclude again by Cauchy-Schwarz. The lemma is proved. \qed

On a related note, we include here another  result which
simplifies the type of local error terms we allow in the non-trapping
case.

\begin{lemma}
Let $n \geq 1$ and $R > 0$. Then for each $\e > 0$ there is 
$m_\epsilon > 0$ and $c_\epsilon > 0$
so that
\begin{equation}\label{dXerror}
\| \la x \ra^{-\frac32} u\|_{L^2} \leq \e \|u\|_{\dX} + c_\epsilon
\|S_{<m_\e}u\|_{L^2(\{|x| < R\})}.
\end{equation}
\end{lemma}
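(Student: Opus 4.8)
The statement is a quantitative interpolation/absorption lemma: it says that the weight $\la x\ra^{-3/2}$, which is integrable against the $\dX$ norm only because of the Hardy/BMO inequalities \eqref{Hardy}, \eqref{Hardylow}, can be controlled by an arbitrarily small multiple of $\|u\|_{\dX}$ plus a large-constant error that sees only low frequencies in a fixed ball. The decay rate $\tfrac32$ beats $\tfrac12$ by a full power of $\la x\ra$, and this extra decay is what allows the small constant $\e$ out front. I would split into three regimes: high frequencies $k \ge m_\e$, low frequencies $k < m_\e$ in the far region $|x| > R$, and low frequencies in the near region $|x| < R$. The last piece is exactly the error term; the first two must be absorbed into $\e\|u\|_{\dX}$.

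\emph{High frequencies.} For $k \ge 0$ the $X_k$ norm already controls $\|\la x\ra^{-1/2} S_k u\|_{L^2(A_j)}$ for each $j \ge 0$, so
\[
\| \la x\ra^{-3/2} S_k u\|_{L^2(A_j)} \lesssim 2^{-j} \|\la x\ra^{-1/2} S_k u\|_{L^2(A_j)} \lesssim 2^{-j}\|S_k u\|_{X_k},
\]
and summing the geometric series in $j$ gives $\|\la x\ra^{-3/2}S_k u\|_{L^2} \lesssim \|S_k u\|_{X_k}$, hence $\|\la x\ra^{-3/2}S_{>0}u\|_{L^2}^2 \lesssim \sum_{k>0} \|S_k u\|_{X_k}^2 \lesssim \sum_{k>0} 2^k\|S_k u\|_{X_k}^2 = \|S_{>0}u\|_{\dX}^2$ (here the factor $2^k \ge 1$ only helps). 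So the high-frequency piece is bounded by $\|u\|_{\dX}$ with an absolute constant; this is already small if we instead only keep $k \ge m_\e$ — wait, that does not gain smallness. The correct gain comes from the \emph{low} frequencies, so I reorganize: the bound above shows $\|\la x\ra^{-3/2}S_{\ge m_\e}u\|_{L^2} \lesssim \|u\|_{\dX}$ with an absolute constant, which is \emph{not} small. The smallness must instead be extracted from the weight's extra decay interacting with the spatial localization, not from frequency truncation at high frequency.

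\emph{Reorganization.} The right way: for \emph{every} frequency $k$, the $X_k$ norm controls $\|(\,|x|+2^{-k})^{-1/2} S_k u\|_{L^2(A_j)}$ (reading both definitions of $X_k$ uniformly), so on the region $|x| > R$ one gains $\la x\ra^{-3/2} \le R^{-1}\la x\ra^{-1/2} \lesssim R^{-1}(\,|x|+2^{-k})^{-1/2}$ for $k \ge -\log_2 R$, and more generally $\la x\ra^{-1} \le R^{-1}$ uniformly there. Summing over the dyadic regions $A_j$ with $2^j > R$ and over $k$ gives
\[
\| \la x\ra^{-3/2} u\|_{L^2(\{|x|>R\})} \lesssim R^{-1}\|u\|_{\dX},
\]
so choosing $R$... but $R$ is given. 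Instead: choose $m_\e$ large. On $|x| < R$, frequencies $k \ge m_\e$ are controlled by the $\dX$ norm with the weight $2^{-k}$ to spare (since $|x|+2^{-k} \ge 2^{-k}$ and the $2^k$ weight in $\|u\|_{\dX}^2$ gives room), yielding $\|\la x\ra^{-3/2}S_{\ge m_\e}u\|_{L^2(\{|x|<R\})} \lesssim 2^{-m_\e/2}\langle R\rangle^{C}\|u\|_{\dX}$, small for $m_\e$ large. Combining: the far region and the high-frequency-near part together are $\lesssim (R^{-1} + 2^{-m_\e/2}\langle R\rangle^C)\|u\|_{\dX}$; since $R$ is fixed we absorb the $R^{-1}$ term by \emph{also} using Hardy \eqref{Hardy} on the far region (which already gives $\|\la x\ra^{-1}u\|_{L^2(\{|x|>R\})} \le \|\la x\ra^{-1}u\|_{L^2} \lesssim \|u\|_{\dX}$, then pick up $R^{-1/2}$ from $\la x\ra^{-3/2}$) — honestly, the extra half-power $R^{-1/2}$ does not make the fixed-$R$ term small. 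The clean fix is: handle the far region via \eqref{Hardy}/\eqref{Hardylow} to get an absolute-constant bound there, handle near high frequencies to get a $2^{-m_\e/2}$-small bound, and accept that the \emph{near low frequencies} $S_{<m_\e}u$ on $\{|x|<R\}$ are the error term $c_\e\|S_{<m_\e}u\|_{L^2(\{|x|<R\})}$ — but then the far region contributes a fixed constant, not $\e$.

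\emph{Resolution of the apparent gap.} The point I was missing: on the far region $\{|x| > R\}$ one does \emph{not} need smallness from $R$ at all, because there the full $\dX$ norm restricted to $\{|x|>R\}$ can be made small relative to the global one only if... no. Rather, reexamine: the lemma as stated has $\e$ multiplying the \emph{full} $\dX$ norm, and $R$ is the \emph{same} ball in both the hypothesis-free far region and the error term. So the genuinely correct split is just two pieces. \textbf{First}, for the region $|x|>R$ together with all frequencies, use that $\la x\ra^{-3/2} = \la x\ra^{-1}\cdot\la x\ra^{-1/2}$ and that on each $A_j$ with $2^j \gtrsim R$ the factor $\la x\ra^{-1}$ is at most $\sim 2^{-j}$; summing the resulting geometric series in $j \ge \log_2 R$ against $\sup_j\|\la x\ra^{-1/2}S_k u\|_{L^2(A_j)} \le \|S_k u\|_{X_k}$ produces a bound $\lesssim 2^{-\log_2 R}\|u\|_{\dX} = R^{-1}\|u\|_{\dX}$ — and this \emph{is} what we want only if $R$ is large, which it is not guaranteed to be. \textbf{Second}, for $|x| < R$ split into $S_{<m_\e}$ and $S_{\ge m_\e}$; the latter is $\lesssim 2^{-m_\e}\langle R\rangle^{3}\|u\|_{\dX}$ type bound (small for $m_\e$ large).

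Given the sign of $\e$ in the statement and that this is a \emph{lemma} used perturbatively, I believe the intended reading is: the constant implicit in $\lesssim$ for the far region already carries a decaying factor, OR the lemma is only invoked with $R$ taken large — in any case the clean proof I would write is: \textbf{(1)} Estimate $\|\la x\ra^{-3/2}u\|_{L^2(\{|x|>R\})}$ by the Hardy inequality \eqref{Hardy} (for $n\ge3$) or the BMO inequality \eqref{Hardylow} (for $n=1,2$, where one must subtract dyadic averages $u_{D_j}$ and note the extra $\la x\ra^{-1/2}$ decay beats the logarithmic loss) — and extract a factor $R^{-1/2}$ from the surplus decay, giving $\le C R^{-1/2}\|u\|_{\dX}$. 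For \emph{fixed} $R$, then \textbf{choose $m_\e$ large} to make the near-region high-frequency piece $\|\la x\ra^{-3/2}S_{\ge m_\e}u\|_{L^2(\{|x|<R\})} \le \e\|u\|_{\dX}/2$ via Bernstein \eqref{bernstein} and summation of $2^{(n-1)k/2}\cdot 2^{-k/2}\cdot R^{?}$ against $2^{k/2}\|S_k u\|_{X_k}$. \textbf{(2)} If $CR^{-1/2} > \e$, enlarge the ball: replace $R$ by $R' \ge (C/\e)^2$, absorb the far piece, and bound the annulus $\{R<|x|<R'\}$ at low frequencies $S_{<m_\e}$ — but this annulus is compact, so $\|\la x\ra^{-3/2}S_{<m_\e}u\|_{L^2(\{R<|x|<R'\})} \lesssim_{R',m_\e} \|S_{<m_\e}u\|_{L^2(\{|x|<R'\})}$, and one uses \eqref{bernstein} plus an almost-orthogonality argument to reduce $\|S_{<m_\e}u\|_{L^2(\{|x|<R'\})}$ to $\|S_{<m_\e}u\|_{L^2(\{|x|<R\})}$ modulo $\e\|u\|_{\dX}$. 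Then rename $c_\e$ to absorb all $R,R',m_\e$-dependence.

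\textbf{Main obstacle.} The delicate point is step (2): converting a bound with the error term on a \emph{large} ball $\{|x|<R'\}$ into one on the \emph{given} ball $\{|x|<R\}$. This requires a finite-propagation-of-information or almost-orthogonality estimate showing that low-frequency ($|\xi| < 2^{m_\e}$) functions that are small in $L^2(\{|x|<R\})$ but live at spatial scale $\lesssim R$ are controlled there — essentially that $S_{<m_\e}$ is "almost local" at scale $2^{-m_\e}$, so its $L^2$ mass on $\{R < |x| < R'\}$ leaks from $\{|x| < R + O(2^{-m_\e})\}$ up to rapidly decaying tails. Concretely: $\|S_{<m_\e}u\|_{L^2(\{|x|<R'\})} \lesssim \|S_{<m_\e}u\|_{L^2(\{|x|<R\})} + \|\la x\ra^{-N}u\|_{L^2}$ for any $N$ by the Schwartz decay of the convolution kernel of $S_{<m_\e}$, and the tail $\|\la x\ra^{-N}u\|_{L^2} \lesssim \|u\|_{\dX}$ can be made $\le \e\|u\|_{\dX}$ after one more frequency truncation. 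Assembling these pieces and tracking the dependence of $c_\e$ and $m_\e$ on $\e$ (and the ambient $R$) is the bulk of the work; everything else is the routine dyadic summation already exemplified in Section~\ref{embxs}.
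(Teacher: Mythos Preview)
Your proposal contains a genuine gap at precisely the point you flag as the main obstacle. The claimed inequality
\[
\|S_{<m_\e}u\|_{L^2(\{|x|<R'\})} \lesssim \|S_{<m_\e}u\|_{L^2(\{|x|<R\})} + \|\la x\ra^{-N}u\|_{L^2}
\]
is simply false, and Schwartz decay of the convolution kernel of $S_{<m_\e}$ does not yield it. That decay says $S_{<m_\e}u(x)$ depends (up to tails) only on $u$ near $x$; it says nothing about where $S_{<m_\e}u$ itself is supported. For a counterexample, take $u$ to be a smooth bump at frequency $\ll 2^{m_\e}$ supported in the annulus $\{2R<|x|<R'/2\}$: then $S_{<m_\e}u\approx u$, the left side is $\sim\|u\|_{L^2}$, the first right-hand term is essentially zero, and the second is $\lesssim (2R)^{-N}\|u\|_{L^2}$, which can be made as small as you like. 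The inequality collapses.

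What is really needed to pass from the large ball to the small ball is not kernel locality but \emph{unique continuation for band-limited functions}: a function with compact Fourier support is real-analytic, so if it vanishes on $\{|x|<R\}$ it vanishes identically. The paper exploits this through a soft compactness argument rather than a direct estimate. After peeling off high frequencies and the far spatial region (your steps are fine there, modulo the same bookkeeping the paper does), one is reduced to showing, for fixed large $m,k$,
\[
\|\la x\ra^{-\frac32}\chi_{<k}S_{<m}u\|_{L^2}\le \|\la x\ra^{-\frac32}\chi_{>k}S_{<m}u\|_{L^2}+c_{k,m}\|S_{<m}u\|_{L^2(\{|x|<R\})}.
\]
Assuming this fails, normalize a violating sequence; the functions $\la x\ra^{-3/2}S_{<m}u_j$ are then bounded in every $H^N$, so by compactness a subsequence of $S_{<m}u_j$ converges locally uniformly to some $u$ which is frequency-supported in $\{|\xi|<2^{m+1}\}$, satisfies $\|u\|_{L^2(\{|x|<R\})}=0$, yet has $\|\la x\ra^{-3/2}\chi_{<k}u\|_{L^2}=1$. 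Analyticity then forces $u\equiv 0$, a contradiction. This compactness-plus-analyticity mechanism is the missing idea in your sketch; no amount of kernel-tail bookkeeping will substitute for it.
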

 
\begin{proof}
Frequencies in $u$ which are large enough can be estimated solely by
the first term on the right.  It remains to show that for large $m$ we
have
\[
\| \la x \ra^{-\frac32} S_{<m} u\|_{L^2} \leq \e \| S_{<m} u\|_{\dX} + c_{\epsilon,m}
\|S_{<m} u\|_{L^2(\{|x| < R\})}.
\]
For large $x$ the left hand side can also be estimated solely by the
first term on the right.  It remains to show that for large $m,k$ we
have
\[
\| \la x \ra^{-\frac32} \chi_{<k} S_{<m} u\|_{L^2} \leq \| \la x \ra^{-\frac32} \chi_{>k} S_{<m} u\|_{L^2} + c_{k,m}
\|S_{<m} u\|_{L^2(\{|x| < R\})}.
\]
We argue by contradiction. Suppose this is false. Then there exists
a sequence $u_j \in \dX$ so that 
\[
\| \la x \ra^{-\frac32} \chi_{<k} S_{<m} u_j\|_{L^2} = 1, \qquad
\| \la x \ra^{-\frac32} \chi_{>k} S_{<m} u_j\|_{L^2} < 1, \qquad 
\|S_{<m} u_j\|_{L^2(\{|x| < R\})} \to 0.
\]
The functions $ \la x \ra^{-\frac32}  S_{<m} u_j$ are uniformly
bounded in all Sobolev spaces $H^{N}(\R^n)$; therefore on a subsequence
we have uniform convergence on compact sets,
\[
 S_{<m} u_j \to u.
\]
Then the function $u$ 
satisfies
\[
\| \la x \ra^{-\frac32} \chi_{<k} u\|_{L^2} = 1, \qquad
\| \la x \ra^{-\frac32} \chi_{>k}  u\|_{L^2} < 1, \qquad 
\|u\|_{L^2(\{|x| < R\})} = 0.
\]
But $u$ is also frequency localized in $|\xi| < 2^{m+1}$ and
is therefore analytic. Then the last condition above implies $u = 0$ which is a 
contradiction.
\end{proof}

\subsection{Paradifferential calculus}

Here, we seek to frequency localize the coefficients of $P$.
A similar argument is present in  \cite{T}, where for solutions at 
frequency $2^k$ the coefficients are localized at frequency
\[
|\xi| \ll 2^{k/2} \la x\ra^{-1/2}.
\]
Such a strong localization was essential there in order to carry out
the parametrix construction. Here we are able to keep the setup
simpler and use a classical paradifferential construction, where for
solutions at frequency $2^k$ the coefficients are localized at
frequency below $ 2^k$. For a fixed frequency scale $2^k$, we set
\begin{align*}
a^{ij}_{(k)} = S_{<k-4} a^{ij},
\end{align*}
and we define the associated mollified operators
\[
A_{(k)}= D_i a^{ij}_{(k)}D_j.
\]
It is easy to verify that the mollified coefficients $a^{ij}_{(k)}$
satisfy the bounds
\begin{equation}\label{coeffak.greater} 
\begin{split}
|\partial^\alpha (a^{ij}_{(k)}-I_n)|&\lesssim \kappa_k(|x|)
\la x \ra ^{-|\alpha|},\qquad 
|\alpha|\le 2, \quad  k>0
\\
|\partial^\alpha (a^{ij}_{(k)}-I_n)|&\lesssim \kappa_k(|x|)
{2^{|\alpha| k}}{\la 2^k x \ra^{-|\alpha|}},\quad 
|\alpha|\le 2, \quad  k\leq 0.
\end{split}
\end{equation}

The next proposition will be used to pass back and forth between $A_{(k)}$
and $A$. We first define
\[
\tilde{A}=\sum_k A_{(k)}S_k.
\]

\begin{prop}
\label{lemma.A.to.Ak}
Assume that the coefficients $a^{ij}$ satisfy \eqref{coeff}, and that
$b=0$, $c=0$.  Then
\begin{equation}
  \label{aak}
  \sum_{k} 2^{-k} \|S_k (A-A_{(k)}) u\|_{X_k'}^2 \lesssim 
\kappa^2 \|u\|_{\dX}^2, 
\end{equation}
\begin{equation}\label{amta}
\|(A-\tilde{A})u\|_{\dX'}\lesssim \kappa \|u\|_{\dX},
\end{equation}
\begin{equation}
  \label{comak}
2^{-k} \|[A_{(k)},S_k] u\|_{X'_k}\lesssim \kappa \|u\|_{X_k}.
\end{equation}
\end{prop}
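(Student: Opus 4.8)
The plan is to derive all three bounds from a single paradifferential analysis, reducing each to a frequency-by-frequency estimate in which three ingredients recur. First, the kernels of the $S_m$ have vanishing moments (since $s_m$ vanishes near the origin), so $S_m I_n = 0$ and, combined with \eqref{kappa_j}, on a shell $A_j$ one has the bound $|S_m(a^{ij}-I_n)| \lesssim \kappa_j\min(1,\,2^{-2m}\la x\ra^{-2})$ for $k>0$ (and the analogue with $\la 2^k x\ra$ for $k\le0$, using \eqref{coeffak.greater}); thus the high-frequency truncation $a^{ij}-a^{ij}_{(k)} = S_{\ge k-4}a^{ij}$ is of size $\lesssim \kappa_j$, improved to $\lesssim\kappa_j 2^{-2(k+j)}$ once $2^j\gtrsim 2^{-k}$. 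Second, the $X_k$ norm is an $\ell^\infty$ over the dyadic spatial shells $A_j$ of weighted $L^2$ norms, so $X_k'$ is the corresponding $\ell^1$; hence multiplication by a symbol bounded by $\kappa_j\la x\ra^{-1}$ (resp.\ $\kappa_j(|x|+2^{-k})^{-1}$) on $A_j$ maps $X_k\to X_k'$ with norm $\lesssim\sum_j\kappa_j\lesssim\kappa$, the extra $\la x\ra^{-1}$ exactly absorbing the weight gap between the two spaces. Third, frequency-localized operators with smooth symbols (such as $D_j$, or the averaging operators produced by commutators) are bounded on the $X_k$ spaces with the expected norms.

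I would prove \eqref{comak} first, as it is the simplest. Since $D_i, D_j$ commute with $S_k$, we have $[A_{(k)},S_k] = D_i[a^{ij}_{(k)},S_k]D_j$, and because $a^{ij}_{(k)}$ is frequency-localized at scales $\ll 2^k$ the commutator $[a^{ij}_{(k)},S_k]$ gains one derivative: writing $[a,S_k]u(x) = \int K_k(x-y)(a(x)-a(y))u(y)\,dy$ and using $|x-y|\lesssim 2^{-k}$, its kernel is $2^{-k}$ times $|\partial_x a^{ij}_{(k)}|$ times a bounded frequency-localized averaging operator. Acting on functions at frequency $2^k$, the outer $D_i$ contributes a factor $2^k$ and $D_j$ another, so $2^{-k}\|[A_{(k)},S_k]u\|_{X_k'} \lesssim \bigl\||\partial_x a^{ij}_{(k)}|\,\tilde u\bigr\|_{X_k'}$, where $\tilde u$ is $u$ modified by a bounded frequency-localized operator. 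By \eqref{coeffak.greater}, $|\partial_x a^{ij}_{(k)}| \lesssim \kappa_j\la x\ra^{-1}$ for $k>0$ and $\lesssim\kappa_j 2^k\la 2^k x\ra^{-1}\lesssim\kappa_j(|x|+2^{-k})^{-1}$ for $k\le0$, so the weighted-multiplier observation of the previous paragraph gives \eqref{comak}.

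For \eqref{aak}, expand $A - A_{(k)} = D_i(S_{\ge k-4}a^{ij})D_j$ and decompose $u = \sum_l S_l u$, $\;S_{\ge k-4}a^{ij} = \sum_{m\ge k-4}S_m a^{ij}$. Applying the outer $S_k$ and using the frequency-support constraints (recalling $m\ge k-4$, so the coefficient is never strictly low relative to a frequency-$2^k$ output), only three families survive modulo rapidly decaying tails: the balanced term $m\sim l\sim k$, the coefficient-high term $m\sim k\gg l$, and the high--high term $m\sim l\gg k$. In each, the two derivatives contribute $2^k\cdot 2^l$, the coefficient contributes $\kappa_j$ (improved by negative powers of $2^{k+j}$ where available), one bounds $\|S_l u\|_{L^2(A_j)}$ by the $X_l$ weight on $A_j$ times $\|S_l u\|_{X_l}$, and one sums over the shells using $\sum_j\kappa_j\lesssim\kappa$ — keeping $\sum_{j\ge -k}2^{-j}\lesssim 2^k$ sharp when $k\le0$. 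The upshot is, per frequency, $2^{-k/2}\|S_k(A-A_{(k)})u\|_{X_k'} \lesssim \kappa\sum_l c_{kl}\,2^{l/2}\|S_l u\|_{X_l}$ with $(c_{kl})$ geometrically decaying away from $k=l$ (the decay for $k\ll l$ in the high--high family coming from a Bernstein gain at the small output frequency together with the smallness $\lesssim 2^{-2m}\la x\ra^{-2}$ of $S_m a^{ij}$); squaring and summing in $k$ by Schur's test gives \eqref{aak}. For \eqref{amta}, write $(A-\tilde A)u = \sum_l(A - A_{(l)})S_l u$ and run the same decomposition with the index $l$ tied to the $S_l$ on $u$ rather than to the outer frequency; the per-pair estimate $2^{-k/2}\|S_k(A-A_{(l)})S_l u\|_{X_k'}\lesssim\kappa c_{kl}2^{l/2}\|S_l u\|_{X_l}$ is identical, and Schur's test again yields $\|(A-\tilde A)u\|_{\dX'}\lesssim\kappa\|u\|_{\dX}$.

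The analysis is conceptually routine; the real work is in the bookkeeping. One must match three weight systems (the $X_k$ norm for $k>0$, for $k\le0$, and their duals) against the coefficient bounds, which themselves change shape across $|x|\sim 2^{-k}$; keep every frequency off-diagonal sum geometric, in particular verifying the Bernstein gain in the high--high interaction at low output frequency; and — most delicately — never lose the full power of $\la x\ra$ (or $\la 2^k x\ra$) that makes the smallness be governed by $\kappa = \sum_j\kappa_j$ rather than by $\sup_j\kappa_j$. That power is available precisely because $A$ is in divergence form, so in the commutator exactly one derivative falls on the coefficient, and because $a^{ij}-a^{ij}_{(k)}$ is a high-frequency truncation of a function whose gradient is $O(\kappa_j\la x\ra^{-1})$; tracking it correctly through the $k\le0$ regime is the fiddliest step.
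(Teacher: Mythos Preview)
Your approach is correct and aligns with the paper's: the same paradifferential splitting of $S_k(A-A_{(k)})$ into a piece with coefficient frequency $\sim 2^k$ and input frequency $\le 2^k$ (your ``balanced'' and ``coefficient-high'' together, the paper's $A_k^{med}$) and a high--high piece (your $m\sim l\gg k$, the paper's $A_k^{high}$), the same off-diagonal estimate \eqref{dplow3.small} proved by case analysis on the signs of $k,m$ with Bernstein for the inner spatial region, and the same treatment of \eqref{comak} via the $|\alpha|=1$ bound in \eqref{coeffak.greater}. The only organizational differences are that the paper economizes twice by duality --- it gets the high--high piece from the $A_k^{med}$ estimate by self-adjointness rather than arguing it directly, and it obtains \eqref{amta} from \eqref{aak} via $\langle (A-\tilde A)u,v\rangle = \sum_k \langle S_k u,(A-A_{(k)})v\rangle$ and Cauchy--Schwarz rather than rerunning the frequency analysis --- whereas you redo those pieces by hand; both routes work.
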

\begin{proof}[ Proof of Lemma \ref{lemma.A.to.Ak}:]
We begin by writing
\[
S_k(A- A_{(k)})=A_k^{med}+A_k^{high}
\]
with
\begin{align*}
A_k^{med}&= 
 \sum_{l=k-4}^{k+4} \sum_{m=-\infty}^{k+8} S_k D_i(S_l a^{ij})
D_j S_m \\
A_k^{high}&=  \sum_{l>k+4} \sum_{m=l-4}^{l+4} 
S_k D_i (S_la^{ij})D_j S_m.
\end{align*}

For $A_k^{med}$ we take $l=k \geq m$ for simplicity; then it suffices 
 to establish the off-diagonal decay 
\begin{equation}
  \label{dplow3.small}
\|S_k D_i(S_k a^{ij}D_j S_m v)\|_{X'_k}\lesssim \kappa
2^m \|S_m v\|_{X_m}.
\end{equation}

If $k \geq m \geq 0$ then we have
\[
\begin{split}
\|S_k D_i(S_k a^{ij}  D_j S_m v)\|_{X'_k}
&\ \lesssim 2^k \|S_k a^{ij} D_j S_m v \|_{X'_k}
\\ &\ \lesssim \kappa 2^{-k} \| \la x\ra^{-2} D_j S_m v \|_{X'_k}
\\ &\ \lesssim \kappa 2^{-k} \| D_j S_m v \|_{X_m}
\\ &\ \lesssim \kappa 2^{m-k} \|S_m v \|_{X_m}.
\end{split}
\]

If $k \geq 0 > m$ then we have two spatial scales to deal with, namely
$1$ and $2^{-m}$.  To separate them we use the cutoff function
$\chi_{<-m}$. For contributions corresponding to large $x$ we estimate
\[
\begin{split}
\|S_k D_i(S_k a^{ij} \chi_{\ge -m} D_j S_m v)\|_{X'_k}
&\ \lesssim 2^k \|S_k a^{ij} \chi_{\ge -m} D_j S_m v \|_{X'_k}
\\ &\ \lesssim \kappa 2^{-k} \| |x|^{-2}   \chi_{\ge -m} D_j S_m v \|_{X'_k}
\\ &\ \lesssim \kappa 2^{m-k} \| D_j S_m v \|_{X_m}
\\ &\ \lesssim \kappa 2^{2m-k} \| S_m v \|_{X_m}.
\end{split}
\]
For contributions  corresponding to small $x$, we first note that by 
Bernstein's inequality, see \eqref{bernstein}, we have 
\begin{equation}\label{bern.2}
\| D_j S_m v\|_{L^2_t L_x^\infty(A_{\leq -m})} \leq 2^{\frac{n+1}2 m} \|S_m v\|_{X_m}.
\end{equation}
Then
\[
\begin{split}
\|S_k D_i(S_k a^{ij} \chi_{<-m} D_j S_m v)\|_{X'_k}
&\ \lesssim 2^k \|S_k a^{ij} \chi_{<-m} D_j S_m v\|_{X'_k}
\\
&\ \lesssim 2^{-k} 2^{\frac{n+1}2 m} \|\la x \ra^{-2}  \chi_{<-m}
\kappa(|x|)\|_{(X_k^0)'} \|S_m v\|_{X_m}
\\
&\ \lesssim \kappa 2^{-k}2^{\frac{n+1}2 m} \max\{ 1, 2^{\frac{3-n}{2} m} \} \|S_m v\|_{X_m}
\\
&\ \lesssim \kappa 2^{-k}  \max\{ 2^{\frac{n+1}2 m}, 2^{2 m} \} \|S_m v\|_{X_m}
\end{split}
\]
where $(X_k^0)'$ is the spatial part of the $X_k'$ norm, i.e. $X_k' = L^2_t (X_k^0)'$.

Finally if $ 0 >k \geq  m$ then the spatial scales  are $2^{-k}$ and
$2^{-m}$, and we separate them using the cutoff function
$\chi_{<-m}$. The exterior part is exactly as in the previous case. For
the interior part we use again \eqref{bern.2} 
to compute
\[
\begin{split}
\|S_k D_i(S_k a^{ij} \chi_{<-m} D_j S_m v)\|_{X'_k}
&\ \lesssim 2^k \|S_k a^{ij} \chi_{<-m} D_j S_m v\|_{X'_k}
\\
&\ \lesssim 2^{k} 2^{\frac{n+1}2 m} \|\la 2^k x \ra^{-2}  \chi_{<-m}
\kappa(|x|)\|_{(X_k^0)'} \|S_mv\|_{X_m}
\\
&\ \lesssim \kappa 2^{k}2^{\frac{n+1}2 m} 
\max\{ 2^{-\frac{n+1}2 k}, 2^{-2k} 
2^{\frac{3-n}{2} m} \} \|S_m v\|_{X_m}
\\
&\ \lesssim \max\{ 2^{\frac{1-n}2 k}  2^{\frac{n+1}2 m}, 2^{-k} 2^{2 m} \} \|S_m v\|_{X_m}.
\end{split}
\]
Hence \eqref{dplow3.small} is proved, which by summation yields the bound
\eqref{aak} for $A_k^{med}$.  The bound for $A_k^{high}$ follows from
 summation of \eqref{dplow3.small} in a duality argument.

We note that in all cases there is some room to spare in the
estimates. This shows that our hypothesis is too strong for this
lemma. Indeed, one could prove it without using at all the bound on
the second derivatives of the coefficients.

The bound \eqref{amta} follows by duality from \eqref{aak}.  The proof
of \eqref{comak}, as in \cite{T}, follows from the $|\alpha|=1$ case
of \eqref{coeffak.greater}.  
\end{proof}


The next  proposition   allows us to treat lower order terms perturbatively 
in most of our results.

\begin{prop}\label{lemma.lower.order}
  a)  Assume that  $b,c$ satisfy \eqref{coeffb} and \eqref{coeffc}.
Then
 \begin{equation}\label{bc}
 \|(b^i D_i + D_i b^i +c)u\|_{\tX'}\lesssim \kappa \|u\|_{\tX}.
  \end{equation}
 
b) Assume that  $b$ satisfies \eqref{coeffb} and $\div \ b=0$.
Then
 \begin{equation}\label{bnoc}
 \|(b^i D_i + D_i b^i)u\|_{\dX'}\lesssim \kappa \|u\|_{\dX}.
\end{equation}

\end{prop}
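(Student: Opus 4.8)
The plan is to split $b^iD_i+D_ib^i+c$ into a first order operator $b^iD_i$ plus a zero order operator, and to treat the first order part by a paradifferential trichotomy. Since $D_ib^i=b^iD_i-i(\div b)$ as operators, in part (a) it suffices to bound $b^iD_i$ together with the zero order operator $c-i\,\div b$ (both $|c|$ and $|\div b|$ are controlled by \eqref{coeffc}), and in part (b), where $\div b=0$, only $b^iD_i$ remains. The zero order piece is immediate, and is the only term for which $\|u\|_{\tX}$ on the right and the Hardy inequality \eqref{Hardy} are actually needed: from \eqref{coeffc}, $|c|+|\div b|\lesssim\kappa\la x\ra^{-2}$ (times $(\ln\la x\ra)^{-2}$ when $n=2$), so $cu$ lies in the summand $\la x\ra L^2_{t,x}$ (resp.\ $\la x\ra(\ln(2+|x|))L^2_{t,x}$) of $\tX'$, with $\|\la x\ra^{-1}cu\|_{L^2}\lesssim\kappa\|\la x\ra^{-1}u\|_{L^2}\lesssim\kappa\|u\|_{\tX}$, invoking \eqref{Hardy} when $n\ge3$ and the (logarithmically weighted, for $n=2$) $L^2$ summand of $\|\cdot\|_{\tX}$ when $n=1,2$. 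It therefore remains to prove $\|b^iD_iu\|_{\dX'}\lesssim\kappa\|u\|_{\dX}$, which, since $\|f\|_{\tX'}\le\|f\|_{\dX'}$ and $\|u\|_{\dX}\le\|u\|_{\tX}$, gives the first order parts of both \eqref{bc} and \eqref{bnoc}.

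For $b^iD_i$ I would follow the scheme of the proof of Proposition~\ref{lemma.A.to.Ak}. Write $b^iD_iu=\sum_{k,m}S_k(b^iD_iS_mu)$ and, for each $(k,m)$, split according to the frequency $2^l$ of the relevant block $S_lb^i$ of the coefficient: $l\lesssim k$ when $|k-m|\le 8$; $l\approx k$ when $m<k-8$; $l\approx m$ when $m>k+8$. In the first two cases $D_i$ lands on $S_mu$, of frequency $2^{\min(k,m)}$, so it costs only $2^{\min(k,m)}$, and one estimates $\|S_k(\cdot)\|_{X_k'}$ through $\sum_j\|\la x\ra^{1/2}(\cdot)\|_{L^2(A_j)}$, placing $\la x\ra|b^i|$ in $L^\infty(A_j)$ and $\la x\ra^{-1/2}D_iS_mu$ in $L^2(A_j)$. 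What makes the sum over $j$ close is the fact — already recorded in Section~\ref{not_para}, the analogue of \eqref{coeffak.greater} — that the $\ell^1$-in-$j$ structure of \eqref{coeffb} survives frequency localization, i.e.\ $\sum_j\sup_{A_j}\la x\ra|S_lb^i|\lesssim\kappa$ for every $l$ (a short kernel–decay estimate). Hence the $\ell^1_j$ Hölder pairing absorbs the $\ell^1_j$ predual structure of $X_k'$ against the $\sup_j$ appearing in $\|S_mu\|_{X_m}$, yielding $\|S_k(\cdot)\|_{X_k'}\lesssim\kappa\,2^{\min(k,m)}\|S_mu\|_{X_m}$; after inserting the $2^{-k/2}$ and $2^{m/2}$ weights of $\dX'$ and $\dX$, this is a geometrically off-diagonal kernel acting on the $\ell^2$ sequence $d_m:=2^{m/2}\|S_mu\|_{X_m}$, summed by Schur's test. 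The subcases $k\le 0$ and $m\le 0$ use the corresponding scale-$2^{-k}$, $2^{-m}$ weighted norms exactly as in Proposition~\ref{lemma.A.to.Ak}.

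The remaining, and hardest, group is $m>k+8$ ($b$ at the higher frequency): only the blocks of $b^i$ near $2^m$ contribute, and the derivative $D_iS_mu$ now carries the large factor $2^m$, non-summable against $d_m$. Because $b$ has no regularity hypothesis, this derivative cannot be shifted onto $b$ by symbol calculus; instead I would integrate by parts, with $\tilde S_m:=\sum_{|l-m|\le 4}S_l$,
\[
\tilde S_m b^i\,D_iS_mu=D_i\bigl(\tilde S_m b^i\,S_mu\bigr)-\bigl(D_i\tilde S_m b^i\bigr)S_mu .
\]
In the first term $D_i$ is now on the outside, so after $S_k$ it costs only $2^k$, and $\tilde S_m b^i\,S_mu$ is estimated as in the previous cases via the surviving $\ell^1_j$ structure of $b$; since $m\ge k$ the resulting kernel $\sim 2^{(k-m)/2}$ is again off-diagonal and Schur–summable. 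The second term equals $i\,\tilde S_m(\div b)\,S_mu$: it is identically zero in part (b), and in part (a) it is of the same zero order type as in the reduction step ($\div b$ obeys \eqref{coeffc}) and is absorbed either by the weighted-$L^2$ embedding or, block by block, by the $X_k'$ estimate together with $\sum_j 2^{-j}<\infty$. Adding the three groups of interactions proves $\|b^iD_iu\|_{\dX'}\lesssim\kappa\|u\|_{\dX}$, which completes the argument.

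The main obstacle is precisely the low regularity of $b$ — we have only $\la x\ra b\in\ell^1_jL^\infty$ and, in part (a), $\la x\ra^2\div b\in L^\infty$, with no control on $\nabla b$. This is what forces the integration by parts in the high–input interactions, so that a derivative removed from $b$ never produces more than $\div b$, the single combination we control, and it is why the frequency trichotomy must be run by hand, with the separate scale-$2^{-k}$ bookkeeping for output frequencies $\lesssim 1$. These are the same technical points confronted — in a stronger form, since there the metric \emph{is} differentiated — in the proof of Proposition~\ref{lemma.A.to.Ak}, so the argument proceeds in close parallel with it; the one genuinely new ingredient is the integration-by-parts step that trades the missing $\nabla b$ for the available $\div b$.
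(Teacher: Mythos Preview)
Your trichotomy with integration by parts in the high--input block is exactly the paper's Case~1 argument, and it correctly handles part~(b) in all dimensions and part~(a) for $n\ge3$.

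The gap is part~(a) for $n=1,2$. Your reduction asserts that it suffices to prove $\|b^iD_iu\|_{\dX'}\lesssim\kappa\|u\|_{\dX}$, but this is false: $\int b^iD_iu=-i\int(\div b)u$ is generically nonzero, while in low dimension $\dX'$ contains only mean-zero functions. Retreating to $\tX\to\tX'$ does not rescue the argument as written. After integration by parts you are left with the high--high paraproduct $\sum_m\tilde S_m(\div b)\,S_mu$, which is \emph{not} the full product $(\div b)u$, so the direct weighted-$L^2$ bound you used for the zero-order piece does not apply (the pieces at different $m$ all land at low output frequency with no orthogonality, forcing an $\ell^1_m$ sum you cannot close). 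Your alternative ``block by block $X_k'$ estimate'' is precisely the off-diagonal bound \eqref{odhhll}, and the paper explicitly checks that this fails when $n=1,2$: the small-$x$ contribution, computed via Bernstein, loses a factor $2^{m-k}$ that destroys the Schur summability.

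The paper treats part~(a) in low dimension by a genuinely different route (its Case~2): it writes the estimate as a symmetric bilinear form, splits both $u$ and the test function $v$ into $u^{in}+u^{out}$ via the construction of Section~\ref{embxs}, and uses the improved local bound \eqref{impl2} on the ``out'' pieces to recover the $n\ge3$ off-diagonal decay \eqref{odhhll.new}. The ``in--in'' interaction is then handled using $\|u^{in}\|_{\dot H^1}\lesssim\|u\|_{\dX}$; in $n=2$ this still leaves a logarithmic loss, which is closed by exploiting the radial symmetry of $u^{in},v^{in}$ together with an improved decay estimate \eqref{bbr} on the spherical average $\bar b^r$, derived from integrating the divergence equation. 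This last idea is the missing ingredient in your proposal.
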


\begin{proof}
This proof parallels a similar argument in \cite{T}. However in there 
only dimensions $n \geq 3$ are considered, and the bound
\eqref{coeffc} is stronger to include the full gradient of $b$.
Thus we provide a complete proof here. We consider two cases,
the first of which is similar to  \cite{T}, while  the second
requires a new argument.

{\bf Case 1: The estimate \eqref{bc} for $n \geq 3$ and \eqref{bnoc}
  for $n=1,2$.}
The estimate for the $c$ term is straightforward since, by \eqref{coeffc}, 
\[ \la cu, v\ra \lesssim \kappa \|\la x\ra^{-1} u\|_{L^2_{t,x}} \|\la x\ra^{-1} v\|_{L^2_{t,x}}
\lesssim \kappa \|u\|_{\tX} \|v\|_{\tX}.\]
  
For the $b$ term, we consider a
paradifferential decomposition,
\begin{multline}\label{trichotomy}
(b^i D_i + D_i b^i)u=\ \sum_{k}  (S_{<k}  b^i D_i+ D_i S_{<k}  b^i
)S_k u 
\\  \ + \sum_k (S_k b^i D_i + D_i S_{k}  b^i)S_k u 
 \ + \sum_{k} (S_{>k}b^i D_i + D_i S_{>k}  b^i)S_k u.
\end{multline}

The frequency localization is preserved in the first term; therefore 
it suffices to verify that
\[
\| (S_{<k}  b^i D_i+ D_i S_{<k}  b^i)S_k u \|_{X'_{k}} \lesssim \kappa 2^{k}\|S_k u\|_{X_k}.
\]
The derivative yields a factor of $2^k$, and we are left with proving
that
\[
\| S_{<k}  b^i v\|_{X'_{k}} \lesssim \kappa \| v\|_{X_k}.
\]
This in turn follows from the pointwise bound
\[
|S_{<k}  b^i | \lesssim 
\begin{cases}
\kappa_k(|x|)\la x\ra^{-1}  ,\quad k\ge 0,\\
\max\Bigl\{ 2^k \kappa_k(|x|)\la 2^k x\ra^{-1}, \kappa 2^k\la 2^k x\ra^{-2}\Bigr\},\quad k <0\end{cases}
\]
which is easy to obtain. The second term on the second line above is
only needed in the worst case $n=1$.

The remaining two terms in \eqref{trichotomy} are dual. Hence it
suffices to consider the last one. We want the derivative to go to the
low frequency; therefore we rewrite it in the form
\begin{equation}
\sum_{k}  2 S_{>k}b^i  D_i  S_k u   -i S_{>k} \div\ b \ S_k u.
\label{hl}
\end{equation}
We consider the two terms separately. The second one occurs only in 
the case of \eqref{bc} but the first one occurs also in \eqref{bnoc}.
So we need to show that
\[
\| \sum_{k}  S_{>k}b^i  D_i  S_k u  \|_{\dX'} \lesssim \kappa \| u\|_{\dX}.
\]
This will follow from the dyadic estimates
\[
\| S_{m}b^i  S_k u  \|_{X'_m} \lesssim \kappa   \| S_k
u\|_{X_k}, \qquad m > k.
\]
Given the pointwise bound on $S_m b^i$, this reduces to
\[
\| S_k u  \|_{X_m} \lesssim     \| S_k
u\|_{X_k}.
\]
For $|x| > \max\{2^{-k},1\}$ this is trivial. For smaller $x$
we use \eqref{bernstein},
and the conclusion is obtained by a direct computation.

It remains to consider the second term in \eqref{hl}, for which we 
want to show that in dimension $n \geq 3$
\begin{equation}
\| \sum_{k}  S_{>k} \div\ b \ S_k u \|_{\tX'} \lesssim \kappa \| u\|_{\tX}.
\label{hhll}\end{equation}

For this we establish again off-diagonal decay,
\begin{equation}
\| S_{m} \div\ b \ S_k u \|_{X'_m} \lesssim \kappa (m-k) 2^k  \| S_k u\|_{X_k},
\qquad m > k.
\label{odhhll}\end{equation}
This follows from the pointwise bounds
\[
| S_{m} \div\ b | \leq \kappa 2^{2m}\la 2^m x\ra^{-2}, \qquad m < 0
\]
\[
| S_{m} \div\ b | \leq \kappa \la  x\ra^{-2}, \qquad m \geq 0.
\]
We consider the worst case $0 > m > k$ and leave the rest for the
reader. We use $\chi_{<-k}$ to separate small and large values of $x$.
For large $x$ we have
\[
\| \chi_{>-k} S_{m} \div\ b \ S_k u \|_{X'_m} \lesssim 
\kappa \| |x|^{-2} \chi_{>-k}  \ S_k u \|_{X'_k} \lesssim 
\kappa 2^k  \| S_k u\|_{X_k}.
\]
For small $x$ we use \eqref{bernstein} instead,
\[
\| \chi_{<-k} S_{m} \div\ b \ S_k u \|_{X'_m} \lesssim 
 \kappa 2^{2m} 2^{\frac{n-1}2 k}  \| \chi_{<-k} \la 2^m x\ra^{-2}\|_{(X_m^0)'} \|S_k u\|_{X_k}
\lesssim   \kappa  2^{ k}  \|S_k u\|_{X_k}.
\]
The last computation above is accurate if $n \geq 4$. In dimension
$n=3$ we encounter a harmless additional logarithmic factor $|m-k|$.
However if $n=1,2$ then the above off-diagonal decay can no longer 
be obtained.

{\bf Case 2: The estimate \eqref{bc} in dimension $n=1,2$.}
The $c$ term is again easy to deal with. We write the 
estimate for $b$ in a symmetric way,
\[
|\la (b^i D_i + D_i b^i) u,v \ra | \lesssim \kappa \|u\|_{\tX} \|v\|_{\tX}.
\]
We use the decomposition in Section~\ref{embxs},
\[
u = u^{in} + u^{out}, \qquad v =  v^{in} + v^{out}.
\]
We consider first the expression 
\[
\la (b^i D_i + D_i b^i) u^{out},v^{out} \ra.
\]
For this we can take advantage of the improved $L^2$ bound
\eqref{impl2} to carry out the same computation as in dimension 
$n \geq 3$, establishing off-diagonal decay. Precisely,
the difference arises in the proof of \eqref{odhhll}, whose
replacement is 
\begin{equation}
\| S_{m} \div\ b \ (1-T_k) S_k u \|_{X'_m} \lesssim \kappa (m-k)2^k  \| S_k u\|_{X_k},
\qquad m > k.
\label{odhhll.new}\end{equation}

Consider now one of the cross terms, 
\[
\la (b^i D_i + D_i b^i) u^{in},v^{out} \ra = \la (2 b^i D_i  -i \div  b) u^{in},v^{out} \ra.
\]
The proof for the other cross term will follow similarly.
For the $\div \ b$ term we use the $L^2$ bound for both $u^{in}$ and
$v^{out}$, as in the case of $c$. For the rest we use \eqref{l2uin}
and \eqref{l2uout} to estimate
\[
|\la  b^i D_i  u^{in},v^{out} \ra| \lesssim \|u^{in}\|_{\dot H^1} \| b
v^{out}\|_{L^2} \lesssim \|u\|_{\dX} \|v\|_{\dX}.
\]

 Finally, consider the last term
 \[
\la (b^i D_i + D_i b^i) u^{in},v^{in} \ra.
\]
In dimension $n=1$, we can easily estimate it by
\[
|\la (b^i D_i + D_i b^i) u^{in},v^{in} \ra| \lesssim \|u^{in}\|_{\dot
  H^1} \| \la x \ra^{-1} v^{in}\|_{L^2} + \|v^{in}\|_{\dot
  H^1} \| \la x\ra^{-1} u^{in}\|_{L^2} \lesssim \|u\|_{\tX} \|v\|_{\tX}.
\]
This argument fails for $n=2$ due to the logarithmic factor in the
$L^2$ weights. Instead we will take advantage of the
spherical symmetry of both $u^{in}$ and $v^{in}$.

In polar coordinates we write
\[
b^i D_i = b^r D_r + r^{-1} b^\theta D_\theta 
\] 
and
\[
\div \ b = \partial_r b^r + r^{-1} b^r + r^{-1} \partial_\theta b^\theta.
\]
For a function $b(r,\theta)$, we denote $\bar b(r)$ its spherical
average. By spherical symmetry, we compute
\[
\la b^i D_i  u^{in},v^{in} \ra = \la  (b^r D_r + r^{-1} b^\theta D_\theta ) u^{in},v^{in} \ra
=  \la D_r  u^{in},  \bar{b^r} v^{in} \ra.
\]
Then we can estimate
\[
|\la (b^i D_i + D_i b^i) u^{in},v^{in} \ra| \lesssim \|u^{in}\|_{\dot
  H^1} \|  \bar{b^r} v^{in}\|_{L^2} + \|v^{in}\|_{\dot H^1} \|\bar{b^r}u^{in}\|_{L^2}
\lesssim \|u\|_{\tX} \|v\|_{\tX}
\]
provided we are able to establish the improved bound
\begin{equation}
| \bar{b^r}(r)| \lesssim \la r \ra^{-1}(\ln (2+r))^{-1}.
\label{bbr}\end{equation}

For this we take spherical averages in the divergence equation
to obtain
\[
 \partial_r \bar b^r + r^{-1} \bar b^r = \overline{\div \ b}.
\]
At infinity we have $b(r) = o(r^{-1})$. Integrating from infinity
we obtain
\[
 \bar b^r(r) = \int_{r}^\infty \frac{s}{r}\ \overline{\div \ b}(s) ds.
\]
Hence
\[
| \bar b^r(r) | \lesssim \int_{r}^\infty \frac{s}{r} (1+s)^{-2}
(\ln(2+s))^{-2} ds
\]
and \eqref{bbr} follows.
\end{proof}

\bigskip
\newsection{Local smoothing estimates}\label{smoothing.section}

In this section we prove our main local  smoothing estimates, 
first in the exterior region and then in the non-trapping case.

\subsection{The high dimensional case $n \geq 3$: Proof of Theorem~\ref{main.ls.theorem}}
The proof uses  energy estimates and the positive commutator
method. This turns out to be rather delicate. The difficulty 
is that the trapping region acts essentially as a black box,
where the energy is conserved but little else is known. 
Hence all the local smoothing information has to be estimated 
starting from infinity along rays of the Hamilton flow which are 
 incoming either  forward or backward in time.

We begin with the energy estimate. This is standard
if the right hand side is in $L^1_t L^2_x$, but we would like to allow 
the right hand side to be in the dual smoothing space as well.

\begin{proposition}
Let $u$ solve the equation
\begin{equation}
D_t +A u = f_1 + f_2, \qquad u(0) = u_0
\end{equation}
in the time interval $[0,T]$. Then we have 
\begin{equation}
\| u\|_{L^\infty_t L^2_x}^2 \lesssim \| u_0\|_{L^2}^2 + \| f_1\|_{L^1_t L^2_x}^2
+ \| u\|_{\tX_e} \|f_2\|_{\tX'_e}. 
\label{eest}\end{equation}
\end{proposition}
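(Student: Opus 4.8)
The plan is to run the standard energy identity for the Schr\"odinger equation, being careful to pair the dual-smoothing-space source term against the solution in the $\tX_e$–$\tX'_e$ duality rather than trying to put it in $L^1_tL^2_x$. First I would reduce to the case where $u$ is smooth and decaying, so that all the manipulations below are justified; the general case follows by the usual density argument (this is where the definition of $\dX$ as the completion of the Schwartz class, and of $\tX_e$ built from it, is used). Then I would compute $\tfrac{d}{dt}\|u(t)\|_{L^2}^2$. Writing $D_t u = -Au + f_1 + f_2$, i.e. $\partial_t u = i A u - i(f_1+f_2)$, and using that $A = D_i a^{ij} D_j + b^iD_i + D_i b^i + c$ is self-adjoint on $L^2$ (here the hypothesis that $b^i$, $c$ are real in the relevant region enters, via the standing assumptions of Theorem~\ref{main.ls.theorem}), the term $\langle iAu, u\rangle + \langle u, iAu\rangle$ vanishes, and one is left with
\[
\frac{d}{dt}\|u(t)\|_{L^2}^2 = 2\,\mathrm{Re}\,\langle -i(f_1+f_2), u\rangle = 2\,\mathrm{Im}\,\langle f_1+f_2, u\rangle.
\]

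Next I would integrate this identity from $0$ to an arbitrary $t\in[0,T]$ and take the supremum over $t$. The $f_1$ contribution is controlled in the classical way: $\big|\int_0^t \langle f_1,u\rangle\,ds\big| \le \|f_1\|_{L^1_sL^2_x}\|u\|_{L^\infty_sL^2_x}$, and then one absorbs the $\|u\|_{L^\infty_tL^2_x}$ factor using $2ab \le \tfrac12 a^2 + 2b^2$, which after moving $\tfrac12\|u\|_{L^\infty_tL^2_x}^2$ to the left produces the $\|f_1\|_{L^1_tL^2_x}^2$ term. The $f_2$ contribution is handled by the $\tX_e$, $\tX'_e$ duality pairing on the space-time slab $[0,T]\times\R^n$: $\big|\int_0^T \langle f_2(s),u(s)\rangle\,ds\big| \lesssim \|u\|_{\tX_e}\|f_2\|_{\tX'_e}$, which is exactly the second term on the right of \eqref{eest}. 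Collecting the bound at $t=0$, i.e. $\|u_0\|_{L^2}^2$, with these two estimates gives \eqref{eest}.

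The main obstacle, and the only genuinely non-routine point, is justifying the duality pairing $|\int \langle f_2,u\rangle| \lesssim \|u\|_{\tX_e}\|f_2\|_{\tX'_e}$ cleanly: one must check that the time-truncation to $[0,T]$ does not destroy membership in $\tX_e$ (it does not, since the $\tX_e$ norm is built from $L^2_{t,x}$-type quantities and the weight $\rho$ is time-independent), and that the quotient/constant issues that plague $\dX$ in low dimensions do not interfere here — but in the $n\ge 3$, $\tX_e$ setting relevant to this proposition the Hardy inequality \eqref{Hardy} makes $\tX = \dX$ a genuine function space, so the pairing is unambiguous. A secondary subtlety is the self-adjointness step: the integrations by parts that kill $\mathrm{Re}\,\langle iAu,u\rangle$ require enough decay on $u$, which is why one works first with Schwartz (or at least suitably decaying) data and passes to the limit at the end.
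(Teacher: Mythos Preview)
Your proof is correct and follows essentially the same approach as the paper: differentiate $\|u(t)\|_{L^2}^2$, use self-adjointness of $A$ to kill the $A$ term, integrate, bound the $f_1$ piece by $\|f_1\|_{L^1_tL^2_x}\|u\|_{L^\infty_tL^2_x}$ and absorb via Young's inequality, and bound the $f_2$ piece by the $\tX_e$--$\tX'_e$ duality. One small remark: your aside that the pairing is only unambiguous ``in the $n\ge 3$ setting'' is unnecessary, since $\tX$ (and hence $\tX_e$) is by construction a genuine function space in all dimensions---it is $\dX$, not $\tX$, that is defined modulo constants when $n=1,2$.
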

\begin{proof}
The proof is straightforward.  We compute
\[
\frac{d}{dt} \frac12 \|u(t)\|_{L^2}^2 = \Im \langle u, f_1+f_2 \rangle.
\]
Hence for each $t \in [0,T]$ we have
\[
\| u(t)\|_{L^2}^2 \lesssim \|u(0)\|_{L^2}^2 + \|u\|_{L^\infty_t L^2_x}
\| f_1\|_{L^1_t L^2_x} +  \| u\|_{\tX_e} \|f_2\|_{\tX'_e}.
\]
We take the supremum over $t$ on the left and use bootstrapping
for the second term on the right.  The conclusion follows.
\end{proof}

To prove \eqref{lsext} we need a complementary estimate,
namely  
\begin{equation}
  \| \rho u\|_{\tX}^2 \lesssim \| u\|_{L^\infty_t L^2_x}^2 + 
  \| f_1\|_{L^1_t L^2_x}^2 
  +  \|\rho f_2\|_{\tX'}^2 + \| \la x \ra^{-2}  u\|_{L^2_{t,x}}^2. 
\label{lsest}\end{equation}
Given \eqref{eest} and \eqref{lsest}, the bound  \eqref{lsext}
is obtained by bootstrapping, with some careful balancing 
of constants. 

It remains to prove \eqref{lsest}. 
We will use a positive commutator method.  We shall assume that $b=0$ and $c=0$.
For a self-adjoint operator
$Q$, we have
\[
2\Im\la A u, Qu\ra = \la Cu,u\ra
\]
where
\[
C=i[A,Q].
\]
As a consequence of this, we see that
\[
\frac{d}{dt}\la u, Qu\ra = -2\Im\la (D_t+A)u, Qu\ra + \la
Cu,u\ra.
\]
Taking this into account, the estimate \eqref{lsest} is an immediate
consequence of the following lemma.

\begin{proposition} \label{Qprop}
 There is a family $\cal Q$ of bounded self-adjoint operators $Q_\rho$ with 
the following properties:

(i) $L^2$ boundedness,
\[
\|Q_\rho\|_{L^2 \to L^2} \lesssim  1
\]

(ii) $\tX$ boundedness,
\[
|\langle Q_\rho u, f \rangle | \lesssim \| \rho f\|_{\tX'} \| \rho u\|_{\tX}
\]

(iii) Positive commutator,
\[
\sup_{Q_\rho \in \cal Q} \langle Cu,u \rangle \geq  c_1 \|\rho u\|_{\tX}^2 -
c_2 \|\la x\ra^{-2}  u\|_{L^2_{t,x}}^2.
\]
\end{proposition}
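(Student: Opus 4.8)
The plan is to build $Q_\rho$ as a pseudodifferential operator whose symbol $q(x,\xi)$ is a positive commutant for the (mollified) principal symbol $a(x,\xi) = a^{ij}_{(k)}(x)\xi_i\xi_j$ outside the ball $\{|x| \le 2^M\}$, following the classical Morawetz/Doi-type construction adapted to the dyadic weights defining the $\tX$ norm. Since the frequency scales must be handled separately, I would first conjugate by the Littlewood--Paley pieces $S_k$ and work at a fixed frequency $2^k$, reducing everything to a symbol calculus estimate for $A_{(k)}$; the errors in replacing $A$ by $\tilde A = \sum_k A_{(k)} S_k$ are controlled by Proposition~\ref{lemma.A.to.Ak} (and the lower order terms $b,c$, which we have set to zero here, would be absorbed via Proposition~\ref{lemma.lower.order} in the general case). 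For $k > 0$ one takes a symbol of the form $q_k(x,\xi) = \chi(\rho)\, \beta(|x|)\, \frac{x\cdot\xi}{|\xi|}$ where $\beta$ is built from the slowly varying sequence $\kappa_j$ (essentially $\beta(|x|)$ increasing, bounded, with $\beta' \sim \la x\ra^{-1}$ on each dyadic shell down to the scale $2^M$), so that the radial derivative term in the Poisson bracket $\{a, q\}$ produces $\beta'(|x|)|\xi| \sim \la x\ra^{-1} 2^k$, which is exactly the density in $2^k\|\rho S_k u\|_{X_k}^2$. The analogous construction for $k \le 0$ uses the rescaled weights $(|x| + 2^{-k})^{-1}$ built into $X_k$ for low frequencies. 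Summing the dyadic bounds over $k$ with the $\ell^1$ slowly-varying sequence gives property (iii), while the uniform $L^2$ bound (i) is immediate from the Calderón--Vaillancourt theorem since $q$ is a bounded symbol of order zero, and the $\tX$-duality bound (ii) follows because $Q_\rho$ is, up to the harmless frequency-localization errors, a sum of pieces each supported (in $x$) where $\rho \equiv 1$ and bounded on $X_k \to X_k$.

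The main structural point, and the reason $\cal Q$ is a \emph{family} rather than a single operator, is that the commutator $\{a,q\}$ also contains a ``glancing'' contribution from the angular part of $\xi$ and from the region near $|x| = 2^M$ where the rays may enter the black box, and these are not sign-definite. The standard device is to let $q$ depend on a large parameter (the overall size of $\beta$, or equivalently how steeply $\beta$ rises as $|x| \to \infty$), take the supremum over the family, and use that the bad terms are lower order: they are either supported in $\{|x| \le 2^{M+1}\}$, hence absorbed into the error $c_2\|\la x\ra^{-2} u\|_{L^2_{t,x}}^2$, or they carry an extra power of $\la x\ra^{-1}$ and a factor of $\kappa$ (from \eqref{coeff.M}), hence can be absorbed into the main term $c_1\|\rho u\|_{\tX}^2$ after choosing $M$ large. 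Concretely, in the commutator identity $\frac{d}{dt}\la u,Q_\rho u\ra = -2\Im\la(D_t+A)u,Q_\rho u\ra + \la Cu,u\ra$ one integrates in $t$ over $[0,T]$, bounds the boundary terms by (i), bounds the $(D_t+A)u = f_1+f_2$ terms by (i) and (ii), and reads off \eqref{lsest} from (iii) after a bootstrap; this is precisely the reduction carried out just above the Proposition.

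The hard part is getting the positivity in (iii) \emph{uniformly down to the fixed inner radius $2^M$} while only assuming $C^2$ coefficients and the weak flatness \eqref{coeff}. Because $a^{ij} - I_n$ is merely small in the dyadic-summed sense \eqref{coeff.M}, the curvature terms in the second-order part of $\{a,q\}$ (which involve $\partial_x^2 a$) must be controlled by the $\kappa_j$ weights and shown to be dominated, on each shell $A_j$ with $j \ge M$, by the positive main term $\beta'|\xi|^2$; the slowly varying property $|\ln\kappa_j - \ln\kappa_{j-1}| \le 2^{-10}$ is exactly what lets this comparison be done shell-by-shell without the errors accumulating. I would also need to check the low-frequency case $k \le 0$ carefully, since there the relevant Hardy-type inequality \eqref{Hardy} (valid for $n \ge 3$) is what converts the $\la x\ra^{-2}$ error term and the $\beta$-weighted output back into genuine $\tX$-norm control; the endpoint bookkeeping between $c_1$ and $c_2$, and the final balancing of constants in the bootstrap against \eqref{eest}, is routine once the dyadic positivity estimate is in hand.
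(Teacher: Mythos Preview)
Your overall architecture is right---assemble $Q_\rho = \sum_k \rho S_k Q_k S_k \rho$ from frequency-localized Morawetz multipliers and reduce to dyadic commutator bounds for $A_{(k)}$---but two points are off, and the second is a genuine gap.

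First, your explanation of why $\mathcal Q$ is a family is not the correct one. The $X_k$ norm is a \emph{supremum} over dyadic shells $D_j$, not a weighted $L^2$ norm; a single multiplier with $\beta'(|x|)\sim \la x\ra^{-1}$ only yields $\|u\|_{X_{k,\alpha}}^2 = \sum_j \alpha_j \,\||x|^{-1/2} u\|_{L^2(D_j)}^2$ for one fixed slowly varying sequence $(\alpha_j)$ with $\sum\alpha_j = 1$. The family $\mathcal Q$ is indexed by the collection of all such sequences (one per $k$), each dictating how $\phi'$ is distributed across shells; taking the supremum over $\mathcal Q$ then recovers $\sup_j \||x|^{-1/2}u\|_{L^2(D_j)}^2$. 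It is not a large-parameter absorption device for glancing terms; those are handled directly by the convexity of $\phi$ and the smallness $\kappa_j \le \epsilon\alpha_j$ for $j\ge M$.

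Second, and this is where your argument would fail: the cutoff commutator cannot be ``absorbed into the error $c_2\|\la x\ra^{-2}u\|_{L^2_{t,x}}^2$''. Expanding $[A_{(k)},\rho_{<k} S_k Q_k S_k \rho_{<k}]$ for $k>0$ produces the cross term $2\Im\la [A_{(k)},\rho_{<k}]u,\, Q_k\rho_{<k}u\ra$, and since $[A_{(k)},\rho_{<k}]$ is first order while $Q_k$ is $L^2$-bounded, the naive size is $2^k\|S_k u\|_{L^2(|x|\sim 2^M)}^2$; summing over $k>0$ gives a local $H^{1/2}$ norm, not an $L^2$ one. The fix in the paper is to build the metric into the high-frequency multiplier, $Q_k = 2^{-k}\delta\bigl(D_i a^{ij}_{(k)} x_j \phi(\delta|x|)+\phi(\delta|x|)a^{ij}_{(k)}x_i D_j\bigr)$, so that this cross term becomes exactly
\[
-\delta\int |x|^{-1}\rho'_{<k}\,\phi(\delta|x|)\,\rho_{<k}\,|Lu|^2\,dx\,dt \;+\; \int V\,|u|^2\,dx\,dt
\]
with $L = x_i a^{ij}_{(k)}D_j + D_i a^{ij}_{(k)}x_j$. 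The first integral has a sign (as $\rho'\ge 0$) and is discarded; only the bounded, rapidly decaying potential $V$ survives, giving the required $2^{-k}\|\la x\ra^{-2}u\|_{L^2}^2$ bound. Your Euclidean symbol $\chi(\rho)\beta(|x|)\,x\cdot\xi/|\xi|$ does not produce this cancellation, and without it the high-frequency sum diverges.
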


  We first note that the condition (ii) shows that $Q_\rho u$ is supported
  in $\{|x| > 2^M\}$ and depends only on the values of $u$ in the same
  region. Hence for the purpose of this proof we can modify the
  operator $A$ arbitrarily in the inner region $\{|x| < 2^M\}$. In
  particular we can improve the constant $\kappa$ in \eqref{coeff} to
  the extent that \eqref{coeff.M} holds globally. Similarly, we can
  assume without any restriction in generality that $u = 0$ in $\{|x|
  < 2^M\}$.  

Using (ii), we may argue similarly and assume that \eqref{coeffb.M} and
\eqref{coeffc.M} hold globally if lower order terms are present. 
The estimate \eqref{bc} then justifies
 neglecting the 
lower order terms in $A$.  I.e., we may assume that $b=0$, $c=0$.

\begin{proof}
The main step in the proof of the proposition is to construct
some frequency localized versions of the operator $Q_\rho $. Precisely,
for each $k \in \Z$ we produce a family ${\cal Q}_k$ 
of operators $Q_k$, which we later use to construct $Q_\rho $.
We consider two cases, depending on whether $k$ is positive or
negative.

We first introduce some variants of the spaces $X_k$.  Let $k \in \Z$ and
$k^-=\frac{|k|-k}{2}$ be its negative part. For any positive, slowly varying
sequence $(\alpha_m)|_{m\geq k^-}$ with
\[
\sum_{k \ge k^-}\alpha_j =1, \qquad \alpha_{k^-} \approx 1,
\]
we define the space $X_{k,\alpha}$ with norm
\begin{align*}
  \|u\|_{X_{k,\alpha}}^2 &= 2^{-k^-} \|u\|^2_{L^2(A_{\leq k^-})} +
  \sum_{j>k^-} \alpha_j\| |x|^{-1/2} u\|^2_{L^2(A_j)}.
\end{align*}
Then our low frequency result has the form
\begin{lemma} \label{lemma.low.freq} 
Let $n \geq 1$ and $k<0$. Then for any slowly varying sequence $(\alpha_m)$ with
$\alpha_{-k} \approx 1$ and $\sum_{m\ge -k}\alpha_m=1$, there
  is a self-adjoint operator $Q_k$ so that
\begin{align}
\|Q_k u\|_{L^2}&\lesssim  \|u\|_{L^2},\label{Q.L2.low}\\
\|Q_k u\|_{X_{k,\alpha}}&\lesssim  \|u\|_{X_{k,\alpha}},\label{Q.X.low}\\
\la C_k u, u\ra &\gtrsim 2^{k} \|u\|^2_{X_{k,\alpha}}, \qquad 
C_k = i [ A_{(k)}, Q_k] 
\label{C.low}
\end{align}
for all functions $u$ frequency localized at frequency $2^k$.
\end{lemma}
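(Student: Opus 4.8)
\emph{Plan.} The plan is to strip off the frequency by rescaling, and then to run a radial positive commutator argument of Morawetz type for a small perturbation of the flat Laplacian.

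\emph{Reduction by scaling.} Since $k<0$, I would set $y=2^k x$ and $v(y)=u(2^{-k}y)$. Then $v$ is frequency localized near $|\eta|\approx 1$, the operator becomes $A_{(k)}=2^{2k}\tilde A_0$ with $\tilde A_0=D_i\tilde a^{ij}(y)D_j$ and $\tilde a^{ij}(y)=a^{ij}_{(k)}(2^{-k}y)$, and by \eqref{coeffak.greater} --- together with the reduction to globally $\varepsilon$-small coefficients already made in the proof of Proposition~\ref{Qprop} --- the $\tilde a^{ij}$ form a slowly varying, $\ell^1$-summable, $\varepsilon$-small perturbation of $I_n$ on the dyadic scale. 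A direct bookkeeping of the weights gives $2^k\|u\|_{X_{k,\alpha}}^2 = 2^{(2-n)k}\|v\|_{X_{0,\tilde\alpha}}^2$ and, if $\tilde Q$ denotes $Q_k$ transported to the new variable, $\la C_k u,u\ra = 2^{(2-n)k}\la \tilde C_0 v,v\ra$ with $\tilde C_0=i[\tilde A_0,\tilde Q]$, where $\tilde\alpha_j=\alpha_{j-k}$ ($j\ge 0$) again has $\tilde\alpha_0\approx 1$ and $\sum_{j\ge 0}\tilde\alpha_j=1$. Thus it suffices, uniformly in $k$, to produce a bounded self-adjoint $\tilde Q$ on functions at frequency $\approx 1$ with $\|\tilde Q v\|_{X_{0,\tilde\alpha}}\lesssim\|v\|_{X_{0,\tilde\alpha}}$ and $\la i[\tilde A_0,\tilde Q]v,v\ra\gtrsim\|v\|_{X_{0,\tilde\alpha}}^2$, and then to transfer back.

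\emph{The multiplier.} I would pick a smooth, bounded, nondecreasing $g$ on $[0,\infty)$ with $g(0)=0$, $g'(r)\approx 1$ for $r\lesssim 1$ and $g'(r)\approx\tilde\alpha_j r^{-1}$ for $r\approx 2^j$, $j\ge 1$; this is possible with $g$ uniformly bounded and slowly varying on the dyadic scale since $\int_0^1 dr+\sum_{j\ge 1}\tilde\alpha_j\approx 1$. Then $\tilde Q=q(y,D)$ with symbol $q(y,\eta)=g(|y|)\,\frac{y\cdot\eta}{|y||\eta|}\,\chi(|\eta|)$, where $\chi$ is a bump equal to $1$ on the frequency support of $v$; the choice $g(0)=0$ makes $q$ smooth at the origin. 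Boundedness on $L^2$ is immediate since the symbol is bounded, and the $X_{0,\tilde\alpha}$ bound follows from the slow variation of $g$ together with the unit-scale off-diagonal decay of the kernel of $q(y,D)$, in the same fashion as the analogous mapping properties in \cite{T}. For the commutator, the standard Morawetz identity for $-\Delta$ gives, on $v$ frequency localized near $1$,
\[
\la i[-\Delta,\tilde Q]v,v\ra = \int g'(|y|)|P_{rad}v|^2 + \frac{g(|y|)}{|y|}|P_{ang}v|^2\,dy + \la Lv,v\ra,
\]
where $P_{rad},P_{ang}$ are the frequency-truncated radial and spherical parts of the gradient, with $P_{rad}^*P_{rad}+P_{ang}^*P_{ang}\approx I$ at frequency $\approx 1$, and $L$ is a zeroth order operator built from the lower derivatives of $g$. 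Since $g(|y|)/|y|\gtrsim g'(|y|)$, the first two terms dominate $\la g'(|y|)(P_{rad}^*P_{rad}+P_{ang}^*P_{ang})v,v\ra\approx\la g'(|y|)v,v\ra\approx\|v\|_{X_{0,\tilde\alpha}}^2$, using the frequency localization to trade the derivative terms for $|v|^2$. The perturbation $\la i[\tilde A_0+\Delta,\tilde Q]v,v\ra$ is, by \eqref{coeffak.greater}, bounded by $C\varepsilon$ times the main term and hence harmless.

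\emph{Main obstacle.} I expect the delicate point to be the zeroth order remainder $\la Lv,v\ra$: because $v$ occupies a single frequency it is comparable to $\int(\text{symbol})\,|v|^2$ and is \emph{not} small relative to the inner piece $\|v\|_{L^2(\{|y|\le 1\})}^2$ of the target norm, and its sign is not a priori favorable near $|y|\approx 1$. Closing the estimate therefore requires choosing $g$ --- equivalently the Morawetz potential $\varphi$ with $\varphi'=g$ --- so that the dangerous contribution $-\tfrac14\int\Delta^2\varphi\,|v|^2$ has the right sign or is absorbed by the main term; this is where the dimension enters, through the superharmonicity of $\Delta\varphi$ as a radial function (immediate for $n\ge 3$, and arranged by an explicit low-dimensional choice for $n=1,2$, where the relevant quantities in the inner region are in any case bounded). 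The perturbation errors and the $X_{0,\tilde\alpha}$ mapping properties, by contrast, are routine once the multiplier is fixed.
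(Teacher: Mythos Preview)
Your overall architecture---rescale to unit frequency, then run a radial Morawetz commutator for a small perturbation of $-\Delta$ with the radial weight $g'$ matched to $\tilde\alpha$---is the paper's strategy, and you have correctly located the one genuine obstruction: the zeroth-order remainder $\langle Lv,v\rangle\sim-\tfrac14\int(\Delta^2\varphi)\,|v|^2$ is of the same order as the main term near the origin and cannot be discarded on size alone. But your proposed resolution (choose $g$ so that $-\Delta^2\varphi\ge 0$, ``immediate for $n\ge 3$'') is where the argument is incomplete: the constraint $g'(r)\approx\tilde\alpha(r)\langle r\rangle^{-1}$ for an essentially \emph{arbitrary} slowly varying $\ell^1$ sequence leaves little freedom, no sign for $\Delta^2\varphi$ is actually verified, and the low-dimensional case is left vaguer still. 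There is a second, smaller gap in the perturbation step: from $|\tilde a-I|\lesssim\kappa\le\varepsilon$ you only get an error $\lesssim\varepsilon\int\langle r\rangle^{-1}|\nabla v|^2$, which is \emph{not} $O(\varepsilon)$ times the main term $\int\tilde\alpha(r)\langle r\rangle^{-1}|\nabla v|^2$ on the shells where $\tilde\alpha_j\ll 1$.

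The paper closes both gaps with a single device you are missing: a small dilation parameter $\delta$. In your rescaled variables the multiplier is the first-order \emph{differential} operator $\tilde Q=\delta\bigl(D\cdot y\,\phi(\delta|y|)+\phi(\delta|y|)\,y\cdot D\bigr)$ with $\phi(s)\approx\langle s\rangle^{-1}$ and $\phi(s)+s\phi'(s)\approx\alpha(s)\langle s\rangle^{-1}$ (so your $g$ is $s\mapsto s\phi(s)$, but stretched by $\delta$). The main positive commutator term is then of size $\delta$, while the zeroth-order remainder $\partial_i\bigl(\tilde a^{ij}\partial_j\nabla\!\cdot(\delta y\,\phi(\delta|y|))\bigr)=O(\delta^3\langle\delta y\rangle^{-3})$ carries two extra powers of $\delta$ from the two extra derivatives landing on the stretched coefficient; choosing $\delta$ to be a fixed small universal constant absorbs it in \emph{every} dimension with no sign analysis whatsoever. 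For the coefficient error one first \emph{enlarges} $\alpha$ so that $\kappa_m\le\varepsilon\alpha_m$ pointwise (permissible, since this only increases the $X_{k,\alpha}$ norm), after which the perturbation is genuinely small relative to the main term once $\varepsilon\ll\delta$.
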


\begin{proof}
  We argue exactly as in \cite[Lemma 9]{T}. The only difference is
  that here we work with the operator $A_{(k)}$ whose coefficients have
  less regularity, but this turns out to be nonessential.
 
  We first increase the sequence $(\alpha_m)$ so that
\begin{equation}
\label{new.alpha}
\begin{cases}
(\alpha_m)\text{ remains slowly varying,}\\
\alpha_m = 1\text{ for } m\le -k \\
\displaystyle \sum_{m>-k} \alpha_m \approx 1,\\
\kappa_m \le \epsilon \alpha_{m}\text{ for }  m > -k.
\end{cases}
\end{equation}
To this slowly varying sequence we may associate a slowly varying
function $\alpha(s)$ with
\[
\alpha(s)\approx \alpha_m,\quad s\approx 2^{m+k}.
\]

We construct an even smooth  symbol $\phi$  of order $-1$ satisfying
\begin{align}
\phi(s)&\approx \la s\ra^{-1},\quad s>0 \label{phi1}\\
\phi(s)+s\phi'(s)&\approx \frac{\alpha(s)}{\la s\ra},\quad
s>0.\label{phi2} 	
\end{align}
We notice that the radial function $ S_{<10}(D)\phi(|x|)$ satisfies
the same estimates; therefore without any restriction in generality we
assume that $\phi(|x|)$ is frequency localized in $|\xi| < 2^{10}$.

We now define the self-adjoint multiplier
\[
Q_k(x,D)= \delta(Dx\phi( 2^k \delta |x|)+\phi(2^k \delta |x|)xD).
\]
For small $\delta$ this takes frequency $2^k$ functions to frequency
$2^k$ functions. The first property \eqref{Q.L2.low} follows immediately.
The estimate \eqref{Q.X.low} is also straightforward as the weight in
the $X_{k,\alpha}$ norm is slowly varying on the dyadic scale.  It
remains to prove \eqref{C.low} for which we begin by computing the
commutator
\begin{equation}
\label{C}
\begin{split}
C_k =&\  4\delta D_i \phi(2^k\delta |x|)a^{ij}_{(k)}D_j 
\\ &\ + 2^{k+1}
\delta^2 \Bigl(Dx |x|^{-1}\phi'(2^k \delta |x|)
x_i a^{ij}_{(k)} D_j + D_i a^{ij}_{(k)} x_j  |x|^{-1}\phi'(2^k \delta |x|)xD\Bigr)
\\ &\ -2\delta D_i\phi(2^k \delta |x|)(x_l\partial_l a^{ij}_{(k)})D_j + \partial_i(a^{ij}_{(k)}(\partial_j \partial(
\delta x\phi(2^k \delta |x|)))).
\end{split}
\end{equation}
The positive contribution comes from the first two terms. Replacing
$a^{ij}_{(k)}$ by the identity leaves us with the principal part
\[
C_k^0=4\delta D \phi(2^k\delta |x|)D + 4\delta D \frac{x}{|x|} 2^k 
\delta |x|\phi'(2^k\delta |x|)\frac{x}{|x|}D
\]
which by \eqref{phi2} satisfies
\[
\la C_k^0 u,u\ra \ge 4\delta \la (\phi(2^k \delta |x|)+2^k \delta
|x|\phi'(2^k \delta |x|))\nabla u,\nabla u\ra
\gtrsim \delta 2^{2k}\Bigl\la \frac{\alpha(2^k \delta |x|)}{\la 2^k \delta
  x\ra}u,u\Bigr\ra.
\]
Since $a^{ij}_{(k)}(x)-\delta^{ij} = O(\kappa_k(|x|))$, the error we produce
by substituting $a^{ij}_{(k)}$ by the identity has size
\[
\delta 2^{2k} \left\langle \frac{\kappa_k(|x|)}{\la 2^k \delta x\ra}u,u\right\ra.
\]
It remains to examine the last two terms in $C_k$.  Using
\eqref{coeffak.greater}, we see that
\[
|\delta \phi(2^k \delta |x|)(x_l\partial_l a^{ij}_{(k)})|\lesssim
\frac{\delta\kappa_k(|x|)}{\la 2^k \delta x\ra}.
\]
So, the third term yields an error similar to the above one.

Finally, 
\[
|\partial_i(a^{ij}_{(k)}(\partial_j\partial(\delta x\phi(2^k \delta |x|))))|\lesssim
\frac{\delta^3 2^{2k}}{\la 2^k\delta x\ra^3}\lesssim
\frac{\delta^3 2^{2k} \alpha(2^k \delta |x|)}{\la 2^k \delta x\ra},
\]
which yields
\[
\la\partial_i(a^{ij}_{(k)}(\partial_j\partial\delta x\phi(2^k \delta |x|)))u,u\ra
\lesssim \delta^3 2^{2k} \Bigl\la \frac{\alpha(2^k \delta |x|)}{\la 2^k \delta
  x\ra} u,u\Bigr\ra.
\]

Summing up, we have proved that
\begin{equation}
\langle C_ku,u\rangle \geq c_1 \delta 2^{2k}\Bigl\la \frac{\alpha(2^k \delta |x|)}{\la 2^k \delta
  x\ra}u,u\Bigr\ra\!  - c_2 \delta^3 2^{2k}
\Bigl\la \frac{\alpha(2^k \delta |x|)}{\la 2^k \delta
  x\ra} u,u\Bigr\ra \!- c_3 \delta 2^{2k} \left\langle \frac{\kappa_k(|x|)}{\la 2^k \delta x\ra}u,u\right\ra.
\label{lowcom}\end{equation}
In order to absorb the second term into the first we need to know that
$\delta$ is sufficiently small. This determines the choice of $\delta$
as a small universal constant. In order to absorb the third term into
the first we use the last part of \eqref{new.alpha} and the fact that 
$\alpha$ is slowly varying on the dyadic scale to estimate
\[
\kappa(|x|) \lesssim \epsilon \alpha(2^k |x|) \lesssim \delta^{-1}
\epsilon \alpha(2^k \delta |x|).
\]
Thus the third term is negligible if $\epsilon \ll \delta$. This
determines the choice of $\epsilon$ in \eqref{coeff.M},
\eqref{coeffb.M} and \eqref{coeffc.M}.
\end{proof}

We continue with the result for high frequencies.

\begin{lemma}
\label{lemma.high.freq}
Let $n \geq 1$ and $k \geq 0$. Then for any  sequence $(\alpha_m)$ with
$\alpha_0=1$ and $\sum_{m\ge 0} \alpha_m=1$  there
is a self-adjoint operator $Q_k$ so that
\begin{align}
\|Q_k u\|_{L^2} &\lesssim \|u\|_{L^2},\label{Q.L2.high}\\
\|Q_k u\|_{X_{k,\alpha}} 
&\lesssim  \|u\|_{X_{k,\alpha}},\label{Q.X.high}\\
\la C_k u,u\ra &\gtrsim 2^{k}\| u\|^2_{X_{k,\alpha}}
, \qquad 
C_k = i [ A_{(k)}, Q_k],
\label{C.high}\\
2\Im\la[A_{(k)},\rho_{<k}]u,Q_k \rho_{<k} u\ra &\lesssim 2^{-k} \|\la
x\ra^{-2} u\|^2_{L^2_{t,x}} 
\label{com.high}
\end{align}
for all functions $u$ frequency localized at frequency $2^k$.  Here, $\rho_{<k}=S_{<k-4}\rho$ where
$\rho$ is as in the definition of $\tX_e$.
\end{lemma}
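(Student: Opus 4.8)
The plan is to imitate the proof of Lemma~\ref{lemma.low.freq}. Since $k\geq 0$ the mollified coefficients $a^{ij}_{(k)}$ again satisfy \eqref{coeffak.greater}, and apart from the new estimate \eqref{com.high} the argument is step for step the one in \cite[Lemma 9]{T}, only more forgiving because for $k\geq 0$ the relevant dyadic spatial scales (those $\geq 1$) do not accumulate. First I would enlarge the sequence $(\alpha_m)$ as in \eqref{new.alpha}, build an even symbol $\phi$ of order $-1$ with $\phi(s)\approx \la s\ra^{-1}$ and $\phi(s)+s\phi'(s)\approx \alpha(s)\la s\ra^{-1}$, where $\alpha(s)\approx \alpha_m$ for $s\approx 2^{m+k}$, arrange that $\phi(|x|)$ is frequency localized in $|\xi|<2^{10}$, and take the same multiplier as at low frequency,
\[
Q_k(x,D)=\delta\bigl(Dx\,\phi(2^k\delta|x|)+\phi(2^k\delta|x|)\,xD\bigr).
\]
Since $\|x\,\phi(2^k\delta|x|)\|_{L^\infty}\lesssim 2^{-k}\delta^{-1}$, this is $L^2$-bounded on frequency-$2^k$ functions, which gives \eqref{Q.L2.high}, and \eqref{Q.X.high} is immediate because the $X_{k,\alpha}$-weight is slowly varying on the dyadic scale.

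For \eqref{C.high} I would expand $C_k=i[A_{(k)},Q_k]$ exactly as in \eqref{C}. The positive contribution comes from replacing $a^{ij}_{(k)}$ by $I_n$ in the first two terms; by \eqref{phi2} and the frequency localization this is $\gtrsim \delta\,2^{2k}\Bigl\la\frac{\alpha(2^k\delta|x|)}{\la 2^k\delta x\ra}u,u\Bigr\ra$, which for $k\geq 0$ is $\gtrsim \delta\,2^{k}\|u\|^2_{X_{k,\alpha}}$. The errors — from $a^{ij}_{(k)}-I_n$, from $x_\ell\partial_\ell a^{ij}_{(k)}$, and from $\partial^2\phi$ — each carry, relative to the main term, a factor $\kappa_k(|x|)$ or a factor $\delta^2$ by \eqref{coeffak.greater}, so they are absorbed once $\delta$ is a small universal constant and then $\epsilon$ is small, precisely as at low frequency.

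The one genuinely new ingredient is \eqref{com.high}. The structural facts I would use are: $[A_{(k)},\rho_{<k}]$ is a first-order operator whose coefficients are built out of $\nabla\rho_{<k}$, $\nabla^2\rho_{<k}$ and $(\nabla\rho_{<k})(\nabla a^{ij}_{(k)})$, hence supported — up to rapidly decaying tails from $S_{<k-4}$, harmless since $2^M\gg 1$ — in the fixed annulus $\{2^M\leq|x|\leq 2^{M+1}\}$, where $\la x\ra^{-2}$ is comparable to the fixed constant $2^{-2M}$; moreover there $\phi(2^k\delta|x|)\approx 2^{-k}\delta^{-1}2^{-M}$, so the principal symbol of $Q_k$ carries a factor $2^{-k}$. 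I would then symmetrize,
\[
2\Im\la[A_{(k)},\rho_{<k}]u,Q_k\rho_{<k}u\ra=\Bigl\la \tfrac1i\bigl(\rho_{<k}Q_k[A_{(k)},\rho_{<k}]+[A_{(k)},\rho_{<k}]Q_k\rho_{<k}\bigr)u,u\Bigr\ra,
\]
a self-adjoint operator whose principal symbol, after the radial reduction $\nabla\rho_{<k}\approx\rho'(|x|)\hat x$ (recall $\rho$ is radial and nondecreasing, so $\rho'\geq 0$), is up to positive factors $-\delta\,\rho_{<k}\,\phi(2^k\delta|x|)\,\rho'(|x|)\,|x|\,(\hat x\cdot\xi)^2\leq 0$. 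The leading part being nonpositive, the aim is to show — via a sharp-G\aa rding-type bound combined with the $2^{-k}$ built into $\phi(2^k\delta|x|)$ and the compact localization of the commutator's coefficients — that the whole symmetrized operator contributes $\lesssim_M 2^{-k}\|u\|^2_{L^2}$ over the fixed region near $\{|x|\approx 2^M\}$, whence \eqref{com.high} follows since there $\|u\|^2_{L^2}\lesssim_M\|\la x\ra^{-2}u\|^2_{L^2}$. The delicate part is exactly this bookkeeping — deciding which pieces of the symmetrized operator are eliminated by their sign and which survive carrying the power $2^{-k}$ — and this, rather than the essentially routine construction of $Q_k$, is where the real work lies.
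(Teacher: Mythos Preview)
Your treatment of \eqref{Q.L2.high}, \eqref{Q.X.high}, and \eqref{C.high} is fine and essentially matches the paper. The gap is in \eqref{com.high}, and it is precisely the ``delicate part'' you flag.

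Your claim that the principal symbol of the symmetrized operator is, up to positive factors, $-(\hat x\cdot\xi)^2$ is not correct when $a^{ij}_{(k)}\neq\delta^{ij}$. The principal symbol of $\tfrac1i[A_{(k)},\rho_{<k}]$ is $-2\rho'_{<k}\,\hat x_j a^{ij}_{(k)}\xi_i$, while with your choice of $Q_k$ the principal symbol of $Q_k$ is proportional to $x\cdot\xi$. The product is therefore proportional to
\[
-\rho_{<k}\,\rho'_{<k}\,\phi(2^k\delta|x|)\,|x|\,(\hat x\cdot\xi)\,(\hat x_j a^{ij}_{(k)}\xi_i),
\]
which is a product of two \emph{different} linear forms in $\xi$ and is not sign-definite. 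Near directions with $\hat x\cdot\xi=O(\epsilon)|\xi|$ the symbol can be positive of size $\sim 2^{-k}\epsilon^2|\xi|^2$; at frequency $2^k$ this contributes $\sim\epsilon^2 2^{k}\|u\|_{L^2(|x|\approx 2^M)}^2$, which grows in $k$ and cannot be absorbed into the right side of \eqref{com.high}. No sharp-G\aa rding argument recovers the required $2^{-k}$ from a genuinely second-order symbol whose positive part has this size.

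The paper avoids this by changing the multiplier: it takes
\[
Q_k=2^{-k}\delta\bigl(D_i a^{ij}_{(k)}x_j\,\phi(\delta|x|)+\phi(\delta|x|)\,a^{ij}_{(k)}x_i D_j\bigr),
\]
i.e.\ it inserts the metric $a^{ij}_{(k)}$ into $Q_k$ (and, incidentally, uses $\phi(\delta|x|)$ with an explicit $2^{-k}$ prefactor rather than $\phi(2^k\delta|x|)$). The point of the insertion is that both $\tfrac1i[A_{(k)},\rho_{<k}]$ and $Q_k\rho_{<k}$ then have principal parts built from the \emph{same} self-adjoint first-order operator $L=x_i a^{ij}_{(k)}D_j+D_i a^{ij}_{(k)}x_j$. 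A direct computation (not a G\aa rding argument) gives
\[
2^{k}\Im\la[A_{(k)},\rho_{<k}]u,Q_k\rho_{<k}u\ra
=-\delta\int |x|^{-1}\rho'_{<k}\,\phi(\delta|x|)\,\rho_{<k}\,|Lu|^2\,dx\,dt+\int V\,|u|^2\,dx\,dt,
\]
where the first term is a perfect square with the correct sign (up to rapidly decaying tails from $\rho-\rho_{<k}$), and $V$ is a bounded, rapidly decreasing scalar potential. This yields \eqref{com.high} immediately. The paper states explicitly that ``the metric $a^{ij}$ is inserted in order to insure a crucial sign condition in the proof of \eqref{com.high}''; with your $Q_k$ that sign condition fails.
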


\begin{proof}
We replace the sequence $(\alpha_m)$ by a larger one satisfying
an analogue of \eqref{new.alpha}, namely 
\begin{equation}
\label{new.alpha.high}
\begin{cases}
(\alpha_m)\text{ is slowly varying,}\\
\alpha_0 = 1 \\
\displaystyle \sum_{m\geq 0} \alpha_m \approx 1,\\
\kappa_m \le \epsilon \alpha_{m}\text{ for }  m \geq 0,
\end{cases}
\end{equation}
and let $\alpha$ be a slowly varying function satisfying
\[
\alpha(s) \approx \alpha_m, \qquad s \approx 2^m.
\]
We  construct $\phi$ as in the low frequency case
so that  \eqref{phi1} and \eqref{phi2} are satisfied.
Then we set
\[
Q_k=2^{-k} \delta(D_ia^{ij}_{(k)}x_j\phi(\delta|x|)+
\phi(\delta|x|)a^{ij}_{(k)}x_iD_j).
\]
This choice is not very different from the one in the low frequency
case. The metric $a^{ij}$ is inserted in order to insure a crucial
sign condition in the proof \eqref{com.high}.

The first property, \eqref{Q.L2.high}, is immediate from the
properties of $\phi$ and \eqref{coeffak.greater}.  The bound
\eqref{Q.X.high} is also straightforward since the coefficients
 $a^{ij}_{(k)}$ are bounded.

 {\bf Proof of \eqref{C.high}:} In order to prove \eqref{C.high}, we
 calculate (using the symmetry of $a^{ij}$)
\begin{equation}
\label{C.high.calc} \begin{split}
C_k=&\delta 2^{-k}\Bigl[2D_l a^{lm}_{(k)}\partial_m(a^{ij}_{(k)}x_j \phi(\delta|x|))D_i
+2D_i\partial_l(a^{ij}_{(k)}x_j\phi(\delta |x|))a^{lm}_{(k)}D_m
\\&-2D_l\partial_i(a^{lm}_{(k)})a^{ij}_{(k)}x_j\phi(\delta |x|)D_m - \partial_l(a^{lm}_{(k)}
\partial_i\partial_m(a^{ij}_{(k)} x_j \phi(\delta |x|)))
\Bigr].
\end{split}\end{equation}
The main positive contribution is obtained by substituting
$a$ by $I_n$ in the first two terms,
\[
\begin{split}
C_k^0 = &\ 2^{-k}\delta \left[2D_l \partial_l (x_i \phi(\delta|x|))D_i
+2D_i\partial_l(x_i\phi(\delta |x|)) D_l\right]
\\ =  &\ 4\cdot  2^{-k}\delta \left[ D \phi(\delta |x|)D +  D
  \frac{x}{|x|} \delta |x|\phi'(\delta |x|)\frac{x}{|x|}D \right].
\end{split}
\]
As in the low frequency case, this satisfies
\[
\langle C_k^0 u,u\rangle \gtrsim \delta 2^{k} \left\langle
\frac{\alpha(\delta |x|)}{\la \delta |x|\ra}u,u \right\ra
\]
for any function $u$ localized at frequency $2^k$.  The other
contributions are shown to be smaller error terms. Consider for
instance the error made by substituting $a^{ij}_{(k)}$ by $I_n$ in the first
term. By  \eqref{coeffak.greater}, we  can estimate
\[
| a^{lm}_{(k)}\partial_m (a^{ij}_{(k)} x_j \phi(\delta |x|)) -
 \delta^{lm} \partial_m (\delta^{ij} x_j \phi(\delta |x|)) |\lesssim
\frac{\kappa_k(|x|)}{\la \delta x\ra}
\]
which contributes to $\la C_k u,u \ra$ an error of size
\[
 \delta 2^{k} \left\langle
\frac{\kappa_k (|x|)}{\la \delta x\ra}u,u \right\ra.
\]
A similar contribution comes from the second term and the third 
term. Finally, for the last term in $C$ we have
\[
|\partial_l(a^{lm}_{(k)}\partial_i\partial_m(a^{ij}_{(k)}x_j\phi_k(\delta|x|)))|
\lesssim
\frac{2^k \kappa_k(|x|)}{\la x \ra \la \delta x\ra} +
\frac{\delta^2}{\la \delta x\ra^3} \lesssim \frac{2^k \kappa_k(|x|)}
{ \la \delta x\ra} +
\frac{\delta^2\alpha(\delta |x|)}{\la \delta x\ra}
\]
which yields an error of size
\[
 \delta \left\langle
\frac{\kappa (|x|)}{\la \delta x\ra}u,u \right\ra + \delta^3 2^{-k} \left\langle
\frac{\alpha (\delta|x|)}{\la \delta x\ra}u,u \right\ra.
\]
Summing up we have proved that
\begin{equation}
\langle C_k u,u\rangle \geq c_1 \delta 2^{k}\Bigl\la 
\frac{\alpha( \delta |x|)}{\la \delta x\ra}u,u\Bigr\ra - 
c_2 \delta^3 2^{-k}\Bigl\la \frac{\alpha(\delta |x|)}{\la \delta x\ra}
u,u\Bigr\ra - 
c_3 \delta 2^{k} \left\langle \frac{\kappa(|x|)}{\la  \delta x\ra}u,u\right\ra.
\end{equation}
Choosing $\delta$ small enough (independently of $(\alpha_m)$ and $k$),
the second term on the right is negligible compared to the first.
Since $\alpha$ is slowly varying, by \eqref{new.alpha.high} the last
term is also negligible provided that $\epsilon$ is sufficiently
small. Hence \eqref{C.high} follows.

{\bf Proof of \eqref{com.high}:} We 
denote by $L$ the self-adjoint operator
\[
L = x_i a^{ij}_{(k)} D_j + D_i a^{ij}_{(k)} x_j
\]
and begin by calculating
\[
\begin{split}
\frac{1}i  [A_{(k)},\rho_{<k}]&\ =-D_ia^{ij}_{(k)}(\partial_j\rho_{<k})
 - a^{ij}_{(k)} (\partial_i \rho_{<k})D_j.
\\ &\ = -  |x|^{-1} \rho'_{<k}  L + i x_i a^{ij}_{(k)} \partial_j
(|x|^{-1}   \rho'_{<k})
\end{split}
\]
and
\[
2^k Q_k \rho_{<k} =   \delta \rho_{<k}  \phi(\delta |x|) L   -  i  x_i   a^{ij}_{(k)}
(\rho_{<k} \partial_j \phi(\delta |x|) + 2 \phi(\delta |x|) \partial_j\rho_{<k}).
\] 
Thus, after one integration by parts we obtain
\begin{multline}\label{com.expanded}
2^k \Im\la [A_{(k)}, \rho_{<k}]u,Q\rho_{<k} u\ra 
= -\delta \int |x|^{-1}  {\rho'_{<k}} \phi(\delta |x|)
\rho_{<k}  |L u|^2 dx dt
+ \int V |u|^2 dx dt
\end{multline}
where the scalar function $V$ is given by 
\[
\begin{split}
V =&\ (x_i a^{ij}_{(k)} \partial_j + \partial_i a^{ij}_{(k)}x_j) \left( \rho_{<k}  \phi(\delta |x|) 
 x_l a^{lm}_{(k)} \partial_m
(|x|^{-1}   \rho'_{<k}) \right)
\\ &\ +
  (x_i a^{ij}_{(k)} \partial_j+\partial_i a^{ij}_{(k)} x_j)  \left(  |x|^{-1} \rho'_{<k}    x_l   a^{lm}_{(k)}
(\rho_{<k} \partial_m \phi(\delta |x|) + 2 \phi(\delta |x|) \partial_m\rho_{<k})  \right)
\\ &\ - \left(
 x_i a^{ij}_{(k)} \partial_j
(|x|^{-1}   \rho'_{<k}) \right)
\left(     x_l   a^{lm}_{(k)}
[\rho_{<k} \partial_m \phi(\delta |x|) + 2 \phi(\delta |x|) \partial_m\rho_{<k}]\right). 
\end{split}
\]
Morally speaking, the first term in \eqref{com.expanded} is negative
and can be dropped.  This is true modulo the tails that are introduced
by the frequency cutoff which is applied to $\rho$.  Since
\[
|r^{-1} (\rho'(r) - \rho'_{<k}(r))| \lesssim 2^{-Nk} \la r\ra^{-N},
\]
 the error is estimated by
\[
 2^{-Nk} \|\la x \ra^{-2} u\|^2_{L^2_{t,x}}.
\]
On the other hand the weight $V$ is bounded and rapidly decreasing at
infinity,
\[
|V| \lesssim \la x\ra^{-N},
\]
from which \eqref{com.high} follows.
\end{proof}

We now return to the proof of Proposition~\ref{Qprop}.  We choose $Q_\rho $
of the form
\[
Q_\rho  = \sum_{k=-\infty}^\infty \rho S_k Q_k S_k   \rho
\]
where for each $k$ we have an $L^2$ bounded self-adjoint operator 
localized at frequency $2^k$. 

The $L^2$ boundedness of $Q_\rho $ follows from the $L^2$ boundedness
of $Q_k$, and the $\tX$ boundedness of $Q_\rho $ follows from the 
$X_{k,\alpha}$ boundedness of $Q_k$ after optimizing in $\alpha$. 
It remains to consider the commutator 
$C$. We write
\[
C = i \sum_k [A, \rho S_k Q_k S_k   \rho ].
\]
We first replace $A$ by $A_{(k)}$ and $\rho$ by $\rho_{<k}$
for $k > 0$ and by $1$ for $k < 0$. This generates error terms
which we need to estimate.

If $k < 0$ then these error terms are estimated
as follows.  We first want to substitute $A$ by $A_{(k)}$, and as such, we see errors of the
form
\begin{equation}\label{error1}
 |\la [A,\rho]u,\sum_{k<0} S_k Q_k S_k \rho u\ra|
+ |\sum_{k<0} \la (A-A_{(k)})\rho u, S_k Q_k S_k \rho u\ra|.
\end{equation}
For the first term, we use \eqref{Q.X.low} (after optimizing in $\alpha$)
\[
\begin{split}
|\la [A,\rho]u, \sum_{k<0}S_k Q_k S_k \rho u\ra|&\lesssim
|\la -2iD_i a^{ij}(\partial_j \rho) u + \partial_j((\partial_i \rho)a^{ij})u,\sum_{k<0}S_k Q_k S_k \rho u\ra|\\
&\lesssim \|u\|_{L^2_{t,x}(2^{M}< |x|< 2^{M+1})} \|\sum_{k<0}S_k Q_k S_k \rho u\|_{L^2_{t,x}(\{
2^{M}<|x|<2^{M+1}\})}\\
&\lesssim \|\la x\ra^{-2} u\|_{L^2_{t,x}} \|\sum_{k<0} S_k Q_k S_k \rho u\|_{\dX}\\
&\lesssim \|\la x\ra^{-2} u\|_{L^2_{t,x}} \|\rho u\|_{\tX}.
\end{split}
\]
For the second term in \eqref{error1}, we use \eqref{aak} and \eqref{Q.X.low} to see that
\[
\begin{split}
  |\sum_{k<0} \la (A-A_{(k)})\rho u, S_k Q_k S_k \rho u\ra|&\lesssim
\Bigl(\sum 2^{-k} \|S_k(A-A_{(k)})\rho u\|^2_{X'_k}\Bigr)^{\frac12} \|\rho u\|_{\dX}\\
&\lesssim \e \|\rho u\|^2_{\tX}. 
\end{split}
\]
For the remaining errors, we use the fact that $A_{(k)}$ preserves
localizations at frequency $2^k$ combined with
\eqref{coeffak.greater}, and \eqref{Q.L2.low} to see that
\[
\begin{split}
  |\sum_{k<0} \la A_{(k)} (1-\rho) u, S_k Q_k S_k \rho u\ra|
&\lesssim \|\la x\ra^{-2} u\|_{L^2_{t,x}} \|\la x\ra^2 \sum_{k<0} S_k Q_k S_k A_{(k)} (1-\rho)u\|_{L^2_{t,x}}\\
&\lesssim \|\la x\ra^{-2} u\|_{L^2_{t,x}} \|(1-\rho)u\|_{L^2_{t,x}}
\end{split}
\]
and respectively,
\[
\begin{split}
  |\sum_{k<0} \la A_{(k)} u, S_k Q_k S_k (1-\rho)u\ra|&\lesssim
\|\la x\ra^{-2} u\|_{L^2_{t,x}}\|\la x\ra^2 A_{(k)} \sum_{k<0}S_k Q_k S_k (1-\rho)u\|_{L^2_{t,x}}\\
&\lesssim \|\la x\ra^{-2} u\|_{L^2_{t,x}} \|(1-\rho)u\|_{L^2_{t,x}}.
\end{split}
\]
In both formulas above the last step is achieved by commuting the
$x^2$ factor to the right, where it is absorbed by the $(1-\rho)$
factor.  The two possible commutators may yield an extra $2^{-2k}$
factor, which is compensated for by the two derivatives in $A_{(k)}$.

On the other hand if $k \geq 0$ then we have the bound
\[
| \rho - \rho_{<k}| \lesssim 2^{-Nk} \la x \ra^{-N}.
\]
This estimate clearly provides summability in $k$, and 
the control for the correction terms similar to the
above ones follows from analogous arguments.  The terms, e.g., of the form
$\|(1-\rho)u\|_{L^2_{t,x}}$ are simply replaced by 
$\|\la x\ra^{-2} u\|_{L^2_{t,x}}$.

Hence we are left with the modified commutator
\[
\tilde C =  i \sum_{k < 0}  [A_{(k)}, S_k Q_k S_k  ]+ i
\sum_{k \geq 0}  [A_{(k)}, \rho_{<k} S_k Q_k S_k   \rho_{<k} ]
\]
where all terms are now frequency localized.  The first term is rewritten
in the form
\[
 i[A_{(k)}, S_k Q_k S_k  ] =  i[A_{(k)}, S_k] Q_k S_k + iS_k Q_k [A_{(k)},
 S_k] + S_k C_k S_k.
\]
For the first two terms we use the commutator estimate \eqref{comak} and the
$X_k$ boundedness of $Q_k$ \eqref{Q.X.low}.  We can, thus,
bound the corresponding inner products by 
\[
\epsilon \|\rho u\|^2_{\tX} + \e \|(1-\rho) u\|^2_{L^2_{t,x}}. 
\]
For the third term, we shall use \eqref{C.low}.

Next we consider the high frequency terms in $C$,
\begin{multline*}
 [A_{(k)}, \rho_{<k} S_k Q_k S_k   \rho_{<k} ] = 
  \rho_{<k} [A_{(k)}, S_k Q_k S_k]   \rho_{<k}  +  
 [A_{(k)}, \rho_{<k}] S_k Q_k S_k   \rho_{<k} \\+
 \rho_{<k} S_k Q_k S_k [A_{(k)}, \rho_{<k}].
\end{multline*}
The first term is treated as above but using \eqref{C.high} instead.
For the remaining two terms we commute both outside factors inside.
This yields a main contribution which is estimated by \eqref{com.high},
\[
\begin{split}
2 \Im \langle  [A_{(k)}, \rho_{<k}] S_k u,  Q_k \rho_{<k} S_k u \rangle
&\ \lesssim 2^{-k} \| \langle x\rangle^{-2} S_k u\|_{L^2_{t,x}}^2. 
\end{split}
\]
The remaining terms involve an extra commutation which kills the
remaining derivative in $A_{(k)}$. Also $ \rho_{<k}$ is differentiated,
which yields rapid decay at infinity. Hence we can bound them by 
\[
\| \la x \ra^{-2} S_k u\|_{L^2_{t,x}}^2.
\]

Summing up, we have proved that
\[
\begin{split}
\langle C u, u \rangle \geq &\ c_1\left( \sum_{k < 0} 2^k \|   S_k
  u\|_{X_{k.\alpha(k)}}^2 + \sum_{k > 0} 2^k \|   S_k \rho_{<k}
  u\|_{X_{k,\alpha(k)}}^2 \right)
\\ &\
- c_2 \left( \|\la x \ra^{-2}
u\|_{L^2_{t,x}}^2+ 
\e\|\rho u\|_{\tX}^2 \right)
\end{split}
\]
where for each $k$ we have used a different $\alpha$ denoted by
$\alpha(k)$. Optimizing with respect to all choices of $\alpha(k)$ we
obtain
\[
\begin{split}
\langle C u, u \rangle \geq &\ c_1\left( \sum_{k < 0} 2^k \|   S_k
  u\|_{X_{k}}^2 + \sum_{k > 0} 2^k \|   S_k \rho_{ < k}
  u\|_{X_{k}}^2 \right)
\\ &\
- c_2 \left( \|\la x \ra^{-2}u\|_{L^2_{t,x}}^2 + 
\e
\|\rho u\|_{\tX}^2 \right)
\end{split}
\]
which for $\e$ sufficiently small yields part (iii) of the proposition. 
\end{proof}

\subsection{The non-resonant low dimensional case $n=1,2$:  Proof of
  Theorem~\ref{main.ls.theorem}}

 Almost all the arguments
in the high dimensional case apply also in low dimension. The only
difference arises in part (ii) of Proposition \ref{Qprop}. Since
the multiplication by $\rho$ is bounded in both $\tX$ and $\tX'$,
the property (ii) reduces to proving that
\[
\sum_{k=-\infty}^\infty S_k Q_k S_k : \tX \to \tX.
\]
In dimension $n \geq 3$ the $\tX$ norm is described in terms of the
$X_k$ norms of its dyadic pieces, and the above property follows
from the $X_k$ boundedness of $Q_k$ at frequency $2^k$. 

However, in dimension $n=1,2$ the $\tX$ norm also has a weighted $L^2$
component. The high frequency part $k \geq 0$ of the above sum causes
no difficulty, but the low frequency part does. We do know that
\[
\sum_{k=-\infty}^0 S_k Q_k S_k : \dX \to \dX.
\]
Therefore, due to Lemma~\ref{txdx}, it would remain to prove that
\[
\Bigl\|  \sum_{k=-\infty}^0 S_k Q_k S_k u\Bigr\|_{L^2_{t,x}(\{|x| \leq 1\})} \lesssim
\|u\|_{\tX}.
\]
Unfortunately, the operators $S_k Q_k S_k$ act on the $2^{-k}$ spatial
scale; therefore without any additional cancellation there is no
reason to expect a good control of the output in a bounded region. The
aim of the next few paragraphs is to replace the above low frequency
sum by a closely related expression which exhibits the desired
cancellation property.

First of all, it is convenient to replace the discrete parameter $k$
by a continuous one $\sigma$. The operators $S_\sigma$ are defined in
the same way as $S_k$ by scaling. Let $\phi_k$ be the functions in
Lemma~\ref{lemma.low.freq}.  The functions $\phi_\sigma$ are defined
from $\phi_k$ using a partition of unity on the unit scale in $\sigma$.
The normalization we need is very simple, namely $\phi_k(0) = 1$,
which leads to $\phi_{\sigma}(0) = 1$. The operators $Q_\sigma$ are
defined in a similar way. Then it is natural to substitute
\[
 \sum_{k=-\infty}^0 S_k Q_k S_k  \to \int_{-\infty}^0  S_\sigma
 Q_\sigma S_\sigma d\sigma
\]
and all the estimates for the second sum carry over identically
from the discrete sum.

However, the desired cancellation is still not present in the second
sum. To obtain that we consider a spherically symmetric Schwartz function 
$\phi^0$ localized at frequency $\ll 1$ with $\phi^0(0) = 1$. 
Then we write $\phi_\sigma$ in the form
\[
\phi_\sigma(x) = \phi^0(x) + x^2 \psi_\sigma(x).
\]

The modified self-adjoint operators $\tQ_\sigma$ are defined as 
\[
\tQ_\sigma =  S_\sigma Q_{\sigma,\phi^0}  S_\sigma + 2^{2\sigma} \delta^2 x S_\sigma
Q_{\sigma,\psi_\sigma}  S_\sigma x
\]
where, as in Lemma~\ref{lemma.low.freq}, we set
\[
Q_{\sigma,\phi} =  \delta(Dx\phi( 2^\sigma \delta |x|)+\phi(2^\sigma \delta |x|)xD).
\]

We claim that the conclusion of Proposition~\ref{Qprop} is valid with
the operator $Q$ defined as 
\begin{equation}
Q_\rho  = \rho Q \rho, \qquad Q = \int_{-\infty}^0 \tQ_\sigma d\sigma +  \sum_{k=0}^\infty S_k Q_k S_k. 
\label{qlowd}
\end{equation}
The family $\mathcal Q$ is obtained as before by allowing the
choice of the functions $\phi_k$ to depend on the slowly varying
sequences $(\alpha^\sigma_{j})_{j \in \N}$ which are chosen
independently\footnote{In effect, without any restriction in generality,
  one may also assume that $\alpha^\sigma_{j}$ is also slowly varying
  with respect to $\sigma$} for different $k$.

There is no change in part (i) of Proposition~\ref{Qprop}. For part
(ii) we need to prove that
\begin{equation} \label{qtx}
\left\| Q  u \right \|_{\tX} \lesssim \|u\|_{\tX}.
\end{equation}
The high frequencies are estimated directly from the $\dX$ norm;
therefore we have to consider the integral term in $Q$ and show that
\[
\left\| \int_{-\infty}^0  \tQ_\sigma  u\, d\sigma\right \|_{\tX} \lesssim \|u\|_{\tX}.
\]

The $\dX$ component of the $\tX$ norm is easily estimated by Littlewood-Paley
theory, so due to Lemma~\ref{txdx}, it would remain to prove the local
$L^2$ bound
\begin{equation}\label{quo}
\left\| \int_{-\infty}^0  \tQ_\sigma  u d\sigma\right \|_{L^2_{t,x}(\{|x| \leq 1\})} \lesssim
\|u\|_{\tX}.
\end{equation}

We can neglect the time variable in the sequel.
We have the $L^2$ bound
\[
\| \tQ_{\sigma} u\|_{X_\sigma} \lesssim
 \|  S_\sigma u\|_{X_\sigma}
\]
which leads to 
\[
\| \nabla \tQ_{\sigma} u\|_{X_\sigma} \lesssim
2^{\sigma} \| S_\sigma u\|_{X_\sigma}
\]
and the corresponding pointwise bound
\[
\|  \nabla \tQ_{\sigma} u\|_{L^\infty (A_{<-\sigma})} \lesssim
2^{\frac{n+1}2 \sigma } \|  S_\sigma u\|_{X_\sigma}
\]
which establishes the convergence and the bound for the corresponding integral
\[
\left\| \int_{-\infty}^0   \nabla \tQ_{\sigma} u\,d\sigma \right\|_{L^\infty(A_{<0})} \lesssim
\| S_{\leq 0} u\|_{\dX}.
\]
Hence in order to prove \eqref{quo} it remains to establish a similar
bound for the integral at $x=0$. Assume first that $u \in L^2$, which
arguing as above guarantees the uniform convergence of the integral.
Denoting by $K_\sigma$ the kernel of $S_\sigma$ we have
\[
\begin{split}
(\tQ_{\sigma} u)(0) = &\  (S_\sigma Q_{\sigma,\phi^0}  S_\sigma) u(0) 
\\
= &\ \la K_\sigma, Q_{\sigma,\phi^0}  S_\sigma u \ra
=  \la Q_{\sigma,\phi^0} K_\sigma, S_\sigma u \ra 
\\ 
= & \ \int Q_{\sigma,\phi^0}(x,D_x) K_\sigma(x)  \int  K_\sigma(x-y) u(y) dy
dx
\\ =&\ (S_\sigma^1 u)(0)
\end{split}
\]
where $S_\sigma^1$ is the frequency localized multiplier with
spherically symmetric Schwartz kernel
\[
K_\sigma^1 = Q_{\sigma,\phi^0}(x,D_x) K_\sigma * K_\sigma.
\]
Due to the frequency localization we can define
\[
S_{<0}^1 = \int_{-\infty}^{0} S_\sigma^1 d \sigma.
\]
The punch line is that by construction the operators $S_\sigma^1$ have
the same kernel up to the appropriate rescaling. This implies that the
symbols of $S_{<0}^1$ are constant for $|\xi| \leq
2^{-4}$.  Hence both the symbols and the kernels
$K_{<0}^1$ of $S_{<0}^1$ are Schwartz functions which
coincide modulo rescaling. Hence for all functions $u \in L^2$ we have
\[
\int_{-\infty}^0  \tQ_{\sigma} u (0) d\sigma = \la K_{<0}^1, u\ra
\]
which leads to the estimate
\[
\left | \int_{-\infty}^0  (\tQ_\sigma  u)(0) d\sigma\right| \lesssim
\|u\|_{\tX}.
\]
This completes the proof of the estimate \eqref{quo} for all $u \in
L^2$, and, by density, shows that the integral
\[
\int_{-\infty}^0   \tQ_\sigma d\sigma
\]
has a unique bounded extension to $\tX$.

It remains to prove part (iii) of Proposition~\ref{Qprop}. If
$\tQ_\sigma$ is replaced by $S_\sigma Q_\sigma S_\sigma$ then the high
dimensional argument applies by simply replacing sums with integrals.
Hence it remains to estimate the difference. Commuting we obtain
\[
\tQ_\sigma - S_\sigma Q_\sigma S_\sigma =  i \delta^2 2^{2\sigma} \left( S'_\sigma
Q_{\sigma,\psi} x S_\sigma - S_\sigma x Q_{\sigma,\psi}   
S'_\sigma  -   S'_\sigma Q_{\sigma,\psi} S'_\sigma (D) \right).
\]
Commuting again to take advantage of the cancellation between the
first two terms, by semiclassical pdo calculus we can write
\[
\tQ_\sigma - S_\sigma Q_\sigma S_\sigma = \delta^2 R_\sigma (2^\sigma
\delta x, 2^{-\sigma} D)
\]
where the symbol $r_\sigma(y,\eta)$  is localized in $\{ |\eta|
\approx 1\}$ and satisfies
\[
|\partial_y^\alpha \partial_\eta^\beta r_\sigma(y,\eta)| \leq
c_{\alpha \beta} \la y \ra^{-2}.
\]
This implies the bound
\[
\| (\tQ_\sigma - S_\sigma Q_\sigma S_\sigma) u \|_{X'_\sigma} \lesssim
\delta^2 2^{-\sigma} \|S_\sigma u\|_{X_\sigma}. 
\]
Therefore without any commuting we obtain
\[
| \la [\tQ_\sigma - S_\sigma Q_\sigma S_\sigma,A_{(\sigma)}] u,u\ra |
\lesssim \delta^2 \| u \|_{\dX}^2.
\]
This error is negligible since, as one can note in the proofs of
Lemmas~\ref{lemma.low.freq}, \ref{lemma.high.freq},
 the constant $c_1$ in (iii)  has size $c_1 = O(\delta)$.

 \subsection{The resonant low dimensional case $n=1,2$: Proof of
   \ref{main.ls.theorem.res}}

The proof follows the same outline as in the non-resonant case, with minor
modifications. The energy estimate \eqref{eest} is now replaced
by 
\begin{equation}
\| u\|_{L^\infty_t L^2_x}^2 \lesssim \| u_0\|_{L^2}^2 + \| f_1\|_{L^1_t L^2_x}^2
+ \| u\|_{\dX_e} \|f_2\|_{\dX'_e}. 
\label{eest.res}\end{equation}
Instead of the exterior smoothing estimate \eqref{lsest},
we need to prove
\begin{equation}
  \| T_\rho u\|_{\dX}^2 \lesssim \| u\|_{L^\infty_t L^2_x}^2 + 
  \| f_1\|_{L^1_t L^2_x}^2 
  +  \|T_\rho f_2\|_{\dX'}^2 + \| \la x \ra^{-2}  (u-u_\rho) \|_{L^2_{t,x}}^2.
\label{lsest.res}\end{equation}
The estimate \eqref{lsext.res} then follows from the previous two estimates as
well as \eqref{dXerror}.

The lower order terms will still be negligible.  Indeed, letting
$B=2b^i D_i$, we have
\[
 T_\rho Bu =  B T_\rho u - (B \rho)(u-u_\rho)   + (1-\rho) \left( \int
   (1-\rho) dx\right)^{-1}  \int (B\rho)(u-u_\rho) dx.
\]
Therefore by \eqref{bnoc}, we obtain
\[
\| T_\rho Bu\|_{\dX'} \lesssim \e \|u\|_{\dX_e},
\]
which combined with the $\dX$ boundedness of our multiplier below shows that the lower order
terms can be neglected.

The estimate \eqref{lsest.res} follows from 

\begin{proposition} \label{Qprop.res}
There is a family $\cal Q_{res}$ of bounded self-adjoint operators $Q_{res}$ with 
the following properties:

(i) $L^2$ boundedness,
\[
\|Q_{res}\|_{L^2 \to L^2} \lesssim  1,
\]

(ii) $\dX$ boundedness,
\[
|\langle Q_{res} u, f \rangle | \lesssim \| T_\rho f\|_{\dX'} \| T_\rho u \|_{\dX},
\]

(iii) Positive commutator,
\[
\sup_{Q_{res} \in \cal Q_{res}} \langle Cu,u \rangle \geq  c_1 \|T_\rho u\|_{\dX}^2 -
c_2 \|\la x\ra^{-2}  (u-u_\rho) \|_{L^2_{t,x}}^2.
\]
\end{proposition}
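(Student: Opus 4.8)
\emph{Proof plan.} The idea is to reuse the operator $Q$ constructed in \eqref{qlowd} and take
\[
Q_{res} = T_\rho\, Q\, T_\rho ,
\]
with $\mathcal{Q}_{res}$ obtained, exactly as $\mathcal{Q}$ was in the non-resonant case, by letting the auxiliary choices in the construction of $Q$ range over all admissible possibilities. As in the discussion preceding the proposition the divergence-free first order term has already been removed perturbatively, so we may take $A = D_i a^{ij} D_j$; then $A$ annihilates constants, and since every Littlewood--Paley building block $S_k$, $S_\sigma$ appearing in $Q$ also does, we have $Q(1) = 0$. Parts (i) and (ii) are then immediate: $T_\rho$ and $Q$ are bounded on $L^2$ (for $T_\rho$, $|u_\rho| \lesssim \|u\|_{L^2(\{|x| \le 2^{M+1}\})}$), which gives (i); and since $T_\rho$ is self-adjoint, $\la Q_{res} u, f \ra = \la Q T_\rho u, T_\rho f \ra$, so (ii) reduces to $Q : \dX \to \dX$, which is weaker than the bound \eqref{qtx} already in hand and follows from Littlewood--Paley theory together with the $X_k$-boundedness of the $Q_k$ after optimizing over the auxiliary sequence.

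The content is in part (iii). The key observation is that $\la Cu, u \ra$ depends only on $\tilde u := u - u_\rho$: since $T_\rho$ leaves functions constant in $x$ unchanged and $Q(1) = 0$, the operator $Q_{res}$ kills such constants; as $A$ does too and $C = i[A, Q_{res}]$ is self-adjoint with $C(1) = 0$, one gets $\la Cu, u \ra = \la C\tilde u, \tilde u \ra$. This is exactly the mechanism by which the resonant (constant) component of $u$ drops out, and it lets us assume $u_\rho = 0$; then $T_\rho u = \rho u$, $\tilde u = u$, and the desired estimate becomes
\[
\la Cu, u \ra \gtrsim c_1 \|\rho u\|_{\dX}^2 - c_2 \|\la x \ra^{-2} u\|_{L^2_{t,x}}^2 .
\]

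Now I would write $T_\rho = M_\rho + R$, where $M_\rho$ is multiplication by $\rho$ and $R := T_\rho - M_\rho$ is the self-adjoint operator $Ru = (1-\rho) u_\rho$, supported in $\{|x| \le 2^{M+1}\}$, and expand $Q_{res} = M_\rho Q M_\rho + M_\rho Q R + R Q M_\rho + R Q R$. The principal term $M_\rho Q M_\rho = \rho Q \rho$ is precisely the non-resonant multiplier $Q_\rho$, so the non-resonant estimate of Proposition~\ref{Qprop}(iii) applies without change and, after the supremum over the family, gives $\sup_Q \la i[A, \rho Q \rho] u, u \ra \gtrsim c_1 \|\rho u\|_{\tX}^2 - c_2 \|\la x \ra^{-2} u\|_{L^2_{t,x}}^2$; since $\|\rho u\|_{\tX} \ge \|\rho u\|_{\dX} = \|T_\rho u\|_{\dX}$ this already dominates the required main term, and all error terms internal to this commutator (from replacing $A$ by $A_{(k)}$, from $[A_{(k)}, S_k]$, and from $\tilde Q_\sigma - S_\sigma Q_\sigma S_\sigma$) are absorbed there with no new work.

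It remains to control the three $R$-terms. After the reduction $u_\rho = 0$ one has $Ru = 0$, and expanding each $\la i[A, \cdot] u, u \ra$ shows that every surviving term is, up to a conjugation, a product of a scalar comparable to $(Au)_\rho$ or to $\la 1-\rho, Au \ra$ with a scalar comparable to $(Q(\rho u))_\rho$ or to $\la 1-\rho, Q(\rho u) \ra$. Integrating the divergence by parts and using $\int (D_i a^{ij} D_j \rho)\, dx = 0$ gives $|(Au)_\rho| + |\la 1-\rho, Au \ra| \lesssim \|u\|_{L^2(\{|x| \le 2^{M+1}\})}$, while $|(Q(\rho u))_\rho| + |\la 1-\rho, Q(\rho u) \ra| \lesssim \|Q(\rho u)\|_{L^2(\{|x| \le 2^{M+1}\})} \lesssim \|\rho u\|_{\tX} \lesssim \|\rho u\|_{\dX} = \|T_\rho u\|_{\dX}$, the middle steps using Lemma~\ref{txdx} (as $\rho u$ vanishes near the origin) and \eqref{qtx} — the latter being exactly the cancellation for which the modified blocks $\tilde Q_\sigma$ were introduced in \eqref{qlowd}. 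Hence each $R$-term is $\lesssim \|u\|_{L^2(\{|x| \le 2^{M+1}\})} \|T_\rho u\|_{\dX} \le \mu \|T_\rho u\|_{\dX}^2 + c_\mu \|\la x \ra^{-2} u\|_{L^2_{t,x}}^2$ (using $\la x \ra^{-2} \approx 1$ on the ball); combining with the principal term, uniformly over the family, and choosing $\mu$ small enough to absorb the $\mu$-term into the main term yields (iii). I expect this analysis of the $R$-terms to be the delicate part: one must check that after the $u \mapsto \tilde u$ reduction the only genuinely uncontrolled quantity — the constant part $(Au)_\rho$ — is tamed by a local $L^2$ norm, while the factor it multiplies stays bounded by $\|T_\rho u\|_{\dX}$ precisely because of the built-in cancellation of $Q$.
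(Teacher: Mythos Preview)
Your proof is correct and follows essentially the same approach as the paper: you define $Q_{res}=T_\rho Q T_\rho$, reduce to $u_\rho=0$ (equivalently, replace $u$ by $u-u_\rho$), recover the non-resonant commutator $\langle i[A,Q_\rho](u-u_\rho),u-u_\rho\rangle$ as the main term, and bound the remaining error --- a product of $\langle u-u_\rho,A\rho\rangle$ with $\langle 1-\rho,QT_\rho u\rangle$ --- using \eqref{qtx} together with Lemma~\ref{txdx}. The paper reaches the same decomposition more directly via the identities $S_kT_\rho u=S_k\rho(u-u_\rho)$ and $T_\rho Au=\rho A(u-u_\rho)-c(1-\rho)\int(u-u_\rho)A\rho\,dx$, whereas you get there by writing $T_\rho=M_\rho+R$ and expanding; the content is identical.
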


\begin{proof}
  We construct $Q_{res}$ as in the non-resonant case  but with the
  modified truncation operator
\[
Q_{res} u =  T_\rho Q T_\rho.
\]
with $Q$ given by \eqref{qlowd}.

The properties (i) and (ii) are straightforward.
For (iii) we note that 
\[
S_k T_\rho u = S_k \rho(u-u_\rho)
\]
while
\[
 T_\rho  A u =  \rho Au + c (1-\rho) \int (1-\rho) A (u-u_\rho) dx = 
 \rho A(u-u_\rho) - c (1-\rho) \int  (u-u_\rho) A \rho dx.
\]
Hence we can express the bilinear form $\la Au, Q_{res} u \ra$ in
terms of the operator $Q_\rho$ in the nonresonant case
\[
\la Au, Q_{res} u \ra = \la A(u-u_\rho), Q_\rho (u-u_\rho) \ra - c \int
(u-u_\rho) A \rho dx \  \la (1-\rho), Q T_\rho u \ra
\]
which implies that
\[
\la C_{res} u,u\ra = \la C (u-u_\rho),u-u_\rho\ra + c \Im \int
(u-u_\rho) A \rho dx\  \la (1-\rho), Q T_\rho u \ra.
\]
Hence we  can apply part (iii) of Proposition~\ref{Qprop} and
\eqref{qtx} to obtain the desired conclusion.
\end{proof}

\subsection{Non-trapping metrics: Proof of Theorem~\ref{main.ls.theoremnt}.}
This requires some modifications of the previous argument. First of
all, instead of the energy estimate \eqref{eest}, we need a
straightforward modification of it, namely
\begin{equation}
\| u\|_{L^\infty_t L^2_x}^2 \lesssim \| u_0\|_{L^2}^2 + \| f_1\|_{L^1_t L^2_x}^2
+ \| u\|_{\tX} \|f_2\|_{\tX'}.
\label{eestnt}\end{equation}
We still need the exterior local smoothing estimate \eqref{lsest}.
However, now we can complement it with an interior estimate,
namely
\begin{equation}
\| (1-\rho) u\|_{\tX}^2 \lesssim \| u\|_{L^\infty_t L^2_x}^2 + 
\| f_1\|_{L^1_t L^2_x}^2  + \| \rho u\|_{\tX}^2
+  \|(1-\rho) f_2\|_{\tX'}^2 + \| (1-\rho) u\|_{L^2_{t,x}}^2. 
\label{lsestnt}\end{equation}
The conclusion of Theorem~\ref{main.ls.theoremnt} is obtained by combining 
the three estimates \eqref{eestnt}, \eqref{lsest} and \eqref{lsestnt}.

It remains to prove \eqref{lsestnt}. This is obtained by applying 
to the function $v =  (1-\rho) u$ the local bound

\begin{proposition}
  Assume that the coefficients $a^{ij}$, $b^i$, $c$ are real and satisfy \eqref{coeff}, \eqref{coeffb},
  and \eqref{coeffc}. Moreover, assume that
  the metric $a^{ij}$ is non-trapping. Let $v$ be a
  function supported in $\{|x| \leq 2^{M+1}\}$ which solves the
equation
\begin{equation}
(D_t +A) v = g_1 + g_2, \qquad v(0) = v_0
\end{equation}
in the time interval $[0,T]$. 
Then we have 
\begin{equation}
\| v \|_{L^2_t H^{\frac12}_x}^2 \lesssim \| v\|_{L^\infty_t L^2_x}^2 + 
\| g_1\|_{L^1_t L^2_x}^2  +  \|g_2\|_{L^2_t H^{-\frac12}_x}^2 + \| v\|_{L^2_{t,x}}^2. 
\label{leint}\end{equation}
\end{proposition}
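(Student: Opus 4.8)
The plan is to prove \eqref{leint} by the positive commutator method with a single pseudodifferential multiplier $Q$ adapted to the non-trapping geometry, in the same spirit as Lemmas~\ref{lemma.low.freq}--\ref{lemma.high.freq} but using a genuine escape function in place of the radial multipliers there. If $Q$ is a bounded self-adjoint operator and $C=i[A,Q]$, then integrating the identity
\[
\frac{d}{dt}\langle v,Qv\rangle = -2\Im\langle(D_t+A)v,Qv\rangle + \langle Cv,v\rangle
\]
over $[0,T]$ and using $L^2$-boundedness of $Q$ gives
\[
\int_0^T\langle Cv,v\rangle\,dt \lesssim \|v\|_{L^\infty_tL^2_x}^2 + \|g_1\|_{L^1_tL^2_x}\|v\|_{L^\infty_tL^2_x} + |\langle Qv,g_2\rangle_{L^2_{t,x}}|,
\]
and the last term is $\lesssim\|v\|_{L^2_tH^{1/2}}\|g_2\|_{L^2_tH^{-1/2}}$ once $Q$ is also bounded on $H^{1/2}$. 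Hence \eqref{leint} reduces, after a bootstrap absorbing the $\|v\|_{L^2_tH^{1/2}}$ factors, to the commutator lower bound
\[
\langle Cv,v\rangle \gtrsim \|v\|_{L^2_tH^{1/2}}^2 - C\|v\|_{L^2_{t,x}}^2
\qquad\text{for all } v \text{ supported in } \{|x|\le 2^{M+1}\}.
\]
The lower order terms $b^iD_i + D_ib^i + c$ are moved to the right-hand side and controlled perturbatively by Proposition~\ref{lemma.lower.order}, so that in the construction of $Q$ we may take $b=0$, $c=0$.

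To prove the commutator bound I would first discard low frequencies: since $\|S_{<0}v\|_{L^2_tH^{1/2}}\lesssim\|v\|_{L^2_{t,x}}$, it suffices to establish positivity microlocally on $\{|\xi|\gtrsim 1\}$, which also allows inserting frequency cutoffs freely. The heart of the matter is the construction of $Q=q(x,D)$ from the non-trapping hypothesis. Because the principal symbol $a(x,\xi)=a^{ij}(x)\xi_i\xi_j$ is elliptic there are no glancing directions, and the $H_a$-flow restricted to the cosphere bundle is a reparametrization of the geodesic flow; the non-trapping assumption says that for $R=2^{M+2}$ every geodesic segment inside $\{|x|<R\}$ has length at most some $L$. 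This is exactly the input needed for the standard escape-function construction (see, e.g., \cite{Doi, CKS}): one integrates a fixed cutoff along the backward $H_a$-flow to produce a real symbol $q\in S^0$, supported in $x$ in $\{|x|<R\}$ and homogeneous of degree $0$ in $\xi$ for $|\xi|\ge 1$, which is monotone along the flow within $\{|x|<R\}$ and hence satisfies
\[
\{a,q\}(x,\xi) \ge c|\xi| \qquad\text{on } \{|x|\le 2^{M+1}\}\cap\{|\xi|\ge 1\}
\]
modulo a symbol compactly supported in $\xi$; here $\{a,q\}$ is the principal symbol of $C=i[A,Q]$. The loss of monotonicity of $q$ occurs only for $|x|\ge R$, which is disjoint from $\operatorname{supp} v$, so it is harmless; a cutoff argument confines all of $Cv$ to a slightly enlarged compact set, the spillover being a smoothing term bounded by $\|v\|_{L^2_{t,x}}$.

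Given such a $q$, the sharp G\aa rding inequality turns $\{a,q\}\ge c|\xi|$ into $\langle Cv,v\rangle\gtrsim\|v\|_{H^{1/2}}^2 - C\|v\|_{L^2}^2$ on the high-frequency part (the $L^2$ error also absorbing the $\xi$-compactly-supported correction), and integrating in $t$ closes the reduction. The genuine technical difficulty is that the coefficients are only $C^2$, so the $H_a$-flow is merely $C^1$ and the transport equation for $q$ cannot be solved with the regularity needed for the symbol calculus at order $1$. I would handle this exactly as in Section~\ref{not_para}: work frequency by frequency, replacing $A$ by the mollified operator $A_{(k)}$ (with smooth coefficients still satisfying \eqref{coeffak.greater}), build $Q_k$ for $A_{(k)}$ by the escape-function method above, prove $\langle i[A_{(k)},Q_k]S_kv,S_kv\rangle \gtrsim 2^k\|S_kv\|_{L^2}^2 - C2^k\|\langle x\rangle^{-N}S_kv\|_{L^2}^2$ with constants uniform in $k\ge 0$, sum over $k$ with square-summability, and pass back from $A_{(k)}$ to $A$ using \eqref{aak} and \eqref{comak}. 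The main obstacle is thus the escape-function construction itself, which is the only genuinely new ingredient required by the non-trapping hypothesis, together with the bookkeeping of making it uniform across dyadic frequency scales and of verifying that each error term is either absorbed by the bootstrap or dominated by $\|v\|_{L^2_{t,x}}^2$ — here the compact support of $v$ is used to dispense entirely with spatial weights.
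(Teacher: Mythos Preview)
Your overall strategy is exactly that of the paper: use a single pseudodifferential multiplier $Q=q(x,D)$ built from a non-trapping escape function with $H_a q \gtrsim |\xi|$ on $\{|x|\le 2^{M+1}\}$, integrate the identity $\frac{d}{dt}\langle v,Qv\rangle = -2\Im\langle(D_t+A)v,Qv\rangle + \langle i[A,Q]v,v\rangle$, and close via G\aa rding's inequality. The paper states the escape-function existence as a separate proposition (essentially Doi's construction, with \cite{ST} for limited regularity), and handles the lower order terms more directly than you do: $[b^iD_i,Q]$ is simply an $L^2$-bounded commutator of an order-$1$ and an order-$0$ operator, and $c$ is already $L^2$-bounded, so there is no need to invoke Proposition~\ref{lemma.lower.order}.

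The one real divergence is in how you propose to cope with the $C^2$ regularity of the coefficients. You set up a full paradifferential decomposition, building a separate escape function $Q_k$ for each mollified operator $A_{(k)}$ and then reassembling via \eqref{aak}--\eqref{comak}. This can be made to work, but it imports nontrivial bookkeeping (uniformity in $k$ of the non-trapping constants for the mollified flows, uniform control of the $Q_k$, etc.) that the paper sidesteps entirely. The paper instead uses a \emph{single} smooth homogeneous symbol $q\in S^0_{hom}$ and appeals directly to the symbol calculus and G\aa rding inequality for operators with $C^2$ coefficients, citing Taylor \cite[p.~45]{TaylorIII}. Since the multiplier $Q$ has a smooth symbol and only the coefficients of $A$ are rough, the calculus for $i[A,Q]=\mathrm{Op}(H_aq)+O(1)_{L^2\to L^2}$ and the G\aa rding step go through at this regularity without any frequency decomposition. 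Your route is not wrong, just heavier than necessary.
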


\begin{proof}
  We use again the multiplier method.  The following lemma tells us
  how to choose an appropriate multiplier.
\begin{proposition}\label{Doi_construction}
  Assume that the coefficients $a^{ij}$ satisfy \eqref{coeff}.
  Moreover, we assume that the Hamiltonian vector field $H_a$ permits
  no trapped geodesics.  Then there exists a smooth, time-independent, real-valued symbol $q\in S^0_{hom}$
  so that
\[
H_a q\gtrsim |\xi|,\quad \text{ in } \{|x| \leq 2^{M+1}\}.
\]
\end{proposition}

This proposition is essentially from \cite{D1}, if $a^{ij}$ were smooth.  See also 
Lemma 1 of \cite{ST}, which includes some discussion of the limited regularity. 

Working in the Weyl calculus and using this multiplier $Q$,  we compute
\[
\frac{d}{dt}\la v,Qv\ra = -2\Im\la (D_t+A)v,Qv\ra
+ i\la [A,Q]v,v\ra 
\]
which after time integration yields
\[
\la i [A,Q]v,v\ra = \la v,Qv \ra |^T_0 + 2\Im\la g_1+g_2,Qv\ra. 
\]
For the second term on the right,  we  apply Cauchy-Schwarz and use the
$L^2$ and $H^\frac12$ boundedness of $Q$ to obtain
\[
|\la (D_t+A)v,Qv\ra| \lesssim \|v\|_{L^\infty_t L^2_x}^2  + \|g_1\|_{L^1_t
  L^2_x}^2  + \|g_2\|_{L^2_t H_x^{-\frac12}} \| v\|_{L^2_t H_x^\frac12}.
\]
Hence
\[
\la i [A,Q]v,v\ra \lesssim 
\|v\|_{L^\infty_t L^2_x}^2  + \|g_1\|_{L^1_t
  L^2_x}^2  + \|g_2\|_{L^2_t H^{-\frac12}_x} \| v\|_{L^2_t H^\frac12_x}.
\]
Then it remains to prove the positive commutator bound
\begin{equation}
\la i [A,Q]v,v\ra \geq c_1 \|v\|_{L^2_t H^\frac12_x}^2 - c_2 \|v\|_{L^2_{t,x}}^2.
\end{equation}
The positive contribution comes from the second order terms in $P$.
Precisely, we have
\[
i[ D_i a^{ij} D_j,Q(x,D)] = Op(H_a q) +O(1)_{L^2 \to L^2}.
\]
The first symbol is positive, and we can obtain a bound from below by
G\aa rding's inequality.  The first order term yields an $L^2$ bounded commutator, and the zero
order term is $L^2$ bounded by itself.
 
Here, we remind the reader that
we are not working with classical smooth symbols but instead with
symbols of limited regularity, and we refer the interested reader to
the discussion in Taylor \cite[p. 45]{TaylorIII} for further details
on these otherwise classical results.
\end{proof}

\subsection{Non-trapping metrics: Proof of Theorem~\ref{main.ls.theoremnt.res}.}

The argument is similar to the above one, with some obvious
modifications. Instead of \eqref{eestnt} we have
\begin{equation}
\| u\|_{L^\infty_t L^2_x}^2 \lesssim \| u_0\|_{L^2}^2 + \| f_1\|_{L^1_t L^2_x}^2
+ \| u\|_{\dX} \|f_2\|_{\dX'} 
\label{eestnt.res}\end{equation}
while \eqref{lsestnt} is replaced by 
\begin{multline}
\| (1-\rho) (u-u_\rho)\|_{\dX}^2 \lesssim \| u\|_{L^\infty_t L^2_x}^2 + 
\| f_1\|_{L^1_t L^2_x}^2  + \| \rho (u-u_\rho)\|_{\dX}^2
+  \|f_2\|_{\dX'}^2 \\+ \| \la x\ra^{-2} (u-u_\rho)\|_{L^2_{t,x}}^2. 
\label{lsestnt.res}\end{multline}

The conclusion of Theorem~\ref{main.ls.theoremnt.res} is obtained by combining 
the estimates \eqref{eestnt.res}, \eqref{lsest.res} and \eqref{lsestnt.res} and
applying \eqref{dXerror} to reduce the error terms to the form presented in \eqref{lsntres}.

It remains to prove \eqref{lsestnt.res}.  We first compute
\[\begin{split}
D_t u_\rho &= \Bigl(\int (1-\rho)\:dx\Bigr)^{-1}\Bigl[\la (D_t+A)u, (1-\rho)\ra
- \la Au, (1-\rho)\ra\Bigr]\\
&= \Bigl(\int(1-\rho)\:dx\Bigr)^{-1}\Bigl[\la f_1+f_2,(1-\rho)\ra - \la u-u_\rho,A(1-\rho)\ra\Bigr].
\end{split}
\]
The function $v = (1-\rho) (u-u_\rho)$ solves
\begin{multline*}
P v = (1-\rho)(f_1+f_2) -(1-\rho)\Bigl(\int(1-\rho)\:dx\Bigr)^{-1}\Bigl[\la f_1+f_2,(1-\rho)\ra
-\la u-u_\rho, A(1-\rho)\ra\Bigr]
\\+[A,(1-\rho)](u-u_\rho).
\end{multline*}
Then we apply \eqref{leint} to $v$ to obtain
\[
\begin{split}
  \| v\|_{L^2_t H^\frac12_x}^2 &\lesssim  \|v\|_{L^\infty_t L^2_x}^2 + \|
  (1-\rho) f_1 \|_{L^1_t L^2_x}^2 +\| [A,(1-\rho)] (u-u_\rho)\|_{L^2_t
    H^{-\frac12}_x} \\ &\qquad\qquad\qquad\qquad\qquad\qquad\qquad + \| (1-\rho) f_2 \|_{L^2_t H^{-\frac12}_x}^2 
   + \| \la x\ra^{-2}(u- u_\rho)\|_{L^2_{t,x}}^2 \\ & \
  \lesssim \|u\|_{L^\infty_t L^2_x}^2 + \| f_1 \|_{L^1_t L^2_x}^2 +
  \|\rho(u-u_\rho)\|_{\dX}^2 + \| f_2 \|_{\dX'}^2 
+ \| \la x\ra^{-2}(u- u_\rho)\|_{L^2_{t,x}}^2
\end{split}
\]
and \eqref{lsestnt.res} follows.


\bigskip
\newsection{Time independent nontrapping metrics}

The aim of this section is to prove
Theorems~\ref{theorem.PcSmoothing},\ref{theorem.PcSmoothing.res}.
Thus we work with a nontrapping, self-adjoint operator $A$ whose
coefficients are time independent.  We prove
Theorem~\ref{theorem.PcSmoothing} in detail, and then outline the
modifications which are needed for
Theorem~\ref{theorem.PcSmoothing.res}.

\subsection{Proof of Theorem~\ref{theorem.PcSmoothing}}  Here we shall provide the
details for the $n\neq 2$ case.  The general case follows with the obvious logarithmic adjustments
to the $\tX$ spaces in $n=2$.

We break the proof into steps.

  \par{\bf Step 1:} Without any restriction, we assume that $u_0=0$
  and that $u$ is the forward solution to \eqref{main.equation}.
  Nonzero initial data $u_0$ can be easily added in via a $TT^*$
  argument.

  \par{\bf Step 2:}  We add a damping term to the equation
\[
(D_t+A-i\varepsilon)u_\varepsilon = f
\]
in order to insure global square integrability of the solution
$u_\epsilon$.  Applying our nontrapping estimate \eqref{lsnt} we have
\begin{equation}
\|u_\varepsilon\|_\tX\lesssim
\|f\|_{\tX'}+\|u_\varepsilon\|_{L^2_{t,x}(\R\times B(0,2R))}.
\label{veps}\end{equation}
We want to eliminate the second term on the right (when we add $P_c$
on the left).

  \par{\bf Step 3:} 
  We want to take a Fourier transform in time and use Plancherel's
  theorem. For this we need to work with Hilbert spaces. These are
  defined using the structure introduced in the previous section.  We
  denote by $\alpha$ a family of positive sequences $(\alpha(k)_j)_{j
    \geq k^-}$ which have sum $1$ for each $k$ and by $\mathcal
  A$ the collection of such sequences. For $\alpha \in \cal A$ we
  define the Hilbert space $\tX_\alpha$ with norm
\[
\| u\|_{\tX_\alpha}^2 = \sum_{k} 2^k\|S_k u\|_{X_{k,\alpha(k)}}^2 + \|\la
x\ra^{-1} u\|_{L^2_{t,x}}^2
\]
as well as its dual $\tX_\alpha'$. 
Since
\[
\| u\|_{\tX} \approx  \sup_{\alpha \in \cal A} \| u\|_{\tX_\alpha},
\qquad
\| u\|_{\tX'} \approx  \inf_{\alpha \in \cal A} \| u\|_{\tX'_\alpha}
\]
we can rewrite \eqref{veps} in the equivalent form
\[
\|u_{\varepsilon}\|_{\tX_\alpha}\lesssim \|f\|_{\tX'_\beta}+
\|u_\varepsilon\|_{L^2_{t,x}(\R\times B(0,2R))}, \qquad \alpha, \beta
\in \cal A.
\]
We denote by $X^0_\alpha$ the spatial version of $X_\alpha$, i.e.
$X_\alpha = L^2_t X_\alpha^0$.  Then we take a time Fourier transform,
and by Plunderer this is equivalent to
\[
\|\hat{u}_\varepsilon\|_{L^2_\tau \tX_\alpha^0}\lesssim 
\|\hat{f}\|_{L^2_\tau (\tX^0_\beta)'} + \|\hat{u}_\varepsilon
\|_{L^2_{\tau,x}(\R\times B(0,2R))}.
\]
This is in turn equivalent to the fixed $\tau$ bound
\[
\|\hat{u}_\varepsilon(\tau)\|_{\tX_\alpha^0}\lesssim
\|\hat{f}(\tau)\|_{(\tX_\beta^0)'}
+\|\hat{u}_\varepsilon(\tau)\|_{L^2(B(0,2R))},
\]
which we rewrite in the form
\[
\|v  \|_{\tX^0_\alpha}\lesssim
\|(A-\tau-i\varepsilon) v \|_{(\tX_\beta^0)'}+\| v \|_{L^2(B(0,2R))},
\]
or, optimizing with respect to $\alpha,\beta \in \cal A$,
\begin{equation}\label{have}
\|v  \|_{\tX^0}\lesssim
\| (A-\tau-i\varepsilon)v \|_{(\tX^0)'}+\| v \|_{L^2(B(0,2R))}.
\end{equation}
A similar
computation shows that the estimate that we want to prove, namely 
\eqref{PcSmoothing} with $u_0=0$, can be rewritten in the equivalent
form 
\begin{equation}\label{want}
  \|P_c v  \|_{\tX^0}\lesssim \|  (A-\tau-i\e) v \|_{(\tX^0)'}
\end{equation}
uniformly with respect to $\tau \in \R$, $\e > 0$.

   \par{\bf Step 4:}  When $|\tau|$ is large, \eqref{want}
follows from \eqref{have} combined  with the elliptic bound
\begin{equation}\label{ellip}
\tau^{1/4} \|v\|_{L^2(B(0,2R))}\lesssim \|v\|_{\tX^0}+\|(A-\tau-i\e)v\|_{(\tX^0)'}.\end{equation}
To prove this we replace $v$ by $w = (1-\rho) v$ and rewrite 
it in the form
\[
\tau^{1/4} \|w\|_{L^2}\lesssim 
\|w\|_{H^\frac12}+\|(A-\tau-i\e)w\|_{H^{-\frac12}}
\]
for $w$ with compact support. Since
\[
\tau \| w \|_{H^{-\frac32}} \lesssim \|(A-\tau-i\e)w\|_{H^{-\frac32}}
+ \| A w\|_{H^{-\frac32}} \lesssim \|(A-\tau-i\e)w\|_{H^{-\frac12}}
+ \| w\|_{H^{\frac12}},
\]
the bound \eqref{ellip} follows by interpolation.
\par{\bf Step 5:}
For $\tau$ in a bounded set we argue by contradiction.
If  \eqref{want} does not hold uniformly  then we find sequences 
\[
\varepsilon_n\to 0, \qquad  \tau_n\to \tau, 
\]
and $ v_n \in \tX^0$ with $P_c v_n=v_n$ and
\[
\|(A-\tau_n-i\varepsilon_n)v_n\|_{(\tX^0)'}\to 0,\quad
\|v_n\|_{L^2(B(0,2R))}=1.
\]
On a subsequence we have
\[
v_n\to v\quad \text{weakly* in} \quad \tX^0.
\]
Since $\tX^0 \subset H^{\frac12}_{loc}$, on a subsequence
we have the strong convergence
\[
v_n \to v \qquad \text{in }  L^2_{loc}.
\]
Hence we have produced a function $v $ with 
\begin{equation}
  v \in \tX^0, \qquad P_c v = v, \qquad (A-\tau) v = 0, 
\qquad \| v\|_{L^2(B(0,2R))}=1.
\label{vprop}\end{equation}
Depending on the sign of $\tau$ we consider three cases.

\par{\bf Step 6:}
If $\tau < 0$ then, using the bound \eqref{bc} for the lower order terms
in $A$, we obtain
\[
\| D_i a^{ij} D_j v -\tau v\|_{(\tX^0)'} \lesssim \| v\|_{\tX^0}.
\]
Then
\[
\| v\|_{\tX^0}^2 \gtrsim \la v, D_i a^{ij} D_j v -\tau v \ra 
\gtrsim \|v\|_{H^1}^2 ,
\]
and therefore $v \in L^2$ is an eigenfunction. This contradicts 
the relation $P_c v = v$.

\par{\bf Step 7:} 
If $\tau = 0$ then there is either a zero eigenvalue or a zero
resonance, both of which are excluded by hypothesis.

\par{\bf Step 8:} 
It remains to consider the most difficult case $\tau > 0$. Here 
the properties \eqref{vprop} of $v$ are no longer sufficient 
to obtain a contradiction. Instead we will establish an additional
property of $v$, namely that $v$ satisfies an outgoing radiation
condition. In order to state this, we need an additional 
regularity property for $v$. We define the space $\tX^{0}_{med}$
with norm
\[
\| v \|_{\tX^{0}_{med}} = \| v\|_{L^2(D_{0})} + \|\nabla
v\|_{L^2(D_{ 0})} + \sup_{j > 0} \| |x|^{-\frac12} v\|_{L^2(D_j)} +
  \| |x|^{-\frac12} \nabla v\|_{L^2(D_j)}
\]
which coincides with the $\tX^0$ norm for intermediate frequencies but
improves it at both low and high frequencies.  Then we claim that $v
\in \tX^0_{med}$. More precisely, we will prove the elliptic bound
\begin{equation}
\| v \|_{\tX^{0}_{med}}  \lesssim \|v\|_{\tX^0} +
  \|(A-\tau-i\e)v\|_{(\tX^0)'}, \qquad 0 < \tau_0 < \tau < \tau_1 
\label{vregb} \end{equation}
with implicit constants which may depend on the thresholds
$\tau_0$, $\tau_1$.  

Now we define the closed subspace $\tX^0_{out}$ of $\tX^0$,
\[
\tX^0_{out}=\{v\in \tX^0_{med}\,:\, \lim_{j\to\infty} \|r^{-1/2}(\partial_r
-i\tau^{1/2})v\|_{L^2(D_j)}=0\},
\]
and  also claim that $v$ has the additional property
\begin{equation}
v \in \tX^0_{out}.
\label{vout} \end{equation}
In other words this implies that $v$ is a resonance contained 
inside the continuous spectrum.

We postpone the proof of \eqref{vregb} and \eqref{vout} 
and conclude first our proof by contradiction, by showing
that there are no resonances inside the continuous spectrum.
Such results are known, see for instance \cite{Agmon}, but perhaps
not in the degree of generality we need here. In any case,
for the sake of completeness, we provide a full proof. 

Let $\chi$ be a smooth spherically symmetric increasing bump function
$\chi$ with $\chi(r)\equiv 0$ for $r<1/2$ and $\chi(r) \equiv 1$
for $r>2$. Since $A$ is self-adjoint, for large $j$ we commute
\[
\begin{split}
0=&\ \frac{i}{2}\la [A,\chi(2^{-j}r)]v,v\ra \\
 = &\ \Im\left\la 2^{-j} \chi'(2^{-j} r)
\Bigl(\frac{x_ia^{ij}}{r}\partial_j -i\tau^{1/2}\Bigr)v,v\right\ra
+2^{-j} \tau^{1/2}\la \chi'(2^{-j}r) v,v\ra
\\&\qquad\qquad\qquad\qquad\qquad\qquad\qquad\qquad\qquad\qquad
+2^{-j}\Bigl\la b^i\frac{x_i}{r} \chi'(2^{-j}r) v, v\Bigr\ra .
\end{split}
\]
Using the Schwarz inequality, \eqref{coeffb.M}, and the outgoing radiation condition, we
conclude that
\begin{equation}\label{initial.decay}
\lim_{j\to\infty} \|r^{-1/2} v\|_{L^2(D_j)}=0
\end{equation}
which shows that $v$ has better decay at infinity. We note that 
this is the only use we make of the radiation condition. From this, by
elliptic theory, we also obtain a similar decay for the gradient,
\begin{equation}\label{dinitial.decay}
\lim_{j\to\infty} \|r^{-1/2} \nabla v\|_{L^2(D_j)}=0.
\end{equation}

To conclude we use \eqref{initial.decay} and \eqref{dinitial.decay} to
show that in effect $v \in L^2$; i.e. $v$ is an eigenvalue.  Then by
the results of \cite{KT2} $v$ must be $0$.  Here, we shall again use a
positive commutator argument.  The multiplier we use is the operator
$Q_k$, for some $k\le 0$, in Lemma~\ref{lemma.low.freq} but where for simplicity we set
$\delta = 1$.  We have
\[
0=-2\Im \la Q_k v, (A-\tau)v\ra = \la C_k v,v\ra - 2 
\Im \la Q_k v, (b^j D_j + D_j b^j + c)v\ra
\]
where
\[
 C_k =  i [D_l a^{lm} D_m,Q_k].
\]
The expression of the operator $C_k$ is exactly as in the 
formula \eqref{C} but with unmollified coefficients $a^{ij}$.
The main contribution $C_k^0$ is estimated as there
by
\[
\la C_k^0 v,v\ra \gtrsim \left\la \frac{\alpha(2^k |x|)}{\la 2^k
    x\ra} \nabla v,\nabla v \right\ra, 
\]
while the error terms are bounded by
\[
\left\la \frac{\kappa ( |x|)}{\la 2^k x\ra} \nabla v,\nabla v \right\ra 
\]
respectively 
\[
\left\la \la x \ra^{-2}  v, v \right\ra. 
\]
The expression $\Im \la Q_k v, (b^j D_j + D_j b^j + c)v\ra$ can
also be included in the two error terms.
Thus we obtain
\[
\left\la \frac{\alpha(2^k |x|)}{\la 2^k x\ra} \nabla v,\nabla v \right\ra 
\lesssim \left\la \frac{\kappa ( |x|)}{\la 2^k x\ra} \nabla v,\nabla v \right\ra 
+ \left\la \la x \ra^{-2}  v, v \right\ra. 
\]
For $|x| > 2^{M}$ we have, by \eqref{new.alpha},
\[
\kappa(x) \lesssim \e \alpha(2^k x);
\]
therefore the first term on the right is essentially negligible. We obtain
\[
\int \frac{\alpha(2^k |x|)}{\la 2^k x\ra}  |\nabla v|^2 dx 
\lesssim \int_{D_{ < M}}   |\nabla v|^2 dx 
+ \int \la x \ra^{-2} |v|^2 dx .
\]
At the same time we have
\[
0 = \left\la  \frac{\alpha(2^k |x|)}{\la 2^k x\ra} v, (A-\tau) v  \right\ra,  
\]
which after an integration by parts yields
\[
\tau \int \frac{\alpha(2^k |x|)}{\la 2^k x\ra}  |v|^2 dx 
\lesssim \int \frac{\alpha(2^k |x|)}{\la 2^k x\ra}  |\nabla v|^2 dx 
+ \int  \la x \ra^{-2} |v|^2 dx. 
\]
Combining the two relations we obtain
\[
\int \frac{\alpha(2^k |x|)}{\la 2^k x\ra} ( |\nabla v|^2 +|v|^2 ) dx 
\lesssim \int_{D_{ < M}}   |\nabla v|^2 dx 
+ \int \la x \ra^{-2} |v|^2 dx. 
\]
Finally we let $k \to -\infty$ to obtain
\[
\int  |\nabla v|^2 +|v|^2  dx 
\lesssim \int_{D_{ < M}}   |\nabla v|^2 dx 
+ \int \la x \ra^{-2} |v|^2 dx < \infty 
\]
which shows that $v \in L^2$.  

We note that \eqref{initial.decay} and
\eqref{dinitial.decay} are not used in any quantitative way but serve
only to justify the previous computations. More precisely, one can 
introduce in the computation a cutoff outside a large enough ball
and then pass to the limit. 

It remains to prove \eqref{vregb} and \eqref{vout}.

{\bf Step 9:} Here we prove \eqref{vregb}. We begin with the 
bounds on $v$. This is trivial for the high frequencies of $v$,
\[
 \| S_{>0} v\|_{X^0_0} \lesssim \| v\|_{\tX^0}.
\]
To estimate the low frequencies, we compute
\[
(\tau+i\e)   S_{<0} v = S_{< 0} A v - S_{<0}(A-\tau-i\e)v.
\]
Writing $A$ in the generic form
\[
A =  D^2 a + D b + c, 
\]
we have
\[
\begin{split}
\| S_{<0} v\|_{X^0_{0}} \lesssim &\  \| S_{< 0} D^2 av\|_{X^0_0} +
\| S_{<0} bv \|_{X^0_0} + \| S_{<0} cv\|_{X^0_0} + \|S_{<0} (A-\tau-i\e) v\|_{X^0_0}
\\
 \lesssim &\  \| av\|_{\dX^0} +
\|  bv \|_{X^0_0} + \| cv\|_{X^0_0} + \|(A-\tau-i\e)v\|_{(\tX^0)'}
\\  \lesssim &\  \| v\|_{\dX^0} +
\|  \la x\ra^{-1} v \|_{L^2} + \|(A-\tau-i\e)v\|_{(\tX^0)'}.
\end{split}
\]
Once we control $\| v\|_{X^0_0}$, we can also obtain control of $\| \nabla
v\|_{X^0_0}$ by a straightforward elliptic estimate.

{\bf Step 10:} Here we prove the outgoing radiation condition
\eqref{vout} for $v$. This is obtained from similar outgoing radiation
conditions for the functions $v_n$. However, $v_n$ only converges to
$v$ in a weak sense. Hence we need to produce some uniform estimates
for $v_n$ which will survive in the limit.
\begin{multline}\label{out}
\| r^{-\frac12} (D_r - \tau^\frac12) u\|_{L^2(D_j)}^2 \\\lesssim   
\sum_{k=0}^\infty 
2^{-\delta(k-j)^-} 
\left( \|\la r\ra^\frac12 (A-\tau-i\e) u\|_{L^2(D_k)}  \| \la r\ra^{-\frac12}
  (u,\nabla u)\|_{L^2(D_k)} \right. \\  \left. + 
\kappa_k \| r^{-\frac12} (u,\nabla u)\|_{L^2(D_k)}^2 \right).
\end{multline}
In other words, there is decay when $k<j$. Applying to $v_n$, 
in the weak limit we obtain 
\[
\| r^{-\frac12} (D_r - \tau^\frac12) v \|_{L^2(D_j)}^2 \lesssim   
\sum_{k=0}^\infty 2^{-\delta(k-j)^-} \kappa_k 
\]
which implies \eqref{vout}.

The lower order terms in $A$ can be treated perturbatively 
in \eqref{out}. I.e. they can be included in the right hand side.
Hence without any restriction in generality we assume that
\[
A = D_i a^{ij} D_j.
\]
We use again a positive commutator method.  The multiplier is
the self-adjoint operator
 \[
 Q=b(R)\Bigl(\frac{x_i a^{ij}}{R}D_j - \tau^{\frac{1}{2}}\Bigr) + \Bigl(
 D_j \frac{ a^{ij} x_i}{R} - \tau^{\frac{1}{2}}\Bigr)b(R), \qquad
 R^2=x_i a^{ij}x_j
\]
where the coefficient $b(R)$ is smooth, increasing and satisfies
\[ 
b(R)\approx  \left\{
\begin{array}{lc}
  1 & R>2^{j+2} \cr
  (2^{-j}R)^\delta,    & 1 <  R< 2^{j+2}
\end{array} \right.
\]
with $\delta$ a small parameter.
We write
\begin{equation}\label{commute.eqn}
-2\Im \la Qu, (A-\tau-i\varepsilon)u\ra = \la i [A,Q]u,u\ra - 2\varepsilon \la Qu,u\ra.
\end{equation}
We expect to get the main positive contribution from the first term on
the right. The second term on the right on the other hand is
essentially negative definite due to the fact that its symbol is
negative on the characteristic set of $A - \tau$. Finally, the term on
the left is bounded simply by Cauchy-Schwarz.

To shorten the notations, in the sequel we denote by $E$ error terms
of the form
\[
E = D O( b(R) r^{-1} \kappa(|x|)) D + O(b(R) r^{-1} \kappa(|x|)).
\]
Such terms occur whenever $a^{ij}$ is either differentiated or
replaced by the identity and are easily estimated in terms of the 
right hand side of \eqref{out}.

We evaluate the commutator $ i [A,Q]$.  A similar  computation
was already carried out in \eqref{C.high.calc}, which we reuse with
$k = 0$, $\delta = 1$ and $\phi(r) = b(R)/R$. We obtain
\[
\begin{split}
 i [A,Q]= &\  4 D \frac{b(R)}{R} D + 4 D  x   \left( \frac{b'(R)}{R^2} -
 \frac{b(R)}{R^3}\right) xD - 2\tau^\frac12 \left(\frac{b'(R)}{R} x D + D x \frac{b'(R)}{R}\right)  +E
\\
= &\  2D \left( 2 \frac{b(R)}{R} - b'(R)\right) D - 2 Dx  \left(
    2 \frac{b(R)}{R^3} - \frac{b'(R)}{R^2}\right) xD  
\\&\qquad
+ 
  {b'(R)} (A -\tau) +  (A -\tau) {b'(R)}
+ 2 \left( Dx - \tau^\frac12 r\right ) \frac{b'(R)}{r R}  \left(
   xD -   r \tau^\frac12 \right ) + E.
\end{split}
\]
Our choice of $b$ insures that the coefficient in the first two terms
is positive,
\[
2 \frac{b(R)}{R} - {b'(R)} \geq 0 \qquad R > 1.
\]
Hence we obtain
\[
\la  i [A,Q] u, u\ra \gtrsim 2 \la b'(R) (D_r - \tau^\frac12) u, (D_r -
\tau^\frac12) u\ra + 2 \Re \la (A -\tau-i\epsilon) u,  b'(R) u \ra +
\la Eu,u\ra
\]
where we have inserted a harmless $\epsilon$ term.

It remains to evaluate the second term on the right in
\eqref{commute.eqn}. We have 
\[
\begin{split}
\tau^\frac12 Q = &\ - \Bigl(D_k \frac{x_l a^{kl}}{R}-\tau^{1/2}\Bigr)
b(R) \Bigl(\frac{x_i a^{ij}}{R}D_j -\tau^{1/2}\Bigr)
   + \frac{b(R)}2 (A-\tau) +  (A-\tau) \frac{b(R)}2
\\
&\ - \Bigl(D_i -D_l\frac{a^{lk}x_k x_i}{R^2}\Bigr) a^{ij} b(R)
\Bigl(D_j -\frac{x_j x_m a^{mn}}{R^2}D_n\Bigr) - \frac12 (A b(R)).
\end{split}
\]
The first and third terms are negative while the last term can be
included in $E$. Hence we obtain
\[
\tau^{\frac12} \la Q u,u\ra \leq \Re \la b(R) u, (A-\tau-i\e) u \ra +
\la Eu,u\ra .
\]

Returning to \eqref{commute.eqn}, we insert the bounds 
for the two terms on the right to obtain
\[
\la b'(R) (D_r - \tau^\frac12) u, (D_r -\tau^\frac12) u\ra 
\lesssim  \Re \la (A -\tau-i\epsilon) u,  (2 b'(R)+\e \tau^{-\frac12} b(R)
+i Q)
u \ra + \la Eu,u \ra.
\]
In the region $D_j$, we have $b' \approx 2^{-j} \approx r^{-1}$;
therefore \eqref{out} follows.

\subsection{Proof of Theorem~\ref{theorem.PcSmoothing.res}}

We proceed as in the nonresonant case. The bound \eqref{veps}
is replaced by
\begin{equation}
\|u_\varepsilon\|_\dX\lesssim
\|f\|_{\dX'}+\|u_\varepsilon-u_{\varepsilon \rho}
\|_{L^2_{t,x}(\R\times B(0,2R))}.
\label{veps.res}\end{equation}
Using Plancherel as in Step 3,  this is equivalent to the spatial
bound
\begin{equation}\label{have.res}
\|v  \|_{\dX^0}\lesssim
\| (A-\tau-i\varepsilon)v \|_{(\dX^0)'}+\| v-v_\rho  \|_{L^2(B(0,2R))}
\end{equation}
where $\dX^0$ is the fixed time counterpart of $\dX$.  On the other
hand the estimate that we want to prove, namely
\eqref{PcSmoothing.res} with $u_0=0$, has the
equivalent form
\begin{equation}\label{want.res}
  \|v  \|_{\dX^0}\lesssim \|  (A-\tau-i\e) v \|_{(\dX^0)'}
\end{equation}
uniformly with respect to $\tau \in \R$, $\e > 0$.

For $\tau$ away from $0$ we can easily bound the local average
of $v$. We have
\[
(\tau+i\epsilon) v_\rho = (Av)_\rho - ((A-\tau-i\varepsilon)v )_\rho.
\]
Therefore, by Cauchy-Schwarz,
\[
\tau |v_\rho| \lesssim \| (A-\tau-i\varepsilon)v \|_{(\dX^0)'} + \|
v \|_{L^2(B(0,2R))}.
\]
Hence we are able to bound $v$ in $\tX^0$ as well,
\begin{equation}\label{have.resa}
\|v  \|_{\tX^0}\lesssim
\| (A-\tau-i\varepsilon)v \|_{(\dX^0)'}+\| v  \|_{L^2(B(0,2R))},
\qquad |\tau| > \tau_0.
\end{equation}
Consequently, the argument for large $\tau$ rests unchanged.

Consider now the proof by contradiction. 

In the case $\tau < 0$,
we use the bound \eqref{bnoc} instead of \eqref{bc} for the 
lower order terms and show that $v$ is an eigenvalue.
However, by the maximum principle, there can be no 
negative eigenvalue for $A$.

The case $\tau = 0$ is the interesting one.  Then $v$ satisfies
\[
v \in \dX, \qquad A v = 0, \qquad \| v-v_\rho\|_{L^2(B(0,2R))}=1. 
\]
Hence $v$ is a zero generalized eigenvalue; therefore it must be
constant. But this contradicts the last relation.

Finally, due to \eqref{have.resa}, the case $\tau > 0$ is identical to
the nonresonant case. 

\subsection{ Proof of Remark~\ref{remark.nootherres}}

If $Av = 0$ then from
\[
0 = \la A(v-v_{D_j}),\chi_{<j} (v-v_{D_j}) \ra
\]
and integration by parts, we obtain
\[
\int_{D_{<j}}  |\nabla v|^2 \:dx \lesssim \int_{D_j} |x|^{-2} |v-v_{D_j}|^2 dx.
\]
The right hand side is square summable with respect to $j$;
therefore it decays as $j \to \infty$. We conclude that $\nabla v =
0$, and therefore $v$ is constant.


  \newsection{Strichartz estimates}

  In this section we combine the smoothing estimates of the preceding
  sections with the long-time parametrix construction of \cite{T} to
  obtain the Strichartz estimates of Theorems \ref{main.est.theorem},
  \ref{main.est.theorem.res}, \ref{main.theoremnt},
  \ref{main.theoremnt.res}, \ref{corr.nontrap.Strichartz},
  \ref{corr.nontrap.Strichartz.res}.  We begin by recalling the
  relevant results of \cite{T}. A first result asserts that full
  Strichartz/local smoothing estimates hold under a smallness
  assumptions on $\kappa$ in \eqref{coeff}.

\begin{theorem}[\cite{T}]
  \label{prop.Tataru.parametrix}
  Assume that the coefficients $a^{ij}$ satisfy \eqref{coeff} with
  $\kappa$ sufficiently small and $b=0$, $c=0$.  Then for any
  Strichartz pairs $(p_1,q_1)$, $(p_2,q_2)$, the solution $u$ to
  \eqref{main.equation} satisfies
\begin{equation}
\|u\|_{L^{p_1}_t L^{q_1}_x\cap \dX}\lesssim 
\|u_0\|_{L^2}+
\|f\|_{L^{p_2'}_tL^{q_2'}_x+\dX'}.
\end{equation}
\end
{theorem}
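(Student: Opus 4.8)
The plan is to derive this from two ingredients: the local smoothing estimate \eqref{lsesmall}, which is already available for small $\kappa$, together with a global-in-time outgoing parametrix for the evolution $e^{-itA}$. Since the coefficients $a^{ij}$ form a small, $C^2$, asymptotically flat perturbation of the Euclidean metric, the Hamilton flow of $a(x,\xi)=a^{ij}(x)\xi_i\xi_j$ is a small perturbation of the free flow $(x,\xi)\mapsto(x+2t\xi,\xi)$, and solutions stay close to free ones microlocally. First I would work one Littlewood--Paley block at a time: for the frequency-$2^k$ piece of the data I would apply the FBI (wave packet) transform at the scale adapted to $2^k$, transport each resulting coherent state along a bicharacteristic of the mollified symbol, and superpose. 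This produces an operator $K$ with $K(0)=I$ at the level of initial data, built from wave packets each of which disperses essentially like a free Schr\"odinger packet.

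Next I would establish the mapping properties of $K$. A $TT^*$/almost-orthogonality argument reduces the Strichartz bounds for $K$ to the disjointness and free dispersion of the individual packets, hence to the flat Strichartz inequality (away from the $q=\infty$ endpoint, consistent with the standing convention and \cite{KeelTao}); this gives $\|Ku_0\|_{L^p_tL^q_x}\lesssim\|u_0\|_{L^2}$ for every Strichartz pair, together with $\|Ku_0\|_{L^\infty_tL^2_x}+\|Ku_0\|_{\dX}\lesssim\|u_0\|_{L^2}$. The $\dX$ bound is the place where the weighted, frequency-localized structure of the local smoothing norm is matched against the transversality of the wave packets to the spatial spheres. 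The inhomogeneous version, $\|K(f)\|_{L^{p_1}_tL^{q_1}_x\cap\dX\cap L^\infty_tL^2_x}\lesssim\|f\|_{L^{p_2'}_tL^{q_2'}_x}$, then follows from Duhamel's formula and the Christ--Kiselev lemma, while an $\dX'$-valued forcing is fed directly into \eqref{lsesmall}.

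It remains to close the argument using the parametrix error $E=PK-I$, and this error estimate is the step I expect to be the main obstacle. One needs $\|Ef\|_{\dX'+L^1_tL^2_x}\lesssim C\kappa\,\|f\|_{L^2}$, with a constant proportional to the smallness parameter. Applying $D_t+A$ to a transported packet yields either terms that vanish to high order on the packet core (since the packet solves the equation for the mollified symbol) or terms carrying a factor of $\partial_x^2 a$, $\partial_x a$, or $a-I_n$, which by \eqref{coeff} is summably small over the dyadic shells $A_j$; tracking the frequency and spatial localization places these remainders in $\dX'$ with the advertised small norm. Given this, $I+E$ is invertible on the relevant function space for $\kappa$ sufficiently small, so writing the exact solution of \eqref{main.equation} as $u=K(u_0,f)+$(correction) and inverting $I+E$, while absorbing the $\dX'$-valued errors via \eqref{lsesmall}, produces the stated estimate. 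Everything except the error bound is routine once the parametrix is available; the parametrix construction and the error estimate are precisely the technical heart of \cite{T}, to which we refer for the details.
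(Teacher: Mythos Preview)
Your outline is correct and matches the actual approach: the paper does not reprove this result at all but simply quotes it from \cite{T}, and the mechanism you describe---an FBI/wave-packet parametrix at each dyadic frequency, Strichartz and $\dX$ bounds for the parametrix via almost orthogonality and transversality, and an error landing in $\dX'$ to be absorbed by the local smoothing estimate \eqref{lsesmall}---is exactly the content of \cite{T}. Your closing sentence, deferring the technical core to \cite{T}, is therefore precisely what the present paper does as well.
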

For large $\kappa$, which is the case we are interested in here,
it is shown that

\begin{theorem}[\cite{T}]
  \label{prop.Tataru.parametrix.2}
  Assume that   the coefficients  $a^{ij}$  satisfy \eqref{coeff}  and
  $b=0$, $c=0$.  Then there is a parametrix $K=\sum_{k} K_k S_k$  for $D_t+A$ with 
each $K_k$ localized at frequency $2^k$ so that
the
  following properties hold:
\begin{enumerate}
  \item[(i)] For any Strichartz pairs $(p_1,q_1)$ and $(p_2, q_2)$  we have
    \begin{equation}
      \label{kkf}
\|K_k S_k f\|_{L^{p_1}_tL^{q_1}_x\cap X_k}\lesssim \|S_k f\|_{L^{p_2'}_tL^{q_2'}_x}
    \end{equation}
and
\begin{equation}\label{kf}
\|Kf\|_{L^{p_1}_tL^{q_1}_x\cap \dX}\lesssim \|f\|_{L^{p_2'}_tL^{q_2'}_x}.\end{equation}
\item[(ii)] For any Strichartz pair $(p,q)$, we have
  \begin{equation}
    \label{lperror2b}
   \|((D_t+A)K-I)f\|_{\dX'}\lesssim \|f\|_{L^{p'}_tL^{q'}_x}.
  \end{equation}
\end{enumerate}
\end{theorem}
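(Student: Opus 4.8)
The plan is to recall and adapt the outgoing parametrix construction of \cite{T}, which is carried out frequency by frequency. By the Littlewood--Paley decomposition it suffices to produce, for each $k \in \Z$, an operator $K_k$ which inverts $D_t + A$ on data frequency-localized at $2^k$ modulo an error that, after summation in $k$, is acceptable in $\dX'$. As a preliminary reduction one replaces $A$ by a strongly frequency-truncated version $A_{(k)}$ in which the coefficients are localized at frequency $\ll 2^{k/2}\la x\ra^{-1/2}$ (a finer scale than the paradifferential one of Section~\ref{not_para}, since the parametrix is sensitive to the Hamilton flow); the discrepancy between $A$ and $\tilde{A} = \sum_k A_{(k)} S_k$ is then an $\dX \to \dX'$ error by an analogue of \eqref{amta} in Proposition~\ref{lemma.A.to.Ak}, and the localization commutators $[A_{(k)},S_k]$ are handled by \eqref{comak}. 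Thus it is enough to invert each $D_t + A_{(k)}$ on frequency-$2^k$ data with an error gaining a power of $2^{-k}\la x\ra^{-1}$ in the relevant dyadic norm.

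For fixed $k$, rescaling $x \mapsto 2^k x$, $t \mapsto 2^{2k} t$ turns the problem into a semiclassical one with parameter $h = 2^{-k}$, and the flatness hypothesis \eqref{coeff} becomes a long-range decay condition on the rescaled metric. The device is the FBI (wave packet) transform: one decomposes the frequency-$1$ data into a continuous superposition of Gaussian coherent states $\Psi_{x_0,\xi_0}$ over phase space, transports each $\Psi_{x_0,\xi_0}$ along the Hamilton flow of the truncated principal symbol $a^{ij}_{(k)}\xi_i\xi_j$, and superposes the outputs to define $K_k$. Along each trajectory the transported packet is, in adapted coordinates, an approximate solution of the flat Schr\"odinger equation; hence the fixed-frequency bound \eqref{kkf} reduces to the flat dispersive estimate together with a $TT^*$ argument and the Keel--Tao interpolation, and Littlewood--Paley theory combined with summation of the square function over $k$ yields \eqref{kf}. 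The $X_k$ (resp.\ $\dX$) component of these bounds reflects the non-concentration of the Hamilton flow along incoming and outgoing rays, which is precisely the mechanism encoded in the local smoothing norms.

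The heart of the matter --- and the step I expect to be the main obstacle --- is the error estimate \eqref{lperror2b}. Expanding the wave-packet ansatz in $(D_t + A_{(k)})K_k - S_k$, one identifies three sources of error: the curvature of the Hamilton flow (the packet solves the equation only to second order), the mismatch between $a^{ij}_{(k)}$ and the even more strongly truncated symbol actually used to drive the flow (which is why the finer localization scale $\ll 2^{k/2}\la x\ra^{-1/2}$ is forced), and the spreading of the packet over the relevant time interval. Each contribution must be shown to gain a factor $2^{-k}\la x\ra^{-1}$ relative to the input, so that after testing against a function in $X_k$ and exploiting the Cauchy--Schwarz structure of the $\dX'/\dX$ duality the total is square-summable in $k$ and dominated by $\|f\|_{L^{p'}_t L^{q'}_x}$; the dual Strichartz exponents enter through the same $TT^*$ duality already used in part (i). Adding back the contribution of $A - \tilde{A}$ via \eqref{amta} and the commutators via \eqref{comak} completes the argument.
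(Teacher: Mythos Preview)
This theorem is not proved in the paper at all: it is quoted verbatim from \cite{T}, and the paper's only comment is that the result carries over to the present setting with straightforward modifications (the non-scale-invariant form of \eqref{coeff} and the slightly altered definitions of $X_k$ and $A_{(k)}$). So there is no ``paper's own proof'' to compare against; the paper treats the parametrix as a black box imported from \cite{T} and uses only its mapping properties.

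Your sketch is a faithful high-level summary of the construction in \cite{T}: the frequency-by-frequency reduction to $D_t + A_{(k)}$ via Proposition~\ref{lemma.A.to.Ak}, the FBI/wave-packet representation, transport of coherent states along the Hamilton flow of the truncated symbol, and the threefold error accounting for \eqref{lperror2b}. As an outline this is correct, and the remark that the finer coefficient localization scale $|\xi| \ll 2^{k/2}\la x\ra^{-1/2}$ (rather than the paradifferential $|\xi| \ll 2^k$ used in Section~\ref{not_para}) is forced by the parametrix is exactly the point made in \cite{T}. That said, what you have written is a roadmap, not a proof: each of the three error terms requires a substantial phase-space argument (tracking packet spreading, curvature of the flow, and coefficient truncation uniformly over long times), and the $TT^*$/Keel--Tao step for \eqref{kkf} hides nontrivial almost-orthogonality estimates between transported packets. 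For the purposes of this paper, citing \cite{T} is the intended route; if you want to supply a self-contained proof, you would need to reproduce a significant portion of that paper.
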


As a consequence of this, it is also proved in \cite{T}
that
\begin{theorem}[\cite{T}]
  \label{prop.Tataru.xtolp}
  Assume that   the coefficients  $a^{ij}$  satisfy \eqref{coeff}  and
  $b=0$, $c=0$. Then for any Strichartz pair $(p,q)$, we have
\begin{equation}
\| u\|_{L^p_t L^q_x} \lesssim \|u\|_{\dX \cap L^\infty_t L^2_x} + \| Pu\|_{\dX'}. 
\end{equation}
\end{theorem}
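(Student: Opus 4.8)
The plan is to argue by duality against $L^{p'}_t L^{q'}_x$, using the backward analogue of the long-time parametrix $K$ from Theorem~\ref{prop.Tataru.parametrix.2}. It suffices to prove the bound for $u$ lying a priori in $L^p_t L^q_x$, the general case following by a standard density/limiting argument; then $\|u\|_{L^p_t L^q_x}=\sup|\la u,g\ra|$, the supremum taken over $g$ supported in $[0,T]\times\R^n$ with $\|g\|_{L^{p'}_t L^{q'}_x}\le 1$, so we only need to bound $|\la u,g\ra|$ by the right hand side of the asserted inequality for such $g$.

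First I would note that, since $b=0$, $c=0$ and the $a^{ij}$ are real, the time reversal $w(t,x)\mapsto\overline{w(-t,x)}$ conjugates $P$ to an operator of the same form which still satisfies \eqref{coeff}. Applying Theorem~\ref{prop.Tataru.parametrix.2} to this reversed operator produces a backward parametrix $\tilde K$ for $P$: for $v:=\tilde K g$ one has, by \eqref{kf} (together with the fact that the parametrix of \cite{T} also maps $L^{p'}_t L^{q'}_x$ boundedly into $L^\infty_t L^2_x$, being built from superpositions of exact model solutions) and by \eqref{lperror2b},
\[
\|v\|_{\dX\cap L^\infty_t L^2_x}\lesssim \|g\|_{L^{p'}_t L^{q'}_x},\qquad \|Pv-g\|_{\dX'}\lesssim \|g\|_{L^{p'}_t L^{q'}_x}.
\]
Write $r:=Pv-g\in\dX'$; restricting $v$ and $r$ from $\R$ to $[0,T]$ only decreases these norms.

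Next I would integrate by parts in $t$ on $[0,T]$. Since $b=0$, $c=0$, the spatial part $A$ is self-adjoint, so only the $D_t$ term contributes a boundary term, and
\[
\la u,g\ra=\la u,Pv\ra-\la u,r\ra=\la Pu,v\ra - i\la u(0),v(0)\ra + i\la u(T),v(T)\ra - \la u,r\ra.
\]
Estimating the four terms by the pairings $\dX'\times\dX$, $L^2\times L^2$, $L^2\times L^2$ and $\dX\times\dX'$ respectively, and using $\|v(0)\|_{L^2}+\|v(T)\|_{L^2}\lesssim\|v\|_{L^\infty_t L^2_x}\lesssim\|g\|_{L^{p'}_t L^{q'}_x}\le 1$ together with $\|u(0)\|_{L^2},\|u(T)\|_{L^2}\le\|u\|_{L^\infty_t L^2_x}$, I obtain
\[
|\la u,g\ra|\lesssim \|Pu\|_{\dX'}\|v\|_{\dX}+\|u\|_{L^\infty_t L^2_x}\|v\|_{L^\infty_t L^2_x}+\|u\|_{\dX}\|r\|_{\dX'}\lesssim \|u\|_{\dX\cap L^\infty_t L^2_x}+\|Pu\|_{\dX'}.
\]
Taking the supremum over admissible $g$ finishes the proof.

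The one delicate point — and the reason this estimate is conditional rather than a genuine a priori bound — is the error term $\la u,r\ra$: the parametrix inverts $P$ only modulo an error $r$ that is controlled in $\dX'$ but no better, and this error must be absorbed precisely by the hypothesis that $u\in\dX$. Everything else is routine: the boundary terms are harmless because the $L^\infty_t L^2_x$ norm appears on both sides of the claimed inequality, and the verification that $\tilde K$ enjoys the $L^\infty_t L^2_x$ mapping property is part of (or an immediate consequence of the construction of) the parametrix in \cite{T}.
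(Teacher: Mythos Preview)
Your proposal is correct and is precisely the duality argument the paper has in mind: the paper does not spell out a proof here but simply cites it from \cite{T}, noting (in the analogous nonresonant statement just below) that it follows from the parametrix of Theorem~\ref{prop.Tataru.parametrix.2} ``by the same duality argument as in \cite{T}.'' Your pairing of $u$ against $g\in L^{p'}_tL^{q'}_x$, application of a time-reversed parametrix to $g$, and integration by parts is exactly that argument; the $L^\infty_tL^2_x$ bound for the parametrix you invoke is already contained in \eqref{kf} since $(\infty,2)$ is a Strichartz pair.
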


These are slight modifications of the results in \cite{T} as our
assumption \eqref{coeff} is not scale invariant and as such we have
modified the definitions of $\tX_k$ and $A_{(k)}$ slightly.  Scale
invariance, however, was only assumed in \cite{T} as a convenience,
and the modifications that are necessary to adapt the proofs to the
current setting are straightforward.

The above results are suitable for the high dimension $n \geq
3$ and for the low dimensional resonant case. However, for the 
low dimensional nonresonant case, we need a modified 
formulation of the last two theorems.

\begin{theorem}
  \label{prop.Tataru.parametrix.nr}
  Assume that   the coefficients  $a^{ij}$  satisfy \eqref{coeff}  and
  $b=0$, $c=0$.  There there is a parametrix $K$  for $D_t+A$ with the
  following properties:
\begin{enumerate}
  \item[(i)] For any Strichartz pairs $(p_1,q_1)$ and $(p_2, q_2)$  we have
\begin{equation}\label{kft}
\|Kf\|_{L^{p_1}_tL^{q_1}_x\cap \tX}\lesssim \|f\|_{L^{p_2'}_tL^{q_2'}_x}.\end{equation}
\item[(ii)] For any Strichartz pair $(p,q)$,
  \begin{equation}
    \label{lperror2bt}
   \|((D_t+A)K-I)f\|_{\tX'}\lesssim \|f\|_{L^{p'}_tL^{q'}_x}.
  \end{equation}
\end{enumerate}
\end{theorem}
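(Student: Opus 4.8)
The plan is to derive Theorem~\ref{prop.Tataru.parametrix.nr} from Theorem~\ref{prop.Tataru.parametrix.2} by repairing the low-frequency portion of the parametrix so that it maps into $\tX$ rather than just $\dX$; in dimensions $n\ge 3$ the two spaces coincide and there is nothing to prove, so the content is entirely in $n=1,2$. The obstruction is exactly the one encountered in the proof of Theorem~\ref{main.ls.theorem} in the low dimensional nonresonant case: the parametrix pieces $K_k S_k$ for $k\le 0$ act on the spatial scale $2^{-k}$, and even though they are bounded into $X_k$ (hence into $\dX$ after Littlewood--Paley summation), one has no control over the local $L^2$ norm $\|K_{\le 0}S_{\le 0}f\|_{L^2_{t,x}(\{|x|\le 1\})}$, which by Lemma~\ref{txdx} is precisely what is needed to upgrade $\dX$ to $\tX$. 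Accordingly the new parametrix should be built as $\tilde K = \sum_{k>0}K_kS_k + \tilde K_{\le 0}$, where $\tilde K_{\le 0}$ is a modification of $\sum_{k\le 0}K_kS_k$ that retains all the $\dX$, $L^p_tL^q_x$ and error estimates but in addition gains the missing local square integrability.

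The key steps are as follows. First I would pass, as in Section~\ref{embxs} and in the proof of Theorem~\ref{main.ls.theorem} for $n=1,2$, from the discrete low-frequency sum to a continuous one $\int_{-\infty}^0 K_\sigma S_\sigma\,d\sigma$; all the estimates of Theorem~\ref{prop.Tataru.parametrix.2} transfer verbatim. Second, exploiting that the kernels of $K_\sigma S_\sigma$ are, up to the natural parabolic rescaling, copies of a single kernel, I would split off the value at $x=0$: writing $K_\sigma S_\sigma f = T_\sigma(K_\sigma S_\sigma f)(t,0) + (1-T_\sigma)K_\sigma S_\sigma f$ with $T_\sigma v = v(t,0)\psi_\sigma(x)$ as in Section~\ref{embxs}, the second piece vanishes at the origin and, by the improved pointwise and $L^2$ bounds \eqref{impl2}, contributes an integrable quantity whose local $L^2$ norm near $|x|\le 1$ is controlled by $\|f\|_{L^{p'_2}_tL^{q'_2}_x}$. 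The first piece is handled by the same device used in the proof of Proposition~\ref{Qprop} in the low dimensional nonresonant case: the scalar quantity $\int_{-\infty}^0 (K_\sigma S_\sigma f)(t,0)\,d\sigma$ is, by the self-similarity of the kernels, of the form $\langle K^1_{<0},f\rangle$ for a fixed Schwartz kernel, so it is bounded by $\|f\|_{L^{p'_2}_tL^{q'_2}_x}$; multiplying by $\psi^0(x)$ (a fixed frequency-$\ll1$ bump with $\psi^0(0)=1$) and adding a compensating $x^2$-weighted correction built from $\psi_\sigma = (\phi_\sigma-\phi^0)/x^2$, exactly as in the construction of $\tilde Q_\sigma$ in \eqref{qlowd}, produces the modified operator $\tilde K_{\le 0}$. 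Third I would check that the correction terms $x^2$-localized at scale $2^{-\sigma}$ neither spoil the $\dX$ and $L^p_tL^q_x$ bounds \eqref{kf}, \eqref{kft} (they are $O(1)$ perturbations in the relevant norms, exactly as the difference $\tilde Q_\sigma - S_\sigma Q_\sigma S_\sigma$ was shown to be negligible) nor the error estimate \eqref{lperror2bt}: since $(D_t+A)$ applied to the correction is again a symbol of the type analyzed in Section~\ref{smoothing.section}, the new error lies in $\tX'$ with the required bound.

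The main obstacle is verifying \eqref{lperror2bt}, the error estimate in the $\tX'$ norm, for the modified low-frequency parametrix. In $n=1,2$ the dual space $\tX'$ is strictly smaller than $\dX'$ (it requires the extra local $\langle x\rangle L^2_{t,x}$ decomposability), so one cannot simply quote \eqref{lperror2b}; instead one must show that $((D_t+A)\tilde K_{\le 0}-I)f$, which differs from $((D_t+A)K_{\le 0}-I)f$ by terms supported essentially on $\{|x|\lesssim 1\}$ coming from $\psi^0$ and the $x^2$-corrections, actually lands in $\langle x\rangle L^2_{t,x}\subset \tX'$ with the Strichartz bound. This is where one uses that the modification was chosen to preserve the integral of $f$ (so that the mismatch against the identity is genuinely lower order near the origin), together with the $\dX \to \dX$ mapping property of $(D_t+A)K-I$ from Theorem~\ref{prop.Tataru.parametrix.2} and Lemma~\ref{txdx}. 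Once \eqref{kft} and \eqref{lperror2bt} are in hand, the theorem follows, and it will feed directly into the proof of Theorem~\ref{main.est.theorem} in the low dimensional nonresonant case in the same way that Theorem~\ref{prop.Tataru.parametrix.2} does in the other cases.
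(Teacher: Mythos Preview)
Your reduction to $n=1,2$ and the first move---subtracting the origin value via the operators $T_k$ of Section~\ref{embxs}, so that $(1-T)Kf$ lies in $\tX$ by \eqref{l2uout}---is exactly what the paper does: it replaces $K$ by $(1-T)K+R$ with $Tu=u^{in}$. The gap is in your second step. You assert that the kernels of $K_\sigma S_\sigma$ are, up to parabolic rescaling, copies of a single kernel, and deduce that $\int_{-\infty}^0(K_\sigma S_\sigma f)(t,0)\,d\sigma$ pairs $f$ against a fixed Schwartz kernel. This self-similarity is not available: $K_\sigma$ is the parametrix from \cite{T} for the variable-coefficient operator $D_t+A_{(\sigma)}$, and the coefficients $a^{ij}(x)$ break the scaling. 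The trick worked for $\tilde Q_\sigma$ only because the first piece $S_\sigma Q_{\sigma,\phi^0}S_\sigma$ was built from a \emph{fixed} spatial profile $\phi^0$, so that its kernel at the origin was an exact rescaling; there is no analogous decomposition of $K_\sigma$ into a $\sigma$-independent part. Consequently your proposed $\tilde K_{\le 0}$ is not well-defined, and the plan for the error bound \eqref{lperror2bt} collapses with it.

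The paper avoids this by not attempting to rebuild $K$ at all. Instead it introduces an explicit correction $R=\sum_{k\le 0}R_kS_k$ built from the \emph{time} derivative: $R_kf$ samples $(S_kf)(t,0)$, applies $D_t^{-1}$ with a time-frequency cutoff $S^t_{>2j}$ matched to each spatial scale $2^{-j}$, and redistributes via the bumps $\phi_j$. The correction is engineered so that $T-D_tR$ maps $L^{p'}_tL^{q'}_x$ into $\langle x\rangle L^2_{t,x}\subset\tX'$ (this is the key estimate \eqref{tmdtr}), while $AR$ and $AT$, $TA$ are bounded $L^{p'}_tL^{q'}_x\to\dX'$ and $\dX\to\dX'$ respectively (estimates \eqref{arf}, \eqref{atu}); these combine to give \eqref{lperror2bt}. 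No scaling or structural property of the parametrix is used beyond the dyadic bounds \eqref{kkf}.
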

As a consequence of this, by the  same duality argument as in \cite{T},
we obtain
\begin{theorem}
  \label{prop.Tataru.xtolp.nr}
  Assume that   the coefficients  $a^{ij}$  satisfy \eqref{coeff}  and
  $b=0$, $c=0$. Then for any Strichartz pair $(p,q)$, we have
\begin{equation}
\| u\|_{L^p_t L^q_x} \lesssim \|u\|_{\tX\cap L^\infty_tL^2_x} + \| Pu\|_{\tX'}. 
\end{equation}
\end{theorem}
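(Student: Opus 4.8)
The plan is to deduce this estimate from the parametrix of Theorem~\ref{prop.Tataru.parametrix.nr} by the same duality argument used in \cite{T} to pass from Theorem~\ref{prop.Tataru.parametrix.2} to Theorem~\ref{prop.Tataru.xtolp}, now carried out with $\tX$ in place of $\dX$. Fix a Strichartz pair $(p,q)$. Working with smooth compactly supported approximants and passing to the limit at the end, it suffices to bound $|\la u,g\ra|$ over $[0,T]\times\R^n$ for $g\in C_c^\infty$ with $\|g\|_{L^{p'}_tL^{q'}_x}\le 1$, and then take the supremum over all such $g$.

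The mechanism is to test against $w:=Kg$, the parametrix applied to $g$ regarded as an inhomogeneous term and then restricted to $[0,T]$. Part (i) of Theorem~\ref{prop.Tataru.parametrix.nr}, applied once with $(p_1,q_1)$ arbitrary and once with $(p_1,q_1)=(\infty,2)$ (always an admissible Strichartz pair), gives
\[
\|w\|_{\tX}+\|w\|_{L^\infty_tL^2_x}\lesssim \|g\|_{L^{p'}_tL^{q'}_x}\le 1,
\]
while part (ii) gives $(D_t+A)w=g+Eg$ with $E=(D_t+A)K-I$ and $\|Eg\|_{\tX'}\lesssim 1$. Then I would write
\[
\la u,g\ra=\la u,(D_t+A)w\ra-\la u,Eg\ra
\]
and integrate by parts in the first pairing; since $b=c=0$ and the $a^{ij}$ are real, $A=D_ia^{ij}D_j$ is formally self-adjoint (as is $D_t$), so this pairing equals $\la Pu,w\ra$ plus temporal boundary contributions at $t=0$ and $t=T$ of the form $\la u(0),w(0)\ra$ and $\la u(T),w(T)\ra$. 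Estimating the two interior pairings by the $\tX$--$\tX'$ duality, the boundary terms by Cauchy--Schwarz in $L^2_x$, and inserting the bounds on $w$ and $Eg$ yields $|\la u,g\ra|\lesssim \|Pu\|_{\tX'}+\|u\|_{\tX\cap L^\infty_tL^2_x}$, and taking the supremum over $g$ gives the assertion.

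I do not expect a genuine analytic obstacle here: the whole analytic content is packaged in Theorem~\ref{prop.Tataru.parametrix.nr}, and the rest is bookkeeping. The points that require a little attention are that $(\infty,2)$ must be allowed in part (i) so that $w$ is controlled in the energy norm --- this is precisely what puts the $L^\infty_tL^2_x$ norm of $u$ on the right-hand side, through the boundary terms of the integration by parts; that restricting the global-in-time parametrix $w=Kg$ to $[0,T]$ is harmless; and that all three bilinear pairings are legitimate, the $\tX$--$\tX'$ ones by definition of the dual norm and the boundary ones since $w\in L^\infty_tL^2_x$. Finally, the integration by parts should be justified on the smooth approximants before passing to the limit, which is why the estimate is stated for $u\in\tX\cap L^\infty_tL^2_x$ rather than for a priori less regular $u$.
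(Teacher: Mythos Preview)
Your proposal is correct and is precisely the approach the paper indicates: the paper states this theorem as a consequence of Theorem~\ref{prop.Tataru.parametrix.nr} ``by the same duality argument as in \cite{T}'', and what you have sketched is exactly that duality argument, with $\tX$ replacing $\dX$. The points you flag---that $(\infty,2)$ is an admissible Strichartz pair so that $w=Kg$ is controlled in $L^\infty_tL^2_x$, and that the boundary terms from integrating $D_t$ by parts are handled by Cauchy--Schwarz---are the only places needing care, and you have identified them correctly.
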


\begin{proof}[Proof of Theorem~\ref{prop.Tataru.parametrix.nr}]
  The conclusion of the theorem follows by replacing the parametrix
  $K$ with $(1-T)K+R$, where $T$ and $R$ are linear operators which
  are translation invariant in $t$ and have the following properties:
\begin{equation}
\|(1- T) u\|_{\tX} \lesssim \|u\|_{\dX},
\label{umtu}\end{equation}
\begin{equation}
  \label{lplqtu}
\|(1-T)Kf\|_{L^{p_1}_tL^{q_1}_x}\lesssim \|f\|_{L^{p_2'}_tL^{q_2'}_x},
\end{equation}
\begin{equation}
\|A R f\|_{\dX'}   + \|R f\|_{\tX\cap L^{p_1}_tL^{q_1}_x} \lesssim \|f\|_{L^{p_2'}_tL^{q_2'}_x},
\label{arf}\end{equation}
\begin{equation}
\| AT u\|_{\dX'}+ \| TA u\|_{\dX'}  \lesssim \| u\|_{\dX}, 
\label{atu}\end{equation}
\begin{equation}
\|(T-D_t R)f\|_{\tX'} \lesssim \| f\|_{L^{p'}_tL^{q'}_x}.
\label{tmdtr}\end{equation}

We seek $T$, $R$  of the form
\[
Tu = \sum_{k = -\infty}^0 T_k S_k u, \qquad 
Rf = \sum_{k = -\infty}^0 R_k S_k f
\]
where the operators $T_k$, $R_k$ are localized at frequency $2^k$,
respectively $\geq 2^k$
and are defined by 
\[
T_k  = u(t,0) \phi_k, \qquad R_k f = \phi_0(x)D_t^{-1} S^t_{>0}
f(t,0)  -\sum_{j=k}^{-1} (\phi_{j+1}(x)-\phi_j(x))D_t^{-1} S^t_{>2j} f(t,0) 
\]
with $\phi_k(x) = \phi(2^{k} x)$ and
\[
\phi(0) = 1, \qquad \supp \hat{\phi} \subset \{|\xi| \in [1/2,2]\}.
\]

Notice that $Tu=u^{in}$ with $u^{in}$ as in Section \ref{embxs}.  As such, the bound
\eqref{umtu} follows directly from \eqref{Tkbdd} and \eqref{l2uout}.
The bound \eqref{lplqtu} follows similarly using a Bernstein bound, Littlewood-Paley theory, and
\eqref{kkf}.
For \eqref{atu} we use Proposition~\ref{lemma.A.to.Ak} to replace 
$A$ by $\sum A_{(k)} S_k$. Then we use the spatial localization
coming from $T$, \eqref{bernstein}, and the two derivatives gain from $ A_{(k)}$.

We consider now the $\dX$ bounds in \eqref{arf}.  For the second term in the left of \eqref{arf}, 
using  Bernstein's inequality twice  yields
\begin{align*}
\left\| (\phi_{j+1}(x)-\phi_j(x))D_t^{-1}S_{>2j}^t
(S_k f)(t,0)\right\|_{X_j}
&\lesssim 2^{\frac{2-n}2 j} 2^{2j(-1+\frac{1}{p_2'}-\frac{1}{2})}
\|S_k f(t,0)\|_{L^{p_2'}_t} \\
&\lesssim   2^{\frac{2-n}2j} 2^{2j(-1+\frac{1}{p_2'}-\frac{1}{2})}
2^{\frac{n}{q_2'}k} \|S_k f\|_{L^{p_2'}_tL^{q_2'}_x} \\
&= 
2^{\frac{n}{q_2'}(k-j)} \|S_k f\|_{L^{p_2'}_tL^{q_2'}_x}.
\end{align*}
The $j=0$ term in $R_k$ is estimated in a similar fashion.  Summing
with respect to $k \leq j \leq 0$ we use the off-diagonal decay to
obtain
\begin{align*}
\|Rf\|_X &\lesssim   \left(\sum_{j=-\infty}^0 \left(\sum_{k=-\infty}^j 
2^{\frac{n}{q_2'}(k-j)} \|S_k f\|_{L^{p_2'}_tL^{q_2'}_x}\right)^2\right)^\frac12\\
&\lesssim \left(\sum_{k=-\infty}^0 \|S_k f\|^2_{L^{p_2'}_tL^{q_2'}_x}\right)^\frac12.
\end{align*}
 The bound $X$ bound for the second term in the left of \eqref{arf} then follows from Littlewood-Paley theory.
The $L^{p_1}_tL^{q_1}_x$ estimate follows from similar applications of Bernstein estimates and
Littlewood-Paley theory.
  
For the first term in the left of \eqref{arf}, we may apply Proposition \ref{lemma.A.to.Ak} to again
replace $A$ by $\sum A_{(k)}S_k$.  As the derivatives in $A_{(k)}$ yield a $2^{2k}$ factor, the estimate
for the first term in \eqref{arf}  follows from a very similar argument.

In order to complete the proof of \eqref{arf}, we examine the $L^2$
part of the $\tX$ norm.  We may first apply \eqref{Hardy} and
\eqref{txdx} to reduce the problem to the bound
\[
\|\sum_{k<0} R_k S_k f\|_{L^2_{t,x}(\{|x|\le 1\})}\lesssim
\|f\|_{L^{p_2'}_tL^{q_2'}_x}
\]
in dimensions $n=1,2$.
Here we use the fact that
$\phi_{j+1}(0)-\phi_j(0)=0$.  
Using this gain in a fashion similar to that from Section \ref{embxs},
we have 
\[
\|\phi_{j+1}-\phi_j\|_{L^2(\{|x|\le 1\})}\lesssim 2^j.
\]
Thus, arguing as above,
\begin{align*}
\|R_k S_k f\|_{L^2(\{|x|\le 1\})}&\lesssim \sum_{j\ge k} 2^j 2^{2j(-1+\frac{1}{p_2'}-\frac{1}{2})}
2^{\frac{n}{q_2'}k}\|S_k f\|_{L^{p_2'}_tL^{q_2'}_x}\\
&\lesssim 2^{\frac{n}{2}k}\|S_k f\|_{L^{p_2'}_t L^{q_2'}_x}.
\end{align*}
This can clearly be summed to yield the desired bound.

It remains to prove \eqref{tmdtr}. For this we will show the bound
\begin{equation}
\|\la x \ra (T-D_t R)f\|_{L^2} \lesssim \| f\|_{L^{p'}_tL^{q'}_x}.
\label{fdsa}\end{equation}
We have 
\[
(T-D_t R)f = - \sum_{k < 0} \left(\phi_0S^t_{\le 0} (S_k f)(t,0) 
+\sum_{j= k}^{-1} (\phi_{j+1}-\phi_j)S^t_{\le 2j} (S_k f)(t,0)\right).
\]
Arguing as above we obtain
\[
 \| (\phi_{j+1}-\phi_j)S^t_{\le 2j} (S_k f)(t,0)\|_{L^2} 
\lesssim 2^j 2^{\frac{n}{q_2'}(k-j)} \|S_k f\|_{L^{p_2'}_tL^{q_2'}_x}
\]
respectively 
\[
\| x (\phi_{j+1}-\phi_j)S^t_{\le 2j} (S_k f)(t,0)\|_{L^2}
\lesssim 2^{\frac{n}{q_2'}(k-j)} \|S_k f\|_{L^{p_2'}_tL^{q_2'}_x}
\]
and similarly for the $j=0$ term. Then \eqref{fdsa} is obtained by
summation using the off-diagonal decay and Littlewood-Paley theory.
\end{proof}

Theorems~\ref{prop.Tataru.parametrix.nr}, \ref{prop.Tataru.xtolp.nr}
will allow us to derive Theorems \ref{main.est.theorem},
\ref{main.theoremnt}, \ref{corr.nontrap.Strichartz} from
Theorems~\ref{main.ls.theorem}, \ref{main.ls.theoremnt},
\ref{theorem.PcSmoothing}.  Similarly,
Theorems~\ref{prop.Tataru.parametrix}, \ref{prop.Tataru.xtolp}
will allow us to derive Theorems \ref{main.est.theorem.res},
\ref{main.theoremnt.res}, \ref{corr.nontrap.Strichartz.res} from
Theorems~\ref{main.ls.theorem.res}, \ref{main.ls.theoremnt.res},
\ref{theorem.PcSmoothing.res}.

\subsection{Proof of Theorems~\ref{main.theoremnt},
  \ref{corr.nontrap.Strichartz},~\ref{main.theoremnt.res},
  \ref{corr.nontrap.Strichartz.res}}

The four proofs are almost identical, so we discuss only the first
theorem.  Suppose the function $u$ solves
\[
Pu = f +g, \qquad f \in \tX', \quad g \in L^{p'_2}_t L^{q'_2}_x
\]
with initial data 
\[
u(0) = u_0.
\]
We let $K$ be the parametrix of
Theorem~\ref{prop.Tataru.parametrix.nr} and denote
\[
v = u - K g.
\]
Then
\[
Pv = f + g -PKg, \qquad v(0) = u(0) - Kg(0).
\]
Using the bounds \eqref{bc}, \eqref{kft}, and \eqref{lperror2bt}, we obtain
\[
\| v(0)\|_{L^2} + \| Pv\|_{\tX'} \lesssim \|u(0)\|_{L^2} + 
\| f\|_{\tX'} + \|g\|_{L^{p'_2}_t L^{q'_2}_x}.
\]
Then  Theorem~\ref{main.ls.theoremnt} gives
\[
\| v\|_{L^\infty_t L^2_x \cap \tX}  + \| Pv\|_{\tX'} \lesssim \|u(0)\|_{L^2} + 
\| f\|_{\tX'} + \|g\|_{L^{p'_2}_t L^{q'_2}_x} + \|v\|_{L^2_{t,x}(A_{<2R})}.
\]
Hence by \eqref{bc} and Theorem~\ref{prop.Tataru.xtolp.nr} it follows that
\[
\| v\|_{L^\infty_t L^2_x \cap \tX}+ \|v\|_{L^{p_1}_t L^{p_2}_x}
 \lesssim \|u(0)\|_{L^2} + 
\| f\|_{\tX'} + \|g\|_{L^{p'_2}_t L^{q'_2}_x} + \|v\|_{L^2_{t,x}(A_{<2R})}.
\]
Using again \eqref{lperror2bt} we return to $u$ to obtain
\[
\| u\|_{L^\infty_t L^2_x \cap \tX}+ \|u\|_{L^{p_1}_t L^{p_2}_x}
 \lesssim \|u(0)\|_{L^2} + 
\| f\|_{\tX'} + \|g\|_{L^{p'_2}_t L^{q'_2}_x} + \|u\|_{L^2_{t,x}(A_{<2R})}
\]
concluding the proof of the Theorem.

\subsection{Proof of Theorem~\ref{main.est.theorem}}
 Suppose the function $u$ solves
\[
Pu = f +\rho g, \qquad f \in \tX'_e, \quad g \in L^{p'_2}_t L^{q'_2}_x
\]
with initial data 
\[
u(0) = u_0.
\]
We consider two
additional spherically symmetric cutoff functions $\rho_1$ and
$\rho_2$ supported in $\{|x| > 2^M\}$ so that $\rho_2 =1$ in the
support of $\rho_1$ and $\rho_1 =1$ in the
support of $\rho$.

 Let $K$ be the parametrix of
Theorem~\ref{prop.Tataru.parametrix.nr} and denote
\[
v = u - \rho_1 K \rho g.
\]
Then
\[
Pv = f +\rho_2(\rho_1( \rho g - P K \rho g)  - [P,\rho_1]K\rho g)
, \qquad v(0) = u(0) - \rho_1 K \rho g(0).
\]
Using the bounds \eqref{bc}, \eqref{kft}, and \eqref{lperror2bt}, we obtain
\[
\| v(0)\|_{L^2} + \| Pv\|_{\tX'_{e2}} \lesssim \|u(0)\|_{L^2} + 
\| f\|_{\tX'_e} + \|g\|_{L^{p'_2}_t L^{q'_2}_x}
\]
where $\tX'_{e2}$ is similar to $\tX'_{e}$ but with $\rho$ replaced
by $\rho_2$. Then we can apply Theorem~\ref{main.ls.theorem}
to $v$ to obtain
\[
\| v\|_{L^\infty_t L^2_x \cap \tX_{e}}
 + \| Pv\|_{\tX'_{e2}} \lesssim \|u(0)\|_{L^2} + 
\| f\|_{\tX'_e} + \|g\|_{L^{p'_2}_t L^{q'_2}_x}+ \|v\|_{L^2_{t,x}(|x|\le 2^{M+1})}.
\]
We truncate $v$ with $\rho$ and compute
\[
P \rho v = [P,\rho]v + \rho P v .
\]
Then we can estimate
\[
\| v\|_{L^\infty_t L^2_x} + \|\rho v\|_{\tX}
 + \| P(\rho v)\|_{\tX'} \lesssim \|u(0)\|_{L^2} + 
\| f\|_{\tX'_e} + \|g\|_{L^{p'_2}_t L^{q'_2}_x} + \|v\|_{L^2_{t,x}(|x|\le 2^{M+1})}.
\]
Hence by \eqref{bc} and Theorem~\ref{prop.Tataru.xtolp.nr} applied to $\rho v$,
we obtain
\[
\|v\|_{L^\infty_t L^2_x} + \|\rho v\|_{\tX \cap L^{p_1}_t L^{q_1}_x}
 \lesssim \|u(0)\|_{L^2} + 
\| f\|_{\tX'_e} + \|g\|_{L^{p'_2}_t L^{q'_2}_x}+ \|v\|_{L^2_{t,x}(|x|\le 2^{M+1})}.
\]
Finally, we use \eqref{kft} to return to $u$ and obtain
\[
\|u\|_{L^\infty_t L^2_x} + \|\rho u\|_{\tX \cap L^{p_1}_t L^{q_1}_x}
\lesssim \|u(0)\|_{L^2} + 
\| f\|_{\tX'_e} + \|g\|_{L^{p'_2}_t L^{q'_2}_x}+ \|u\|_{L^2_{t,x}(|x|\le 2^{M+1})},
\]
concluding the proof of the Theorem.

\subsection{Proof of Theorem~\ref{main.est.theorem.res}}

The argument is similar to the one above. The chief difference 
is that we can no longer use the truncations by $\rho$, $\rho_1$,
$\rho_2$ and instead we use the modified truncation operators
such as $T_\rho$.

 Suppose the function $u$ solves
\[
Pu = f +\rho g, \qquad f \in \dX'_e, \quad g \in L^{p'_2}_t L^{q'_2}_x
\]
with initial data 
\[
u(0) = u_0.
\]
We let $K$ be the parametrix of
Theorem~\ref{prop.Tataru.parametrix} and denote
\[
v = u - T_{\rho_1} K \rho g
\]
Then we can write
\[
Pv = f +T_{\rho_2}(T_{\rho_1}( \rho g - P K \rho g)  - 
[P,T_{\rho_1}]K\rho g)
, \qquad v(0) = u(0) - T_{\rho_1}K\rho g(0).
\]
Here we compute the commutator
\[
[A,T_{\rho_1}] w = A \rho_1 (w-w_{\rho_1})  - \rho_1 A (w-w_{\rho_1}) -
(1-\rho)(Aw)_{\rho_1}
= [A,\rho_1] (w -w_{\rho_1}) - (1-\rho)(Aw)_{\rho_1}.
\]
Also we have 
\[
(Aw)_{\rho_1} = c_\rho \int (1-\rho_1)A(w-w_{\rho_1}) dx
 = -c_\rho \int (w-w_{\rho_1}) A\rho_1 dx.
\]
Then using the bounds \eqref{bnoc}, \eqref{kf}, and \eqref{lperror2b}, we obtain
\[
\| v(0)\|_{L^2} + \| Pv\|_{\dX'_{e2}} \lesssim \|u(0)\|_{L^2} + 
\| f\|_{\dX'_e} + \|g\|_{L^{p'_2}_t L^{q'_2}_x}.
\]
By Theorem~\ref{main.ls.theorem} for $v$ we get
\[
\| v\|_{L^\infty_t L^2_x \cap \dX_{e}}
 + \| Pv\|_{\dX'_{e2}} \lesssim \|u(0)\|_{L^2} + 
\| f\|_{\dX'_e} + \|g\|_{L^{p'_2}_t L^{q'_2}_x}+ \|(1-\rho)(v-v_\rho)\|_{L^2_{t,x}}.
\]
We truncate $v$ with $T_\rho$ and compute as above the 
commutator $[P,T_\rho]$.
Then we estimate
\[
\| v\|_{L^\infty_t L^2_x} + \|T_\rho v\|_{\dX}
 + \| P(T_\rho v)\|_{\dX'} \lesssim \|u(0)\|_{L^2} + 
\| f\|_{\dX'_e} + \|g\|_{L^{p'_2}_t L^{q'_2}_x} + \|(1-\rho)(v-v_\rho)\|_{L^2_{t,x}}.
\]
Hence by \eqref{bnoc} and Theorem~\ref{prop.Tataru.xtolp} applied to $T_\rho v$,
we obtain
\[
\|v\|_{L^\infty_t L^2_x} + \|T_\rho v\|_{\dX \cap L^{p_1}_t L^{q_1}_x}
 \lesssim \|u(0)\|_{L^2} + 
\| f\|_{\dX'_e} + \|g\|_{L^{p'_2}_t L^{q'_2}_x}+ \|(1-\rho)(v-v_\rho)\|_{L^2_{t,x}}.
\]
Finally, we use \eqref{kf} to return to $u$ and obtain
\[
\|u\|_{L^\infty_t L^2_x} + \|T_\rho u\|_{\dX \cap L^{p_1}_t L^{q_1}_x}
\lesssim \|u(0)\|_{L^2} + 
\| f\|_{\dX'_e} + \|g\|_{L^{p'_2}_t L^{q'_2}_x}+ \|(1-\rho)(u-u_\rho)\|_{L^2_{t,x}},
\]
concluding the proof of the Theorem.

\bigskip

\end{document}